\newtheorem{theorem}{Theorem}[section]
\newtheorem{proposition}[theorem]{Proposition}
\newtheorem{lemma}[theorem]{Lemma}
\newtheorem{corollary}[theorem]{Corollary}
\theoremstyle{remark}
\newtheorem{remark}[theorem]{Remark}
\newtheorem{notation}[theorem]{Notation}
\newtheorem{algorithm}[theorem]{Algorithm}
\newtheorem{example}[theorem]{Example}
\newtheorem{definition}[theorem]{Definition}
\newcommand{\QP}{P} 
\newcommand{\QQ}{\mathbb{Q}}
\newcommand{\CC}{\mathbb{C}}
\newcommand{\OO}{\mathcal{O}}
\newcommand{\ZZ}{\mathbb{Z}}
\newcommand{\chs}[2]{
\left(\begin{smallmatrix} #1 \\#2 
        \end{smallmatrix} \right)}
\begin{document}
\Large

\title{Singularities of generalized Richardson varieties}

\author{Sara Billey}
\address{University of Washington, Department of Mathematics, Seattle, WA 98195}
\email{billey@math.washington.edu}
\author{Izzet Coskun} 

\date{\today} 

\address{University of Illinois at
  Chicago, Department of Mathematics, Statistics and Computer Science, Chicago, IL 60607}

\email{coskun@math.uic.edu}

\subjclass[2000]{Primary 14M15,  14N35, 32M10}
  
\thanks{During the preparation of this article the first author was
supported by the National Science Foundation (NSF) grant DMS-0800978,
and the second author was partially supported by the NSF grant
DMS-0737581, NSF CAREER grant DMS-0950951535 and an Arthur P. Sloan Foundation
Fellowship.}

\begin{abstract}

Richardson varieties play an important role in intersection theory and
in the geometric interpretation of the Littlewood-Richardson Rule for
flag varieties.  We discuss three natural generalizations of
Richardson varieties which we call projection varieties, intersection
varieties, and rank varieties.  In many ways, these varieties are more
fundamental than Richardson varieties and are more easily amenable to
inductive geometric constructions. In this paper, we study the
singularities of each type of generalization. Like Richardson
varieties, projection varieties are normal with rational
singularities. We also study in detail the singular loci of projection
varieties in Type A Grassmannians.  We use Kleiman's Transversality
Theorem to determine the singular locus of any intersection variety in
terms of the singular loci of Schubert varieties.  This is a
generalization of a criterion for any Richardson variety to be smooth
in terms of the nonvanishing of certain cohomology classes which has
been known by some experts in the field, but we don't believe has been
published previously.
\end{abstract}

\maketitle \tableofcontents

\section{Introduction}\label{s:intro}

A {\em Richardson variety} is the intersection of two Schubert
varieties in general position in a homogeneous variety $G/P$. Their
cohomology classes encode information that plays a significant role in
algebraic geometry, representation theory and combinatorics
\cite{Fulton,Fulton-book,kumar-book,manivel-book}.  In recent years,
the study of the singularities of Richardson varieties has received a
lot of interest. We refer the reader to \cite{Brion:flag} for general
results about the singularities of Richardson varieties and to
\cite{kreiman} for a detailed study of the singularities of Richardson
varieties in Type A Grassmannians.

In this paper, we study three natural generalizations of Richardson
varieties called intersection varieties, projection varieties and rank
varieties.  We extend several of the results of \cite{kreiman}
pertaining to smoothness criteria, singular loci and multiplicities to
these varieties in $G/P$ for arbitrary semi-simple algebraic groups
$G$ and parabolic subgroups $P$. However, it is important to note that
while in \cite{kreiman} the authors work over algebraically closed
fields of arbitrary characteristic, we require the ground field to
have characteristic zero. \smallskip

The first generalization of Richardson varieties that we discuss is
the \textit{intersection varieties}.  These varieties are simply the
intersection of any finite number of general translates of Schubert
varieties and they appear throughout the literature on Schubert
calculus.  We recall how Kleiman's Transversality Theorem
\cite{kleiman} determines the singular locus of any intersection
variety in terms of the singular loci of Schubert varieties. In
Corollary~\ref{richardson}, we characterize the smooth Richardson
varieties in terms of vanishing conditions on certain products of
cohomology classes for Schubert varieties.  As an application, we show
that a Richardson variety in the Grassmannian variety $G(k,n)$ is
smooth if and only if it is a Segre product of Grassmannians (see
Corollary~\ref{segre}).

The second  generalization of Richardson varieties that we discuss is
the \textit{projection varieties}.
Given $G/P$ as above, let $Q\subset G$ be another parabolic subgroup
containing $P$.  Thus, we have the natural projection
\[
\pi_{Q} : G/P \rightarrow G/Q.   
\]
A {\em projection variety} is the image of a Richardson variety
$R(u,v)$ under a projection $\pi_{Q}$ with its reduced induced
structure.  Projection varieties naturally arise in inductive
constructions such as the Bott-Samelson resolutions.  For example,
they are related to the stratifications used by Lusztig, Postnikov and
Rietsch in the theory of total positivity
\cite{Lusztig94,postnikov-2006,rietsch-2006} and Brown-Goodearl-Yakimov \cite{BGY} in Poisson geometry. 
They generalize the (closed) positroid varieties defined by Knutson, Lam and
Speyer in \cite[Section 5.4]{KLS} and
they play a crucial role in the positive geometric
Littlewood-Richardson rule for Type A flag varieties in
\cite{coskun:LR}.  Since the set of projection varieties is closed
under the projection maps among flag varieties, projection varieties
form a more fundamental class of varieties than Richardson varieties.
Our first theorem about the singularities of projection varieties is
the following, generalizing \cite[Cor. 7.9 and Cor. 7.10]{KLS}.

\begin{theorem}\label{main-1}
Let $G$ be a complex simply connected algebraic group and $Q$ be a
parabolic subgroup of $G$. Then all projection varieties in $G/Q$ are
normal and have rational singularities.
\end{theorem}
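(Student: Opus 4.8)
The strategy is to deduce the theorem from the structure theory of ordinary Richardson varieties together with a cohomological analysis of the projection map. Recall first that over a field of characteristic zero every Richardson variety $R=R(u,v)$ in a flag variety $G/P$ is normal and Cohen--Macaulay with rational singularities, and satisfies $H^i(R,\mathcal{O}_R)=0$ for all $i>0$ (see \cite{Brion:flag}). Fix a projection variety $Y=\pi_Q(R(u,v))\subseteq G/Q$, with its reduced induced structure, and let $f\colon R\to Y$ be the proper surjective morphism induced by $\pi_Q$. The theorem reduces to the single assertion that $f$ is cohomologically trivial, namely
\[
Rf_*\mathcal{O}_R=\mathcal{O}_Y,
\]
equivalently $f_*\mathcal{O}_R=\mathcal{O}_Y$ together with $R^if_*\mathcal{O}_R=0$ for all $i>0$. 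Granting this, normality of $Y$ follows from the Stein factorization $R\to Y'\to Y$ of $f$: since $R$ is normal the intermediate variety $Y'$ is normal, and the equality $f_*\mathcal{O}_R=\mathcal{O}_Y$ forces the finite morphism $Y'\to Y$ to be an isomorphism. Furthermore $\mathcal{O}_Y\to Rf_*\mathcal{O}_R$ is then an isomorphism in the derived category, so by the standard fact that for a proper morphism $g\colon X\to Z$ with $Rg_*\mathcal{O}_X=\mathcal{O}_Z$ and $X$ having rational singularities the target $Z$ also has rational singularities (see the work of Koll\'ar and Kov\'acs on rational singularities), $Y$ has rational singularities. One may also reduce to the case $P=B$: for a suitable choice of coset representatives $R(u,v)\subseteq G/P$ is the cohomologically trivial image of a Richardson variety in the full flag variety $G/B$, and successive projections compose.

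The heart of the matter is thus the identity $Rf_*\mathcal{O}_R=\mathcal{O}_Y$, and proving it hinges on understanding the fibers of $f$. The map $\pi_Q\colon G/P\to G/Q$ is a Zariski-locally trivial fibration whose fiber is the partial flag variety $Q/P$, and the plan is to show that for every point $y\in Y$ the scheme-theoretic fiber $f^{-1}(y)=R\cap\pi_Q^{-1}(y)$ is isomorphic to a Richardson variety inside $Q/P$. This is essentially a statement about the Bruhat order: after moving $\pi_Q^{-1}(y)$ into a standard position, one identifies $R\cap\pi_Q^{-1}(y)$ with the intersection of a Schubert variety and an opposite Schubert variety in $Q/P$. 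It would follow that every fiber of $f$ is connected, Cohen--Macaulay, normal with rational singularities, and has $H^{>0}(\mathcal{O})=0$. A natural way to conclude is then in two stages. Over a dense open subset $V\subseteq Y$, generic flatness makes $f$ flat, and cohomology and base change --- using the vanishing $H^{>0}(\text{fiber},\mathcal{O})=0$ and the connectedness of the fibers --- gives $(Rf_*\mathcal{O}_R)|_V=\mathcal{O}_V$. To propagate this to all of $Y$ one passes to a resolution $\widetilde R\to R$, which leaves $Rf_*\mathcal{O}_R$ unchanged because $R$ has rational singularities, and then uses the Grauert--Riemenschneider and Koll\'ar vanishing theorems, available in characteristic zero, to control the higher direct images along the locus where the fiber dimension jumps.

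I expect this last point to be the main obstacle. The morphism $f$ is \emph{not} smooth --- its special fibers are Richardson subvarieties of $Q/P$ of dimension strictly smaller than that of the generic fiber --- so flatness and base change are unavailable over the whole of $Y$, and one must genuinely control $R^if_*\mathcal{O}_R$ and the possible failure of $f_*\mathcal{O}_R=\mathcal{O}_Y$ along the bad locus; this is precisely where resolution of singularities and the characteristic-zero vanishing theorems are used, and it is the reason normality of $Y$ and rationality of its singularities are best established together rather than in sequence. Once $Rf_*\mathcal{O}_R=\mathcal{O}_Y$ is established, the remaining deductions are formal.
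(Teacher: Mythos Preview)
Your overall architecture matches the paper's: reduce everything to the single assertion $Rf_*\mathcal{O}_R=\mathcal{O}_Y$, then read off normality from Stein factorization and rational singularities from Koll\'ar's theorem. The paper does exactly this (Step~3 of its Theorem~\ref{Mori} is your final deduction).

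Where you diverge is in how you propose to obtain $Rf_*\mathcal{O}_R=\mathcal{O}_Y$. The paper never looks at the fibers of $f$. Instead it fixes an ample line bundle $L$ on $G/Q$, sets $M=\pi_Q^*L$, and invokes two facts about Richardson varieties due to Brion \cite{Brion:flag}: (i) $H^i(R,M^{\otimes n})=0$ for all $i>0$ and $n\ge 0$, and (ii) the restriction $H^0(G/P,M^{\otimes n})\to H^0(R,M^{\otimes n})$ is surjective for all $n\ge 0$. For $n\gg 0$ Serre vanishing kills $H^{>0}(Y,R^qf_*\mathcal{O}_R\otimes L^{\otimes n})$, so the Leray spectral sequence for $M^{\otimes n}$ degenerates and $H^0(Y,R^if_*\mathcal{O}_R\otimes L^{\otimes n})\cong H^i(R,M^{\otimes n})=0$; since $R^if_*\mathcal{O}_R\otimes L^{\otimes n}$ is globally generated, it vanishes. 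A short diagram chase using (ii) together with $\pi_{Q*}\mathcal{O}_{G/P}=\mathcal{O}_{G/Q}$ then yields $f_*\mathcal{O}_R=\mathcal{O}_Y$. The point is that this argument needs no information whatsoever about $Y$ or about the fibers of $f$; everything is packaged in the global statements (i) and (ii), which are already available in the literature.

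Your fiberwise route, by contrast, has a genuine gap at exactly the step you flag as the main obstacle. Grauert--Riemenschneider and Koll\'ar vanishing control $R^if_*\omega$, not $R^if_*\mathcal{O}$; passing between the two via duality would require knowing that $Y$ is Cohen--Macaulay, which is part of what you are trying to prove. Even granting that every scheme-theoretic fiber of $f$ is a Richardson variety in $Q/P$ (itself a nontrivial assertion---``moving $\pi_Q^{-1}(y)$ into standard position'' is not available point by point, since no group acts transitively on $Y$ while preserving $R$), the resulting vanishing $H^{>0}(f^{-1}(y),\mathcal{O})=0$ does not imply $R^{>0}f_*\mathcal{O}_R=0$ in the absence of flatness, and semicontinuity runs the wrong direction. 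The paper's ample-line-bundle argument is precisely the device that circumvents this obstruction.
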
   

\smallskip

In fact, we will prove a much more general statement (Theorem
\ref{Mori}) about the restriction of Mori contractions to subvarieties
satisfying certain cohomological properties. Since Richardson
varieties satisfy these cohomological properties, it will follow that
projections of Richardson varieties have rational singularities
proving the theorem.  By \cite[Theorem 3]{kovacs}, this implies that
projection varieties are Cohen-Macaulay.  

\smallskip

To define the third family of varieties related to Richardson
varieties, we specialize to Type A ($G=GL(n)$) Grassmannian projection
varieties. In this case, we consider $G/P$ to be the partial flag
variety $Fl(k_1, \dots, k_m; n)$ consisting of partial flags $$V_1
\subset \cdots \subset V_m,$$ where each $V_i$ is a complex vector
space of dimension $k_i$.  Let $G(k,n)$ be the Grassmannian variety of
$k$-dimensional subspaces in an $n$-dimensional complex vector space
$V$.  We can realize $G(k,n)$ as $G/Q$ where $Q$ is a maximal
parabolic subgroup of $G$.  Let $$\pi: Fl(k_1, \dots, k_m; n)
\longrightarrow G(k_m,n)$$ denote the natural projection morphism
defined by $\pi(V_1, \dots, V_m) = V_m$.  \smallskip

A {\em Grassmannian projection variety} in $G(k_m,n)$ is the image
$\pi (R(u,v))$ of a Richardson variety $R(u,v) \subset Fl(k_1, \dots,
k_m; n)$ with its reduced induced structure.  It is convenient to have
a characterization of Grassmannian projection varieties without
referring to the projection of a particular Richardson variety. We
introduce rank sets and rank varieties to obtain such a
characterization.

Fix an ordered basis $e_1, \dots, e_n$ of $V$.  If $W$ is a vector
space spanned by a consecutive set of basis elements $e_i, e_{i+1},
\dots, e_j$, let $l(W)= i$ and $r(W)= j$. A {\em rank set} $M$ for
$G(k,n)$ is a set of $k$ vector spaces $M=\{ W_1, \dots, W_k \}$,
where each vector space is the span of (non-empty) consecutive
sequences of basis elements and $l(W_i) \not= l(W_j)$ and $r(W_i)
\not= r(W_j)$ for $i \not= j$. Observe that the number of vector
spaces $k$ is equal to the dimension of subspaces parameterized by
$G(k,n)$. Two rank sets $M_1$ and $M_2$ are equivalent if they are
defined with respect to the same ordered basis of $V$ and consist of
the same set of vector spaces.

Given a rank set $M$, we can define an irreducible subvariety $X(M)$
of $G(k,n)$ associated to $M$ as follows.  The \textit{rank variety}
$X(M)$ is the subvariety of $G(k,n)$ defined by the Zariski closure of
the set of $k$-planes in $V$ that have a basis $b_1, \dots, b_k$ such
that $b_i \in W_i$ for $W_i \in M$.  For example, $G(k,n)$ is itself a
rank variety corresponding with the rank set $M=\{W_{1},\ldots , W_{k}
\}$ where each $W_{i}=<e_{i},\dotsc, e_{n-k+i}>$.   
 
In Theorem~\ref{projection=rank}, we prove that $X \subset G(k,n)$ is
a projection variety if and only if $X$ is a rank variety.  In
particular, the Richardson varieties in $G(k,n)$ are rank varieties.
The singular loci of rank varieties or equivalently of Grassmannian
projection varieties can be characterized as follows.

\begin{theorem}\label{main-2}
Let $X$ be a rank variety in $G(k_m,n)$. 
\begin{enumerate}
\item There
exists a partial flag variety $F(k_1, \dots, k_m; n)$ and a Richardson
variety $$R(u,v) \subset F(k_1, \dots, k_m; n)$$ such that
$$\pi|_{R(u,v)}: R(u,v) \longrightarrow X$$ is a birational morphism
onto $X$.
\item The singular locus of $X$ is the set of points $x \in X$
such that either $\pi^{-1}|_{R(u,v)}(x) \in R(u,v)$ is singular or
$\pi^{-1}|_{R(u,v)} (x)$ is positive dimensional: $$X^{sing} = \{ x
\in X \ | \ \dim(\pi^{-1}|_{R(u,v)}(x)) >0 \ \mbox{or} \
\pi^{-1}|_{R(u,v)}(x) \in R(u,v)^{sing}\}.$$
\item  The
singular locus of a projection variety is an explicitly determined
union of projection varieties.
\end{enumerate}
\end{theorem}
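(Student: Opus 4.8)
The plan is to treat the three parts in order, with part (1) supplying the birational model used in the rest. For part (1), given a rank variety $X=X(M)$ with rank set $M=\{W_1,\dots,W_k\}$, I would reorder the $W_i$ so that $r(W_1)<\dots<r(W_k)$ (possible since a rank set has distinct right endpoints) and consider the flag variety $F(1,2,\dots,k;n)$, sending a general point $\langle b_1,\dots,b_k\rangle$ of $X(M)$ with $b_i\in W_i$ to the flag $\langle b_1\rangle\subset\langle b_1,b_2\rangle\subset\cdots\subset\langle b_1,\dots,b_k\rangle$. Refining the proof of Theorem~\ref{projection=rank}, the closure of this image is cut out by Schubert conditions on the $V_j$ relative to the standard flag $E_\bullet=\langle e_1,\dots,e_a\rangle$ (forcing $V_j\subseteq W_1+\dots+W_j$) and to the opposite flag (coming from the left endpoints $l(W_i)$), hence is a Richardson variety $R(u,v)\subset F(1,\dots,k;n)$. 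The map $\pi|_{R(u,v)}$ is birational onto $X$ because a general $V_k\in X(M)$ recovers its flag via $V_j=V_k\cap E_{r(W_j)}$, this intersection being exactly $j$-dimensional for generic choices. One may then pass to a coarser $F(k_1,\dots,k_m;n)$ by discarding forced steps, but $F(1,\dots,k;n)$ already proves the existence statement.

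For part (2), set $f:=\pi|_{R(u,v)}\colon R(u,v)\to X$. By part (1) $f$ is projective and birational; by Theorem~\ref{main-1} $X$ is normal, and $R(u,v)$, being a Richardson variety, is normal. Let $Z:=\{x\in X:\dim f^{-1}(x)>0\}$, closed by upper semicontinuity of fiber dimension. Over $X\setminus Z$ the morphism $f$ is quasi-finite and proper, hence finite; a finite birational morphism onto a normal variety is an isomorphism, so $f$ restricts to an isomorphism $f^{-1}(X\setminus Z)\xrightarrow{\ \sim\ }X\setminus Z$. In particular every fiber over $X\setminus Z$ is a single reduced point, and for such $x$ one has $x\in X^{sing}$ iff $f^{-1}(x)\in R(u,v)^{sing}$. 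Granting the inclusion $Z\subseteq X^{sing}$ (below), this gives $X^{sing}=Z\sqcup\{x\notin Z:\ f^{-1}(x)\in R(u,v)^{sing}\}$, which is precisely the set named in the theorem (the condition ``$f^{-1}(x)\in R(u,v)^{sing}$'' only makes sense off $Z$).

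The main obstacle is the inclusion $Z\subseteq X^{sing}$: a point with a positive-dimensional fiber must be singular. No purely formal argument works — the blow-up of a smooth point of a surface shares every coarse feature here ($f$ projective birational, source and target with rational singularities, $f_*\mathcal{O}=\mathcal{O}$, Fano fibers, $R^{>0}f_*\mathcal{O}=0$) yet contracts a curve to a smooth point — so the concrete geometry of rank varieties is needed. My plan is to identify the fiber $f^{-1}(V_m)=R(u,v)\cap\pi^{-1}(V_m)$ with an intersection of Schubert varieties in the flag variety of subflags of $V_m$, taken with respect to the induced (generally incomplete) flags $E_\bullet\cap V_m$ and $E^\bullet\cap V_m$; this exhibits $Z$ as the explicit locus where those induced flags degenerate, a union of rank subvarieties of $X$ given by Schubert rank conditions on $V_m$. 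I would then work in an explicit affine chart of $X$ around such a point — rank varieties admit determinantal-type coordinate charts, in the spirit of Kreiman~\cite{kreiman} — and compute the Zariski tangent space directly, showing $\dim T_xX>\dim X$ exactly when the fiber jumps. Matching this tangent-space excess to the fiber dimension is the technical heart.

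For part (3), I would assemble $X^{sing}$ from the two pieces above. The locus $Z$ is, by the description just given, a finite union of rank varieties, hence of projection varieties (Theorem~\ref{projection=rank}). The remaining contribution is the closure of $f\big(R(u,v)^{sing}\cap f^{-1}(X\setminus Z)\big)$; since $R(u,v)^{sing}$ is a finite union of smaller Richardson varieties $R'\subseteq R(u,v)\subset F(k_1,\dots,k_m;n)$, and each such $R'$ is itself a Richardson variety, its image $\pi(R')$ is by definition a projection variety, i.e. a rank variety. Because $f$ is an isomorphism over $X\setminus Z$ and the missing boundary of $f(R')$ is swept into $Z$, one gets $X^{sing}\subseteq Z\cup\bigcup\pi(R')$; conversely each listed piece is generically in $X^{sing}$ (it either lies in $Z$ or its generic point is the isomorphic image of a singular point of $R(u,v)$), giving equality. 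Tracing the rank sets of $Z$ and of the $\pi(R')$ back through the combinatorics of $M$ then yields the promised explicit union of projection varieties.
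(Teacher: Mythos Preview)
Your construction in Part (1) does produce a Richardson variety mapping birationally to $X(M)$, but it is \emph{not} the Richardson variety for which Part (2) holds, and this is a genuine gap. Take $M=\{[e_1,e_3],[e_2,e_4]\}$ in $G(2,4)$: there are no containments, so $X(M)=G(2,4)$ is smooth. Your ordering by right endpoints puts $W_1'=[e_1,e_3]$, $W_2'=[e_2,e_4]$ and lands you in $F(1,2;4)$ with $R'=\{(V_1,V_2):V_1\subset F_3,\ \dim(V_2\cap G_3)\ge 1\}$. The fiber of $\pi$ over $V_2$ is $\{V_1\subset V_2\cap F_3\}$, which is a $\mathbb{P}^1$ whenever $V_2\subset F_3$. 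Thus $Z=\{V_2\subset F_3\}\cong G(2,3)$ is nonempty while $X^{sing}=\emptyset$, so the identity in Part (2) is \emph{false} for your $R(u,v)$. The exceptional locus of your map has codimension $1$ in $R'$, and your proposed tangent-space computation would have to fail here---so it cannot succeed in general without first pinning down the correct resolution.

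The paper's route is quite different and bypasses your ``technical heart'' entirely. It constructs a \emph{minimal} Richardson variety via a coloring of $M$ by longest containment chains (so the ambient flag variety is $F(k_1,\dots,k_m;n)$ with $m$ equal to the length of the longest chain, not $F(1,\dots,k;n)$); in the example above $m=1$ and the minimal $R(u,v)$ is $G(2,4)$ itself. The key geometric fact, proved by a direct dimension count on rank sets, is that for this minimal model the exceptional locus has codimension at least $2$. With that in hand, $Z\subseteq X^{sing}$ follows from a general lemma: if $f:X\to Y$ is birational between normal projective varieties with exceptional locus $E$ of codimension $\ge 2$, then $Y$ is not even $\mathbb{Q}$-factorial along $f(E)$ (push forward an ample divisor and compare intersection numbers with a contracted curve), hence $f(E)\subseteq Y^{sing}$. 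Your blow-up counterexample has codimension-$1$ exceptional locus, so it does not obstruct this argument; the claim that ``no purely formal argument works'' is therefore incorrect---the formal argument works once you have the right $R(u,v)$, and the concrete geometry of rank varieties is used only to establish the codimension-$2$ bound. Your Part (3) outline is fine in spirit but inherits the same defect, since it rests on your Part (2).
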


In Lemma~\ref{dim}, we give a simple formula for the dimension of a
rank variety.  In Corollary~\ref{c:stirling}, we relate the
enumeration of rank varieties by dimension to a $q$-analog of the
Stirling numbers of the second kind.

\smallskip

The organization of this paper is as follows. In
Section~\ref{s:intersection.varieites}, we will introduce our notation
and study the singular loci of Richardson varieties and other
intersection varieties in homogeneous varieties. We recall Kleiman's
Transversality Theorem.  This allows us to completely characterize the
singular loci of intersection varieties in terms of the singular loci
of Schubert varieties in Proposition~\ref{intersection}.  An example
in the Grassmannian $G(3,8)$ is given showing that Richardson
varieties can be singular at every $T$-fixed point.  As corollaries,
we discuss some special properties of intersection varieties in
Grassmannians.  In Section~\ref{s:main-1}, we will prove a general
theorem about the singularities of the image of a subvariety
satisfying certain cohomological properties under a Mori
contraction. This will immediately imply Theorem~\ref{main-1}. In
Section~\ref{s:grassmannian.projetion.varieties}, we will undertake a
detailed analysis of the singularities of Grassmannian projection
varieties and rank varieties.  In particular, the proof of
Theorem~\ref{main-2} follows directly from Corollary~\ref{important}.

\smallskip

\noindent {\bf Acknowledgments:} We would like to thank Dave Anderson,
Lawrence Ein, S\'{a}ndor Kov\'{a}cs, Max Lieblich, Stephen Mitchell, and Lauren Williams for
helpful and stimulating discussions. We are grateful to the American
Mathematics Institute where this project was started, for providing a
stimulating work environment, and to the organizers of the
Localization Techniques in Equivariant Cohomology Workshop, namely
William Fulton, Rebecca Goldin, and Julianna Tymoczko.

\section{The singularities of intersection varieties and Richardson varieties}\label{s:intersection.varieites}

In this section, we review the necessary notation and background for
this article.  In particular, we recall Kleiman's Transversality
Theorem and review its application to the singular loci of Richardson
varieties and intersection varieties.  For the convenience of the reader, we included the proofs of results such as Theorem \ref{thm:kleiman.variation} and Corollary \ref{richardson} when our formulation differed from what is commonly available in the literature.  For further background, we
recommend
\cite{Brion:flag,fulton-harris,GL-book,harris,Hum-LAG,wiki:flags}.

Given a projective variety $X$ and a point $p \in X$, let $T_p X$
denote the Zariski tangent space to $X$ at $p$. Then, $p$ is a
\textit{singular point} of $X$ if $\dim T_{p} X > \dim X$, and $p$ is
\textit{smooth} if $\dim T_{p} X = \dim X$.  Let $X^{sing}$ denote the
set of all singular points in $X$.

Let $G$ denote a simply connected, semi-simple algebraic group over
the complex numbers $\CC$. Fix a maximal torus $T$ and a Borel
subgroup $B$ containing $T$. Let $\QP$ denote a parabolic subgroup of
$G$ containing $B$. Let $W=N(T)/T$ denote the Weyl group of $G$, and
let $W_{\QP}$ denote the Weyl group of $\QP$.  We will abuse notation
by considering any element $u \in W$ to also represent a choice of
element in the coset $uT \subset G$.  In particular, we consider $W
\subset G$ via this choice.  Let $e_{u}=u\QP$, since $T \subset P$
this point is well defined in $G/P$.  The points $\{e_u : u\in
W/W_{\QP}\}$ are the $T$-fixed points in $G/\QP$.

For an element $u \in W/W_{\QP}$, the {\em Schubert variety} $X_u$ is
the Zariski closure of the $B$-orbit of $e_u = u\QP$ in $G/\QP $.
Thus, $X_{u}$ is the union of $B$-orbits $Be_{t}$ for $t\leq u$ in the
Bruhat order on $W/W_{\QP}$.  Since we are working over $\mathbb{C}$,
the (complex) dimension of $X_{u}$ is  the length of $u$ as an element of
$W/W_{\QP}$.

The singular locus of $X_{u}$ is also a $B$-stable subvariety of
$G/\QP$, hence it is a union of Schubert varieties. The typical way of
studying the singularities of Schubert varieties is in terms of the
$T$-fixed points.  In particular, $p \in X_{u}$ is a smooth point if
and only if there exists a $t\in W/W_{\QP}$ such that $p \in Be_{t}$ and
$e_{t}$ is a smooth point of $X_{u}$.  There are many effective tools
for determining if $e_{t}$ is a smooth point in $X_{u}$ and exactly
which elements of $W/W_{\QP}$ index the Schubert varieties which form
the irreducible components of the singular locus of $X_{u}$, see
\cite{BLak,BW-sing,CK,cortez,klr,kumar,manivel,polo94}.

Let  $w_{0}$ be the unique longest element in
$W$. For $v \in W/W_{\QP}$, define the {\em
opposite Schubert variety}, denoted $X^{v}$, as the Schubert variety $w_0 X_v$. We caution the reader that some authors use $X^v$  to denote the Schubert variety in the Poincar\'{e} dual class. The {\em Richardson variety} $R(u,v) \in G/\QP $ is defined as the
intersection of the two Schubert varieties $X_u$ and $X^v$.  $R(u,v)$
is empty unless $u\geq w_0 v$ in the Bruhat order, in which case the dimension
of $R(u,v)$ is $l(u)-l(w_0 v)$.

For much of the discussion, there is no reason to restrict to
Richardson varieties. Intersection varieties provide a more natural
set of varieties to consider.  Let $gX_{u}$ denote the translate of
the Schubert variety $X_{u}$ under the action of $g \in G$ by left
multiplication on $G/\QP$.  
\smallskip

\begin{definition}
Let $u_1, \dots, u_r \in W/W_{\QP}$, and let $g_{\bullet} = (g_1,
\dots, g_r)$ be a general $r$-tuple of elements in $G^r$.  The {\em
intersection variety} $R(u_1, \dots, u_r; g_{\bullet})$ is defined as
the intersection of the translated Schubert varieties $g_{i}X_{u_i}$ in $G/\QP$:
$$R(u_1, \dots, u_r; g_{\bullet})= g_{1}X_{u_1} \cap \cdots \cap
g_{r}X_{u_r}.$$
\end{definition}
\smallskip

\begin{remark}\label{dense}
Richardson varieties are the special case of intersection varieties
when $r=2$.  Let $B^{-}= w_0 B w_0$ be the opposite Borel subgroup of
$G$ defined by the property that $B \cap B^{-} = T$.  The $G$-orbit of
$(B, B^-)$ is a dense open orbit under the action of $G$ on $G/B
\times G/B^-$.  Consequently, for any choice of a pair $(g_{1},g_{2})$
in this orbit, the intersection of two translated Schubert varieties
defined with respect to the pair is isomorphic.  Hence, for $r=2$, the
generality condition simply means that $(g_{1},g_{2})$ should belong
to the dense open orbit.  In particular, $R(u_{1},u_{2}; g_{\bullet})$
is isomorphic to $R(u_{1},u_{2})$ for general
$g_{\bullet}=(g_{1},g_{2})$.
\end{remark}
\smallskip

\begin{remark}
When $r>2$, it is hard to characterize the $g_{\bullet}$ that are
sufficiently general.  Two general intersection varieties $R(u_1,
\dots, u_r; g_{\bullet})$ and $R(u_1, \dots, u_r; g_{\bullet}')$ are
not necessarily isomorphic or even birational to each other. For
example, let $u \in W/W_P$ be the element indexing the divisor class
for the Grassmannian $G(2,5)$. Then $R(u,u,u,u,u; g_{\bullet})$ is an elliptic curve.
 As $g_{\bullet}$ varies, all $j$-invariants occur in this family of
elliptic curves. Since two elliptic curves with different
$j$-invariants are not birational to each other, we get examples of
intersection varieties $R(u,u,u,u,u; g_{\bullet})$ and $R(u,u,u,u,u;
g_{\bullet}')$ in $G(2,5)$ that are not isomorphic (or even
birational) to each other, see \cite{coskun:DP}.
\end{remark}
\smallskip


It is well-known that Richardson varieties are reduced and irreducible
\cite{richardson}.  However, for $r>2$ the intersection varieties may
be reducible. For example, if $\sum l(u_i) = \dim(G/Q)$, then the
intersection variety $R(u_1, \dots, u_r, g_{\bullet})$ consists of
finitely many points, where the number of points is given by the
intersection number $\prod_{i=1}^r [X_{u_i}]$ of the Schubert classes. Nevertheless, every connected component of an intersection variety is normal and has rational singularities \cite[Lemma 4.1.2]{Brion:flag}.

Kleiman's Transversality Theorem \cite{kleiman} is the key tool for
characterizing the singular loci of intersection varieties.  We recall
the original statement of the theorem for the reader's convenience.
\smallskip

\begin{theorem}[Kleiman, \cite{kleiman}] Let $X$ be an integral
algebraic scheme with a transitive action of the integral algebraic
group $G$. Let $f: Y \rightarrow X$ and $g: Z \rightarrow X$ be two
maps of integral algebraic schemes. For any point $s \in G$, let $sY$
denote the $X$-scheme given by the map $y \mapsto s f(y)$.

\begin{enumerate}
\item [(i)] There exists a dense Zariski open subset $U$ of $G$ such
that for $s \in U$, the fibered product $sY \times_X Z$ is either
empty or equi-dimensional of dimension $\dim(Y) + \dim(Z) - \dim(X)$.
\item [(ii)]Assume the characteristic of the ground field is zero.
If $Y$ and $Z$ are smooth, then there exists a dense open subset $V$
of $G$ such that for $s \in V$, the fibered product $(sY) \times_X Z$
is smooth.
\end{enumerate}
\end{theorem}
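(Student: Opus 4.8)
The plan is to handle all translates at once by forming a single incidence variety and then extracting both statements from two projections. Let $m\colon G\times Y\to X$ be the morphism $m(s,y)=sf(y)$, and set
\[
\Gamma \;=\; (G\times Y)\times_{X}Z \;=\;\{(s,y,z)\in G\times Y\times Z : sf(y)=g(z)\},
\]
with projections $p\colon\Gamma\to G$, $(s,y,z)\mapsto s$, and $q\colon\Gamma\to Y\times Z$, $(s,y,z)\mapsto(y,z)$. The point is that $p^{-1}(s)$ is precisely the $X$-scheme $sY\times_{X}Z$, so both (i) and (ii) reduce to understanding the general fiber of $p$.

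First I would study $q$. Transitivity of the $G$-action makes each fiber $q^{-1}(y,z)=\{s\in G:sf(y)=g(z)\}$ a nonempty coset of the stabilizer $G_{f(y)}$, hence of pure dimension $\dim G-\dim X$; in particular $q$ is surjective. To turn this into equidimensionality of $\Gamma$ I would note that $m$ is flat: it is equivariant for $G$ acting by left translation on the first factor and by the given action on $X$, so generic flatness together with homogeneity (the $G$-translates of the flat locus cover $X$) force $m$ to be flat everywhere. Then $\Gamma\to Z$ is flat with all fibers of dimension $\dim(G\times Y)-\dim X$, so $\Gamma$ is equidimensional of dimension $\dim G+\dim Y+\dim Z-\dim X$. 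Applying the theorem on dimensions of fibers to $p$ would then yield a dense open $U\subseteq G$ over which every nonempty fiber of $p$ has pure dimension $\dim\Gamma-\dim G=\dim Y+\dim Z-\dim X$ (and if $p$ is not dominant one takes instead $U=G\setminus\overline{p(\Gamma)}$, over which every fiber is empty). This would give (i).

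For (ii), now in characteristic zero with $Y$ and $Z$ smooth: here $X\cong G/G_x$ is smooth, each stabilizer $G_x$ is smooth by Cartier's theorem, and so each orbit map $G\to X$ is smooth with surjective differential. Inspecting $dm$ at a point $(s,y)$, its restriction to the $T_sG$ summand is the differential of $s'\mapsto s'f(y)$ and is already surjective onto $T_{sf(y)}X$; hence $dm$ is everywhere surjective, and a morphism of smooth varieties with everywhere-surjective differential is a smooth morphism. Then $\Gamma=(G\times Y)\times_{X}Z\to Z$ is smooth as a base change, so $\Gamma$ is smooth, and generic smoothness (the algebraic Sard theorem, which requires characteristic zero) applied to $p\colon\Gamma\to G$ would produce a dense open $V\subseteq G$ over which $p$ is smooth, hence over which $sY\times_{X}Z$ is smooth. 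This would give (ii).

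I expect the load-bearing steps to be the flatness of $m$ for (i) and, for (ii), the smoothness of the point stabilizers: the latter is exactly where the characteristic-zero hypothesis is indispensable, since in positive characteristic the orbit maps, and therefore $m$, can fail to be smooth. The only other thing needing care is the passage from equidimensionality of $\Gamma$ to equidimensionality of the general fiber of $p$, i.e.\ keeping track of the irreducible components of $\Gamma$ and their images in $G$, which the flatness of $m$ makes routine.
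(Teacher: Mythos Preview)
The paper does not supply its own proof of this statement; it is quoted verbatim from Kleiman's article \cite{kleiman} and used as a black box (the authors immediately pass to their variant, Theorem~\ref{thm:kleiman.variation}). So there is no ``paper's own proof'' to compare against.

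That said, your sketch is essentially Kleiman's original argument and is correct. A couple of places deserve a bit more care when you write it up. First, for part~(i) you want not merely that the general fiber of $p$ has the expected dimension but that it is \emph{equidimensional}; the cleanest way to get this is to apply generic flatness to $p$ (component by component on $\Gamma$), so that over a dense open $U\subseteq G$ the map $p$ is flat and its fibers automatically have pure dimension $\dim\Gamma-\dim G$. You flag this at the end, which is fine. Second, in part~(ii) you implicitly use that $G$ itself is smooth so that $G\times Y$ is a smooth variety before invoking ``surjective differential $\Rightarrow$ smooth''; in characteristic zero this is again Cartier's theorem, so it is worth saying so explicitly alongside the smoothness of the stabilizers.
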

\smallskip

\begin{remark}
The proof of part (ii) in Kleiman's Theorem uses generic
smoothness. This is the main reason why we are working over $\CC$.
Kleiman gives a specific example where (ii) fails if the ground field
has positive characteristic and $X$ is a Grassmannian variety
\cite[9.Example]{kleiman}.
\end{remark}
\smallskip

There exist many variations of Kleiman's Theorem in the original paper
and in the literature.  Below we spell out the variation we need for
Richardson varieties and intersection varieties in general.  This
variation is similar to the statement in \cite[Theorem 17.22]{harris}.

\begin{theorem}\label{thm:kleiman.variation}
Let $G$ be an algebraic group acting transitively on a smooth
projective variety $X$.  Let $Y$ and $Z$ be two subvarieties of $X$.
Then for any general translate $gY$ of $Y$, we have
\[
(gY\cap Z)^{sing} = ((gY)^{sing} \cap Z) \cup ((gY) \cap Z^{sing}).
\]
\end{theorem}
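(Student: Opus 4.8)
The plan is to deduce this from Kleiman's Theorem by a stratification argument together with a local analysis of transversality. First I would stratify each of $Y$ and $Z$ into its smooth locus and singular locus, and then further stratify $Y^{sing}$ and $Z^{sing}$ into smooth locally closed subvarieties; since $X$ is smooth and $G$ acts transitively, I can apply Kleiman's Theorem (part (ii), using $\mathrm{char} = 0$) to each pair of strata. This gives a single dense open $U \subseteq G$ so that for $g \in U$, simultaneously: every translated smooth stratum of $gY$ meets every smooth stratum of $Z$ transversally (hence in a smooth variety of the expected dimension), and the dimension count in part (i) holds for all pairs of strata. Intersecting the finitely many dense opens arising from the finitely many pairs of strata yields the desired generic $g$.

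Next I would prove the two inclusions. For ``$\supseteq$'': if $p \in (gY)^{sing} \cap Z$, pick the stratum $S$ of $(gY)^{sing}$ containing $p$ and the stratum $S'$ of $Z$ (smooth or singular) containing $p$. Transversality of $gY$'s relevant strata with $Z$'s strata forces the local dimension of $gY \cap Z$ at $p$ to be computed from $\dim S + \dim S' - \dim X < \dim(gY) + \dim Z - \dim X = \dim(gY \cap Z)$ (using that $gY \cap Z$ is pure-dimensional of the expected dimension, which is itself part of what Kleiman gives on the smooth loci, plus the fact that by genericity $gY \cap Z$ is nonempty iff the expected dimension is nonnegative and its top-dimensional part lives on $(gY)^{sm} \cap Z^{sm}$). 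So $p$ lies on a component of dimension strictly smaller than $\dim(gY \cap Z)$, or more precisely $\dim T_p(gY\cap Z)$ exceeds $\dim(gY \cap Z)$, making $p$ singular. The symmetric argument handles $p \in (gY) \cap Z^{sing}$.

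For ``$\subseteq$'': suppose $p \in gY \cap Z$ with $p \notin (gY)^{sing}$ and $p \notin Z^{sing}$, i.e.\ $p$ is a smooth point of both $gY$ and $Z$. Then near $p$, both $gY$ and $Z$ are smooth subvarieties of the smooth variety $X$, and by the transversality obtained from Kleiman's Theorem applied to the top strata $(gY)^{sm}$ and $Z^{sm}$, the tangent spaces satisfy $T_p(gY) + T_p(Z) = T_p X$. A standard local computation then shows $gY \cap Z$ is smooth at $p$ of dimension $\dim(gY) + \dim Z - \dim X$, so $p \notin (gY \cap Z)^{sing}$. Combining the two inclusions gives the equality.

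The main obstacle I anticipate is making the ``$\supseteq$'' direction fully rigorous: one must rule out the possibility that a point $p$ on a low-dimensional stratum of the intersection is nonetheless a smooth point of $gY \cap Z$ lying on a higher-dimensional component, i.e.\ one needs to know that $gY\cap Z$ is pure of the expected dimension (so that any point on a ``small'' stratum is genuinely singular) and that the singular locus of an intersection of two smooth varieties meeting transversally along a stratum is controlled by that stratum. This requires carefully combining part (i) of Kleiman's Theorem (equidimensionality of each strata intersection) with the purity of $gY \cap Z$ itself — which holds because $gY\cap Z$ is a connected-component-wise normal, hence equidimensional, intersection variety, or alternatively follows again from Kleiman applied to the ambient pair $(Y,Z)$ — and then doing the local tangent-space bookkeeping at $p$. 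The rest is routine transversality linear algebra.
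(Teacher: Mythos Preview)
Your ``$\subseteq$'' direction matches the paper exactly: apply part (ii) of Kleiman's Theorem to the smooth loci $(gY)^{sm}$ and $Z^{sm}$ to conclude their intersection is smooth for general $g$, so any singular point of $gY \cap Z$ must lie in $(gY)^{sing}$ or in $Z^{sing}$.

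For ``$\supseteq$'', however, the stratification is a detour that does not close the gap you yourself flag. Knowing that $p$ lies in a stratum intersection $gS \cap S'$ of dimension strictly less than $\dim(gY \cap Z)$ does \emph{not} by itself force $p$ to be singular in $gY \cap Z$: a smooth point of an irreducible variety can perfectly well lie on a low-dimensional subvariety. Your appeal to purity or equidimensionality of $gY \cap Z$ does not help either, for the same reason. What is actually needed is a lower bound on $\dim T_p(gY \cap Z)$, and the stratification gives no such bound.

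The paper bypasses the stratification entirely. It uses Kleiman (i) once, applied to the pair $(Y,Z)$, to get $\dim(gY \cap Z) = \dim(gY) + \dim Z - \dim X$ for general $g$, and then invokes the elementary identity $T_p(Y \cap Z) = T_p Y \cap T_p Z$ for subvarieties of a smooth ambient $X$ (proved in one line by observing that the Jacobian matrix of a generating set for $I(Y)+I(Z)$ is the vertical concatenation of the Jacobians for $I(Y)$ and $I(Z)$). From this, if say $p \in Z^{sing}$, then
\[
\dim T_p(gY \cap Z) = \dim\bigl(T_p(gY) \cap T_p Z\bigr) \;\geq\; \dim T_p(gY) + \dim T_p Z - \dim X \;>\; \dim(gY) + \dim Z - \dim X \;=\; \dim(gY \cap Z),
\]
so $p$ is singular in $gY\cap Z$. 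This two-line computation is the ``tangent-space bookkeeping'' you allude to at the end, and it is the entire content of the ``$\supseteq$'' direction; once you have it, stratifying $Y^{sing}$ and $Z^{sing}$ into smooth pieces is unnecessary.
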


\begin{proof}
Since $Y$ and $Z$ are subvarieties of $X$, both map into $X$ by
inclusion.  Furthermore, for any $g\in G$ the fibered product
\[
(gY) \times_X Z = \{(gy,z)\in gY \times Z : gy=z \} = gY \cap Z.
\]
Applying part (ii) of Kleiman's Transversality Theorem to the smooth
loci 
\begin{align*}
(gY)^{sm} &= gY - (gY)^{sing}\\
Z^{sm} &= Z - Z^{sing}
\end{align*}
we conclude
that $(gY)^{sm} \cap Z^{sm}$ is smooth, provided $g$ is sufficiently
general.  Therefore, $(gY\cap Z)^{sing} \subset ((gY)^{sing} \cap Z)
\cup (gY \cap Z^{sing})$.

Conversely, by part (i) of Kleiman's Transversality Theorem, $Z$
intersects a general translate $gY$ of $Y$ properly, so we can assume
\[
dim((gY) \cap Z) = dim(gY) + dim(Z) - dim(X).
\]
We claim that a
proper intersection of two varieties cannot be smooth at a point where
one of the varieties is singular.  This claim is verified by the
following computation.  Let $p \in Z^{sing}$.  Then Lemma
\ref{tangent} below implies that
\begin{align*}
dim(T_{p}(gY \cap Z)) & = dim(T_{p}(gY) \cap T_{p}(Z))\\
& \geq dim(T_{p}(gY)) + dim(T_{p}(Z)) - dim(X) \hspace{.2in} \text{(since $X$ is smooth)}\\
& > dim(gY) + dim(Z) - dim(X) \hspace{.8in} \text{(since $p\in Z^{sing}$)}\\
& = dim(gY \cap Z).
\end{align*}
Therefore, $p\in (gY\cap Z)^{sing}$.  Similarly, $p \in (gY)^{sing}$
implies $p \in (gY\cap Z)^{sing}$.  This proves $((gY)^{sing} \cap Z)
\cup (gY \cap Z^{sing}) \subset (gY\cap Z)^{sing}$. Hence, we
conclude
$$(gY\cap Z)^{sing} = ((gY)^{sing} \cap Z) \cup ((gY) \cap Z^{sing}).$$
\end{proof}
\smallskip

\begin{lemma}\label{tangent}
Let $Y$ and $Z$ be two subvarieties of a smooth projective variety $X$
and let $p \in Y \cap Z$. Then $T_p (Y \cap Z) = T_p Y \cap T_p Z$ in
$T_pX$.
\end{lemma}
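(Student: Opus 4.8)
\textbf{Proof proposal for Lemma~\ref{tangent}.}

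The plan is to reduce the statement to a purely local, affine computation by passing to an affine chart around $p$, since the Zariski tangent space and the scheme-theoretic intersection are both local notions. First I would choose an affine open neighborhood $U$ of $p$ in $X$; since $X$ is smooth at $p$, after shrinking $U$ I may assume $U$ is smooth, or even (étale-locally, which suffices for computing tangent spaces) that $U$ is an affine space $\AA^N$ with $p$ the origin. Write $A = \OO_{X,p}$ for the local ring and $\mathfrak{m}$ for its maximal ideal; let $I$ and $J$ be the ideals in $A$ cutting out $Y$ and $Z$ respectively near $p$. The scheme-theoretic intersection $Y \cap Z$ is then defined by $I + J$, and by definition the Zariski tangent space of a scheme with local ring $A/\mathfrak{a}$ at its closed point is the dual of $\mathfrak{n}/\mathfrak{n}^2$ where $\mathfrak{n}$ is the maximal ideal of $A/\mathfrak{a}$; equivalently, it is the subspace of $T_pX = (\mathfrak{m}/\mathfrak{m}^2)^*$ annihilated by the image of $\mathfrak{a}$ in $\mathfrak{m}/\mathfrak{m}^2$.

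With this dictionary the claim becomes the elementary linear-algebra identity that the image of $I + J$ in $\mathfrak{m}/\mathfrak{m}^2$ is the sum of the image of $I$ and the image of $J$ — which is immediate since the map $\mathfrak{a} \mapsto (\mathfrak{a} + \mathfrak{m}^2)/\mathfrak{m}^2$ is additive — and therefore the annihilator of $\overline{I+J}$ equals the intersection of the annihilators of $\overline{I}$ and $\overline{J}$. Spelling this out: $v \in T_p(Y\cap Z)$ iff $v$ kills every element of $I+J$ in $\mathfrak{m}/\mathfrak{m}^2$, iff $v$ kills every element of $I$ and every element of $J$, iff $v \in T_pY$ and $v \in T_pZ$. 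This gives $T_p(Y \cap Z) = T_pY \cap T_pZ$ as subspaces of $T_pX$. One should remark that $Y \cap Z$ here is taken with its reduced induced structure (as is implicit throughout the paper), but this does not affect the tangent space computation: the reduction map only shrinks the defining ideal's image in $\mathfrak{m}/\mathfrak{m}^2$ in a way that is still contained in $\overline I + \overline J$, and conversely $\overline I + \overline J$ already annihilates $T_pY \cap T_pZ$, so the reduced and scheme-theoretic intersections have the same tangent space at $p$; alternatively, since we only use the inclusion $T_p(Y\cap Z) \subseteq T_pY \cap T_pZ$ for the upper bound and the reverse for the lower bound in Theorem~\ref{thm:kleiman.variation}, one can check each direction separately and the subtlety disappears.

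The only genuine point requiring care — the ``main obstacle,'' though it is minor — is making sure the embedding of $Y\cap Z$ into $X$ induces the expected map on tangent spaces, i.e. that $T_p(Y\cap Z) \hookrightarrow T_pX$ is injective with image exactly the common annihilator. This is standard: a closed immersion $W \hookrightarrow X$ of $p$ induces a surjection $\mathfrak{m}_{X,p}/\mathfrak{m}_{X,p}^2 \twoheadrightarrow \mathfrak{m}_{W,p}/\mathfrak{m}_{W,p}^2$ with kernel the image of the defining ideal, hence dually an injection on tangent spaces whose image is that annihilator. I would cite this as a standard fact (e.g.\ from Hartshorne or Eisenbud) rather than reprove it. No hypothesis beyond smoothness of $X$ at $p$ (used only to identify $T_pX$ with a vector space of the expected dimension and to make the local chart clean) is needed, and in fact the inclusion $T_p(Y\cap Z) = T_pY \cap T_pZ$ holds for any ambient scheme; the smoothness of $X$ is what makes the companion dimension count in Theorem~\ref{thm:kleiman.variation} work, not this lemma.
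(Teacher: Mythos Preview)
Your argument is correct and follows essentially the same approach as the paper: both reduce to an affine/local situation, use that the intersection is cut out by $I+J$, and then observe that the tangent space of a closed subscheme is the annihilator of the image of its ideal in $\mathfrak{m}/\mathfrak{m}^2$ (the paper phrases this last step concretely via Jacobian matrices of generators, whose kernel for the stacked matrix is visibly the intersection of kernels). One small slip in your parenthetical on reduced structure: passing to the radical \emph{enlarges} the ideal, hence can only enlarge its image in $\mathfrak{m}/\mathfrak{m}^2$ and shrink the tangent space, not the other way around; the paper simply writes $I(Y\cap Z)=I(Y)+I(Z)$ without comment, and for the intended application (general translates in characteristic zero) the intersection is already reduced, so neither proof is harmed.
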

\smallskip

\begin{proof}
Since this is a local question, we may assume that $X$ is affine space
and $Y$ and $Z$ are affine varieties. Let $I(Y)$ and $I(Z)$ denote the
ideals of $Y$ and $Z$, respectively. Then $I(Y \cap Z) = I(Y) +
I(Z)$. Let $f_1, \dots, f_n$ be generators of $I(Y)$ and $g_1, \dots,
g_m$ be generators of $I(Z)$. Then $f_1, \dots, f_n, g_1, \dots, g_m$
generate $I(Y) + I(Z)$. The Zariski tangent spaces $T_p(Y)$, $T_p(Z)$
and $T_p(Y \cap Z)$ are the kernels of the matrices $$M= \left(
\frac{\partial f_i}{\partial t_j}(p) \right), N= \left( \frac{\partial
g_i}{\partial t_j}(p) \right), L= \left( \begin{array}{c} M \\ N
\end{array} \right),$$ respectively, where $t_j$ denote the
coordinates on affine space. It is now clear that the kernel of $L$ is
the intersection of the kernels of $M$ and $N$. 
\end{proof}
\smallskip

In the next proposition, we specialize
Theorem~\ref{thm:kleiman.variation}, to $X=G/\QP$.  Given a variety
$Y$ in $G/\QP$, let $[Y]$ denote the cohomology class of $Y$ in
$H^*(G/\QP, \ZZ)$.

\begin{proposition}\label{intersection}
The singular locus of the intersection variety $R(u_1, \dots, u_r;
g_{\bullet})$ is 
\[
\bigcup_{i=1}^r \bigg( g_{i}X_{u_i}^{sing} \cap R(u_1, \dots, u_r;
g_{\bullet}) \bigg)
=
\bigcup_{i=1}^r \left( g_{i}X_{u_i}^{sing} \cap \bigcap_{j=1}^r
g_{j}X_{u_j}\right).
\]
Hence, the singular locus of an intersection variety is a
union of intersection varieties. Furthermore, $R(u_1, \dots, u_r; g_{\bullet})$ is
smooth if and only if $$[X_{u_i}^{sing}] \cdot \prod_{j\not= i}
[X_{u_j}] = 0$$ for every $1 \leq i \leq r$.
\end{proposition}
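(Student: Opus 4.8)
The plan is to iterate Theorem~\ref{thm:kleiman.variation} and then translate the resulting geometric description of the singular locus into a cohomological vanishing condition. First I would apply Theorem~\ref{thm:kleiman.variation} repeatedly to the intersection $g_1 X_{u_1} \cap \cdots \cap g_r X_{u_r}$: grouping $Y = g_1 X_{u_1}$ and $Z = g_2 X_{u_2} \cap \cdots \cap g_r X_{u_r}$, and noting that for general $g_{\bullet}$ the smaller intersection is again a general intersection variety (so one can induct on $r$), one obtains
\[
R(u_1, \dots, u_r; g_{\bullet})^{sing} = \bigcup_{i=1}^r \left( g_i X_{u_i}^{sing} \cap \bigcap_{j=1}^r g_j X_{u_j} \right),
\]
since the "transverse to each other" contributions vanish by part (ii) of Kleiman, leaving only the loci where one of the factors is itself singular. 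A small point to be careful about here is that $X_{u_i}^{sing}$ is a proper closed subvariety (a union of Schubert varieties), and I should intersect that singular locus with the remaining translates, also in general position, so that Kleiman applies at each stage; the "general $g_{\bullet}$" is chosen once to work for all finitely many sub-intersections simultaneously.

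Next I would convert this into the smoothness criterion. The variety $R(u_1,\dots,u_r;g_{\bullet})$ is smooth precisely when the displayed union is empty, i.e. when each $g_i X_{u_i}^{sing} \cap \bigcap_{j} g_j X_{u_j}$ is empty. For fixed $i$, this is a general intersection (over the $g_j$, $j \neq i$, and a general translate of $X_{u_i}^{sing}$) of the varieties $X_{u_i}^{sing}$ and $X_{u_j}$ for $j \neq i$. By Kleiman's Transversality Theorem part (i), a general such intersection is nonempty if and only if its expected dimension is nonnegative and, more precisely, the standard fact is that a general translate intersection of subvarieties of a homogeneous space $G/P$ is nonempty if and only if the product of their cohomology classes is nonzero in $H^*(G/P,\mathbb{Z})$. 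Concretely: the class of the intersection, when the intersection is proper, is the cup product of the classes; if the cup product is nonzero the intersection is nonempty (its class is a nonzero effective class), and conversely if the intersection is empty the product must vanish. Applying this with the classes $[X_{u_i}^{sing}]$ and $[X_{u_j}]$ for $j \neq i$ gives that $g_i X_{u_i}^{sing} \cap \bigcap_{j} g_j X_{u_j} = \emptyset$ for general $g_{\bullet}$ if and only if $[X_{u_i}^{sing}] \cdot \prod_{j \neq i} [X_{u_j}] = 0$. Taking the conjunction over $i = 1, \dots, r$ yields the stated criterion.

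I should record why "nonzero cup product $\Rightarrow$ nonempty general intersection": this is exactly the content of Kleiman's theorem combined with the fact that $H^*(G/P)$ has a basis of Schubert classes which are effective, so that the top nonzero graded piece of a product of effective classes is represented by a nonempty subvariety; equivalently, moving everything into general position by the group action makes the intersection proper and of the expected dimension $\dim X - \sum \mathrm{codim}$, and this dimension is $\geq 0$ with the intersection nonempty exactly when the product class is nonzero. Since $X_{u_i}^{sing}$ is $B$-stable, $[X_{u_i}^{sing}]$ is a (nonnegative) sum of Schubert classes, and the same reasoning applies verbatim.

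The main obstacle, and the step deserving the most care, is the induction in the first paragraph: Theorem~\ref{thm:kleiman.variation} as stated is about a single general translate $gY$ meeting a fixed $Z$, so to iterate it I must verify that when I peel off $g_1 X_{u_1}$, the residual intersection $\bigcap_{j \geq 2} g_j X_{u_j}$ is a variety (or a finite union of varieties each handled separately) that is itself in sufficiently general position — i.e. that a single choice of $(g_1, \dots, g_r)$ from a dense open set makes every nested application valid, including the applications to the singular loci $X_{u_i}^{sing}$, which need not be irreducible. Once one is comfortable that finitely many open-dense conditions intersect in an open-dense set, the rest is bookkeeping. I would also note explicitly that this is precisely Proposition~\ref{intersection}'s first assertion combined with its "Furthermore", so the proof is really: (a) iterate Theorem~\ref{thm:kleiman.variation}; (b) emptiness of the singular locus $\iff$ emptiness of each piece; (c) emptiness of each general-translate intersection $\iff$ vanishing of the corresponding cup product.
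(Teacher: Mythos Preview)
Your proposal is correct and follows essentially the same route as the paper: induct on $r$ using Theorem~\ref{thm:kleiman.variation} to obtain the displayed singular-locus formula, then invoke part (i) of Kleiman's Theorem to make all the intersections $g_i X_{u_i}^{sing} \cap \bigcap_{j\neq i} g_j X_{u_j}$ dimensionally proper, so that emptiness is equivalent to vanishing of the cup product $[X_{u_i}^{sing}]\cdot\prod_{j\neq i}[X_{u_j}]$. If anything, you are more explicit than the paper about the bookkeeping (choosing one dense open set of $g_\bullet$ that works simultaneously for all the finitely many nested applications, and handling the reducibility of $X_{u_i}^{sing}$), which the paper leaves implicit.
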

\smallskip

\begin{proof} Applying
Theorem~\ref{thm:kleiman.variation} and induction on $r$ to 
$g_{1} X_{1} \cap \dotsb \cap g_{r}X_{r}$, it follows that $$(g_1 X_1 \cap
\cdots \cap g_r X_r)^{sing} = \bigcup_{i=1}^r \left( (g_iX_i)^{sing}
\cap \bigcap_{j=1}^r g_j X_j \right)$$ provided that the tuple $(g_1,
\dots, g_r)$ is general in the sense of Kleiman's Transversality
Theorem. In particular, taking $X_i$ to be the Schubert variety 
$X_{u_i} \in X = G/\QP$ we recover the first statement in Proposition
\ref{intersection}.

Since the singular locus of a Schubert variety is a union of Schubert
varieties corresponding to certain smaller dimensional $B$-orbits, we
conclude that the singular locus of a Richardson variety is a union of
Richardson varieties. In particular, by part (i) of Kleiman's
Transversality Theorem, we may assume that all the intersections
\[
g_{i}X_{u_i}^{sing} \cap \bigcap_{j \not= i}
g_{j}X_{u_j}
\]
are dimensionally proper. The cohomology class of this intersection is
the cup product of the cohomology classes of each of the Schubert
varieties. Hence, this intersection is empty if and only if its
cohomology class is zero.  We conclude that the singular locus of an
intersection variety is empty if and only if the cohomology classes
$[X_{u_i}^{sing}] \cdot \prod_{j \not= i} [X_{u_j}] = 0$ for all $1
\leq i \leq r$. This concludes the proof of Proposition
\ref{intersection}.
\end{proof}
\smallskip

Specializing to the case $r=2$, we obtain the following characterization of the singular loci of Richardson varieties.
\smallskip

\begin{corollary}\label{richardson}
Let $R(u,v) = X_u \cap X^v$ be a non-empty Richardson variety in
$G/{\QP}$. Let $X_{u}^{sing}$ and $X^{v}_{sing}$ denote the singular loci
of the two Schubert varieties $X_u$ and $X^v$, respectively. Then the
singular locus of $R(u,v)$ is a union of Richardson varieties
$$R(u,v)^{sing}=(X_{u}^{sing} \cap X^v) \cup (X_u \cap X^{v}_{sing}).$$ In
particular, $R(u,v)$ is smooth if and only if the cohomology classes
$[X_{u}^{sing}]\cdot [X^v] = 0 $ and $[X_u] \cdot [X^{v}_{sing}] = 0$
in the cohomology ring $H^*(G/{\QP}, \ZZ)$.
\end{corollary}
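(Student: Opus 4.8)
The plan is to deduce Corollary~\ref{richardson} as the special case $r = 2$ of Proposition~\ref{intersection}, together with Remark~\ref{dense} which identifies a general translate of a pair of Schubert varieties with the standard opposite pair defining a Richardson variety.

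First I would recall that $R(u,v) = X_u \cap X^v$ where $X^v = w_0 X_v$. By Remark~\ref{dense}, the $G$-orbit of $(B, B^-)$ is dense in $G/B \times G/B^-$, so for a general pair $g_\bullet = (g_1, g_2)$ the intersection variety $R(u, v; g_\bullet) = g_1 X_u \cap g_2 X^v$ is isomorphic (as a pair of subvarieties of $G/P$, hence with isomorphic singular loci) to $X_u \cap X^v = R(u,v)$. Thus I may apply Proposition~\ref{intersection} with $r = 2$, $u_1 = u$, and the second Schubert variety taken to be $X^v$ in place of $X_{u_2}$ (the proposition is stated for $B$-Schubert varieties, but the translates $g_i X_{u_i}$ range over all translates, and $X^v = w_0 X_v$ is such a translate; alternatively one re-runs the one-step argument of Theorem~\ref{thm:kleiman.variation} directly with $Y = X_u$, $Z = X^v$). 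This gives
\[
R(u,v)^{sing} = \big( X_u^{sing} \cap X^v \big) \cup \big( X_u \cap (X^v)^{sing} \big),
\]
and since $X^v = w_0 X_v$ is a translate of a Schubert variety, $(X^v)^{sing} = w_0 (X_v^{sing})$ is a union of translated Schubert varieties; intersecting with $X_u$ (another Schubert variety), each piece is a Richardson variety, so the singular locus of $R(u,v)$ is a union of Richardson varieties. Writing $X^v_{sing}$ for $(X^v)^{sing}$ recovers the displayed formula.

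For the smoothness criterion, I would argue exactly as in the proof of Proposition~\ref{intersection}: by part (i) of Kleiman's Transversality Theorem applied to the (translated) pair, the intersections $X_u^{sing} \cap X^v$ and $X_u \cap X^v_{sing}$ are dimensionally proper, so each has cohomology class equal to the cup product $[X_u^{sing}] \cdot [X^v]$ and $[X_u] \cdot [X^v_{sing}]$ respectively. A dimensionally proper intersection of effective cycles is empty if and only if its class vanishes (the class of a nonempty intersection of the expected dimension is a nonnegative combination of Schubert classes and is nonzero). Hence $R(u,v)^{sing} = \emptyset$, i.e. $R(u,v)$ is smooth, if and only if both $[X_u^{sing}] \cdot [X^v] = 0$ and $[X_u] \cdot [X^v_{sing}] = 0$ in $H^*(G/P, \ZZ)$.

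The only subtlety — and the step I would be most careful about — is the bookkeeping that lets Proposition~\ref{intersection}, stated for $B$-stable Schubert varieties $X_{u_i}$, apply to the pair $(X_u, X^v)$ where $X^v$ is an \emph{opposite} Schubert variety. This is resolved by Remark~\ref{dense}: a general translate of the pair $(X_u, X_v)$ is isomorphic to the pair $(X_u, X^v)$, so nothing is lost. One should also note that $(X^v)^{sing} = w_0(X_v^{sing})$ because $w_0$ acts as an automorphism of $G/P$, and that intersections of Schubert varieties with opposite Schubert varieties are reduced and irreducible (i.e. genuine Richardson varieties) by \cite{richardson}, which is what licenses calling the components of $R(u,v)^{sing}$ "Richardson varieties" rather than merely closed subvarieties. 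No further transversality input is needed beyond what Theorem~\ref{thm:kleiman.variation} and Proposition~\ref{intersection} already provide.
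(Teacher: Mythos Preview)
Your proposal is correct and follows exactly the paper's approach: the paper presents Corollary~\ref{richardson} simply as the specialization of Proposition~\ref{intersection} to $r=2$, invoking Remark~\ref{dense} to identify the general intersection variety $R(u_1,u_2;g_\bullet)$ with the standard Richardson variety $R(u,v)$. Your added care about the bookkeeping (that $X^v$ is an opposite Schubert variety and $(X^v)^{sing}=w_0(X_v^{sing})$) is a useful clarification but not a departure from the paper's argument.
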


\smallskip

\begin{remark}
Checking the vanishing conditions in Corollary \ref{richardson} is
very easy once the singular locus is determined since these conditions
require only the product of pairs of Schubert classes to vanish.  This
is equivalent to testing the relations between pairs of elements in
Bruhat order.  However, in general for $r>2$, checking the vanishing
conditions in Proposition \ref{intersection} is a hard problem which
requires computing Schubert structure constants.  There are many
techniques for doing these computations; see for example
\cite{BGG,B4,Brion:flag,coskun:LR,Dem,duan,duan-zhao,Fulton-book,KK}
and references within those.  It is an interesting open problem to
efficiently characterize all triples $u,v,w \in W/W_{\QP}$ such that
$[X_{u}][X_{v}][X_{w}]=0$.  Purbhoo has given some necessary
conditions for vanishing in \cite{purbhoo}.
\end{remark}
\smallskip


Recall, that the typical way of studying the singularities of Schubert
varieties in the literature relies on tests for the $T$-fixed points
of the Schubert variety. Even for Richardson varieties $R(u,v)$ in the
Grassmannian, there may not be any torus fixed points in the smooth
locus of $R(u,v)$.  Below we give an example.  

\begin{example} 
Consider the Grassmannian variety $G(3,8)$ of $3$-planes in
$\mathbb{C}^{8}$.  Fix a basis $e_1, \dots, e_8$ of $V$. The $T$-fixed
points in $G(3,8)$ are the subspaces spanned by three distinct basis
elements $\{e_{w_{1}}, e_{w_{2}}, e_{w_{3}}\}$. These points are
indexed by permutations $w=[w_{1},\dotsc , w_{8}] \in S_{8}$ such that
$w_{1}<w_{2}<w_{3}$ and $w_{4}<\dotsb < w_{8}$.  Such permutations
could be denoted simply by $(w_{1},w_{2},w_{3})$.  Furthermore, these
permutations are in bijection with partitions that
fit in a $3 \times (8-3)$ rectangle.  So, $u=(4,6,8)=[4,6,8,1, 2, 3, 5, 7]$ is a
$T$-fixed point of $G(3,8)$ and it corresponds with the partition
$(2,1,0)$.  See \cite{Fulton-book} for more detail.

Let both $u$ and $v$ be the permutation $u=(4,6,8)$, or equivalently the partition
$(2,1,0)$ so that $X_{u}$ is isomorphic to
$X^{v}$ but in opposite position.  Consider $R(u,v) \subset G(3,8)$
defined in terms of flags $F_{\bullet}=(F_{1},\dotsc , F_{8})$ and
$G_{\bullet}=(G_{1},\dotsc , G_{8})$ where $F_i$ is the span of the
first $i$ basis elements and $G_i$ is the span of the last $i$ basis
elements.  The Schubert variety $X_u$ is the set of $3$-dimensional
subspaces that intersect $F_4, F_6$ and $F_8$ in subspaces of
dimensions at least $1,2$ and $3$, respectively. Similarly, $X^v$ is
the set of $3$-dimensional subspaces that intersect $G_4, G_6$ and
$G_8$ in subspaces of dimensions at least $1,2$ and $3$, respectively.

The singular locus of $X_{u} \subset G(k,n)$ is the union of Schubert
varieties indexed by all the partitions obtained from the partition
corresponding with $u$ by adding a maximal hook in a way that the
remaining shape is still a partition \cite[Thm 9.3.1]{BLak}.  Thus,
$X_{(4,6,8)}^{sing} = X_{(3,4,8)} \cup X_{(4,5,6)}$.  In terms of
flags, $\Lambda \in X_{u}$ is a singular point if $\dim(\Lambda \cap
F_{4}) \geq 2$ or $\dim(\Lambda \cap F_{6}) =3$.  Similarly, $\Lambda$
is singular for $X^{v}$ if $\dim(\Lambda \cap G_{4}) \geq 2$ or
$\dim(\Lambda \cap G_{6}) =3$.

The $T$-fixed points of $R(u,v)$ consist of subspaces that are spanned
by $e_{i_1} \in F_4, e_{i_2} \in F_6 \cap G_6$ and $e_{i_3} \in G_4$.
We claim that each of these $T$-fixed points is singular in either
$X_{u}$ or $X^{v}$, so by Corollary~\ref{richardson} they are all
singular in $R(u,v)$.  The claim holds since any basis vector in $F_6
\cap G_6$ is also contained in either $F_4$ or $G_4$. We conclude that
the smooth locus of $R(u,v)$ does not contain any torus fixed
points. 
\end{example}
\smallskip

In Remark~\ref{rem:t.fixed}, we will characterize the Richardson
varieties in the Grassmannian that contain a torus fixed smooth point.

More generally, when $G/{\QP}$ is the Grassmannian $G(k,n)$, Corollary
\ref{richardson} implies a nice, geometric characterization of the
smooth Richardson varieties. This characterization is essentially
proved but not explicitly stated in \cite{kreiman}.  \smallskip

\begin{definition}\label{definition-segre}
The {\em Segre product} of $r$ Grassmannians $G(k_1, n_1)\times \cdots \times G(k_r, n_r)$  in $G(\sum k_i, n)$, with $n \geq \sum n_i$, is the image under the direct sum map $$(W_1, \dots, W_r) \mapsto (W_1 \oplus \cdots \oplus W_r).$$
\end{definition}
\smallskip

A Segre product of Grassmannians is a Richardson variety in $G(k,n)$.
Specifically, the Segre product $G(k_1, n_1)\times \cdots \times
G(k_r, n_r)$ is the intersection of the two opposite Schubert
varieties $X_{u} \cap X^{v}$ where
$$
u= (n_1-k_1+1, n_1 - k_1 + 2, \dots, n_1, \dots, \sum_{i=1}^r n_i -
k_r + 1, \dots, \sum_{i=1}^r n_i)
$$ 
and 
$$v = (n - \sum_{i=1}^{r-1} n_i-k_r+1, n - \sum_{i=1}^{r-1} n_i-k_r+2,
\dots, n - \sum_{i=1}^{r-1} n_i, \dots, n-k_1+1, \dots, n-1, n).
$$
This Richardson variety can also be realized as the intersection of
two Schubert varieties with respect to partial flags. Let $F_{j} = V_1
\oplus \cdots \oplus V_j$, where $V_i$ is the $n_i$ dimensional vector
space defining $G(k_i, n_i)$.  Let $E$ be a vector subspace of $V$
complementary to $F_r$.  Let $G_j = E \oplus V_r \oplus
\cdots \oplus V_{r-j+1}$.  Then we have
$$X_u = \{ W \in G(k,n) \ | \
\dim(W \cap F_j) \geq \sum_{i=1}^j k_i \} 
$$ 
and 
$$X^v = \{ W \in G(k,n) \ | \ \dim(W \cap G_j) \geq \sum_{i=r-j+1}^r
k_i \}, $$

\smallskip

\begin{corollary}\label{segre}
A Richardson variety in $G(k,n)$ is smooth if and only if it is a Segre product of Grassmannians. 
\end{corollary}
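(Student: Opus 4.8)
The strategy is to combine the smoothness criterion of Corollary~\ref{richardson} with a concrete description of the singular loci of Schubert varieties in $G(k,n)$ in terms of partitions. Write the Richardson variety as $R(u,v) = X_u \cap X^v$ with $u,v$ corresponding to partitions $\lambda$ and $\mu$ fitting in a $k \times (n-k)$ rectangle, where the condition $R(u,v)\neq\emptyset$ becomes the containment of Young diagrams $\lambda \subseteq \lambda^c$ (the complement of $\mu$'s diagram rotated appropriately), so that $R(u,v)$ is governed by the skew shape $\lambda^c/\lambda$. By Corollary~\ref{richardson}, $R(u,v)$ is smooth if and only if $[X_u^{sing}]\cdot[X^v] = 0$ and $[X_u]\cdot[X^v_{sing}] = 0$ in $H^*(G(k,n),\ZZ)$, and by the description already recalled in the Example above (from \cite[Thm 9.3.1]{BLak}), $X_u^{sing}$ is the union of Schubert varieties $X_\nu$ where $\nu$ ranges over partitions obtained from $\lambda$ by adding a maximal hook whose removal again leaves a partition. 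The vanishing of $[X_\nu]\cdot[X^v]$ is equivalent to a Bruhat-order non-containment, i.e. to $\nu \not\subseteq \lambda^c$.

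The next step is the purely combinatorial heart of the argument: translate the two vanishing conditions into a statement about the skew shape. I would show that $[X_u^{sing}]\cdot[X^v]=0$ and $[X_u]\cdot[X^v_{sing}]=0$ both hold if and only if the skew diagram $\lambda^c/\lambda$ is a disjoint union of rectangles arranged so that no two share a common row or column — equivalently, the skew shape is a "staircase of rectangles'' that, after deleting empty rows and columns, becomes a block-diagonal arrangement of full rectangles. The point is that adding a maximal hook to $\lambda$ and requiring the result to \emph{still} fit inside $\lambda^c$ detects exactly a "dent'' in the boundary between $\lambda$ and $\lambda^c$; forbidding all such hooks (from both sides, using $u$ and $v$ symmetrically) forces the boundary path of $\lambda^c/\lambda$ to have no interior concave or convex corners beyond those separating the rectangular blocks. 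I would make this precise by an explicit inspection of lattice paths: a maximal hook fits iff there is a NE-then-SE (or the opposite) turn in the interior, and ruling these out on both boundaries is exactly the rectangular-block condition.

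Finally, I would match the rectangular-block condition with the Segre product description given just before the corollary. If $\lambda^c/\lambda$ decomposes into rectangles of sizes $k_i \times (n_i - k_i)$ stacked block-diagonally, then reading off the defining flag conditions reproduces exactly the $X_u$ and $X^v$ written above for the Segre product $G(k_1,n_1)\times\cdots\times G(k_r,n_r)$; conversely the explicit $u,v$ for a Segre product give a block-diagonal skew shape. Combined with the already-stated fact that every Segre product \emph{is} such a Richardson variety, and that a product of Grassmannians is visibly smooth, this gives both directions. The main obstacle I anticipate is the combinatorial lemma in the middle paragraph: one must be careful that the hooks allowed by \cite[Thm 9.3.1]{BLak} are \emph{maximal} hooks, so a naive "every inner corner is bad'' argument is too crude — one has to check that a non-rectangular skew shape always admits at least one \emph{maximal} such hook that still fits inside the complement, which requires choosing the hook at an extreme corner of the shape and verifying the fit by a direct coordinate computation.
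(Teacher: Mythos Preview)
Your plan is correct and uses the same two ingredients as the paper --- Corollary~\ref{richardson} together with the known singular locus of Grassmannian Schubert varieties --- but the executions differ in bookkeeping. You work in partition language, aiming for a global combinatorial characterization of the skew shape $\lambda^c/\lambda$ as a block-diagonal union of rectangles by analyzing which maximal hooks can be added. The paper instead stays in permutation coordinates: it records the vanishing $X_u^{sing}\cap X^v=\emptyset$ directly as the numerical condition $u_i+v_{n-i}\le n$ at each position $i$ where $u_{i+1}\ne u_i+1$, and then argues by induction on the number of such ``jump'' positions, using the very first jump to split $R(u,v)$ as a product of two smaller Richardson varieties. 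This inductive peeling is shorter than your proposed global analysis of the skew shape and sidesteps the delicate ``does some maximal hook still fit'' verification you flagged as the main obstacle; on the other hand, your skew-shape picture makes the final identification with a Segre product more visually transparent.
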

\smallskip

\begin{remark}
By taking $X^v$ to equal $G(k,n)$, we see that Corollary \ref{segre} generalizes the well-known fact that a Schubert variety $X_u$ in $G(k,n)$ is smooth if and only if $X_u$ is a sub-Grassmannian. 
\end{remark}
\smallskip

\begin{proof}
A Segre product of Grassmannians is clearly smooth since it is the
Segre embedding of the product of Grassmannians $G(k_1, n_1) \times
\cdots \times G(k_r, n_r)$ in $G(k,n)$ with $k = \sum k_i$ and $n \geq
\sum n_i$.  Conversely, suppose that $R(u,v)$ is a smooth Richardson
variety in $G(k,n)$. Observe that $X_{u}^{sing} \cap
X^{v} = \emptyset$ if and only if $u_i + v_{n-i} \leq n$ for
every $1\leq i \leq k$ for which $u_{i+1} \not= u_i +1$. Similarly,
$X_{u} \cap X^{v}_{sing} = \emptyset$ if and only
if $v_i + u_{n-i} \leq n$ for every $1\leq i \leq k$ for which $v_{i+1} \not= v_i
+1$. Suppose first that $u_{i+1} = u_i+1$ for all $1 \leq i \leq
k-1$. Then the Richardson variety is isomorphic to a Schubert variety
and is smooth if and only if it is a Grassmannian. The corollary now
is immediate by induction on the number of times $u_{i+1} \not=u_i
+1$. Suppose $u_{j+1} \not= u_j +1$. Then $u_j + v_{n-j} \leq n$ and
the Richardson variety is a product of two Richardson varieties in
$G(j, F_{u_j}) \times G(n-j,G_{v_{n-j}})$ and is smooth if and
only if each factor is smooth.
\end{proof}
\smallskip

\begin{remark}
Finally, we note that the proof given in \cite[Remark 7.6.6]{kreiman} for determining the multiplicities of Richardson varieties in minuscule partial flag varieties generalizes by induction to intersection varieties. Let $ R(u_1, \dots, u_r; g_{\bullet})$ be an intersection variety in a minuscule partial flag variety. Then
$$\mbox{mult}_p \big(R(u_1, \dots, u_r; g_{\bullet})\big) =
\prod_{i=1}^r \mbox{mult}_p (g_{i}X_{u_i}).$$
\end{remark}




\section{Projection Varieties}\label{s:main-1}

In this section, we prove that projection varieties have rational
singularities.  This claim follows from a general fact, which we prove
below, about the images of certain subvarieties under Mori
contractions.  We refer the reader to \cite{kollar:mori} for more
detail about Mori theory and rational singularities.

\begin{definition}
A variety $X$ has {\em rational singularities} if there exists a
resolution of singularities $f: Y \rightarrow X$ such that $f_* \OO_Y
= \OO_X$ and $R^if_* \OO_Y = 0$ for $i>0$.
\end{definition}

\begin{remark}
A variety with rational singularities is normal. Moreover, for every resolution $g:Z \rightarrow X$, we have $g_* \OO_Z = \OO_X$ and $R^i g_* \OO_Z = 0$ for $i>0$ (see \cite{kollar:mori}).
\end{remark}

The map $\pi_Q: G/P \rightarrow G/Q$ is a Mori contraction.  To deduce Theorem \ref{main-1} we will apply the following general theorem.  

\smallskip

\begin{theorem}\label{Mori}
Let $X$ be a smooth, projective variety. Let $\pi: X \rightarrow Y$ be a Mori contraction defined by the line bundle $M = \pi^* L$, where $L$ is an ample line bundle on $Y$. Let $Z \subset X$ be a normal, projective subvariety of $X$  with rational singularities. Assume that 
\begin{enumerate}
\item $H^i(Z, M^{\otimes n}|_Z)= 0$ for all $i>0$ and all $n \geq 0$.
\item The natural restriction map $H^0(X, M^{\otimes n}) \rightarrow H^0(Z, M^{\otimes n}|_Z)$ is surjective for all $n \geq 0$.
\end{enumerate} 
Then $W = \pi(Z)$, with its reduced induced structure, is normal and has rational singularities. 
\end{theorem}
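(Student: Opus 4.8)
The plan is to factor the map $\pi|_Z : Z \to W$ through its Stein factorization and then show that the relevant intermediate object is in fact $W$ itself, using the two cohomological hypotheses to rule out the ``twisting'' that an ample line bundle on $W$ would detect. Concretely, let $g : Z' \to Z$ be a resolution of singularities; since $Z$ has rational singularities and is normal, we have $g_*\OO_{Z'} = \OO_Z$ and $R^i g_*\OO_{Z'} = 0$ for $i>0$. Composing with $\pi|_Z$ gives a morphism $h : Z' \to W$ with $W$ reduced; let $Z' \xrightarrow{\varphi} \widetilde W \xrightarrow{\psi} W$ be its Stein factorization, so $\psi$ is finite, $\widetilde W$ is normal, and $\varphi_*\OO_{Z'} = \OO_{\widetilde W}$. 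Because $Z'$ is smooth (hence has rational singularities) and $\varphi$ has connected fibers onto the normal variety $\widetilde W$, a standard argument (Grauert--Riemenschneider / the fact that a resolution of $\widetilde W$ factors through $Z'$ up to a further blow-up, combined with $R^i g_*\OO_{Z'}=0$) shows $\widetilde W$ has rational singularities and, in particular, $R^i\varphi_*\OO_{Z'} = 0$ for $i>0$. So the whole problem reduces to proving that the finite map $\psi : \widetilde W \to W$ is an isomorphism; once that is done, $W \cong \widetilde W$ is normal with rational singularities and we are done.

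To show $\psi$ is an isomorphism it suffices, since $\psi$ is finite and surjective onto the reduced variety $W$, to show it is birational and that $W$ is normal — or, more efficiently, to directly identify $\OO_W$ with $\psi_*\OO_{\widetilde W}$. Here is where hypotheses (1) and (2) enter. Set $M = \pi^*L$ with $L$ ample on $Y$, and let $L_W = L|_W$, which is ample on $W$; note $(\pi|_Z)^* L_W = M|_Z$ and $h^*L_W = g^*(M|_Z)$. Consider the graded ring $R_\bullet := \bigoplus_{n \ge 0} H^0(W, L_W^{\otimes n})$; since $L_W$ is ample, $W \cong \operatorname{Proj} R_\bullet$ (after replacing $L_W$ by a sufficiently divisible power if needed, which is harmless). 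By the projection formula and $R^i h_*\OO_{Z'} = 0$, we get $H^i(Z', h^*L_W^{\otimes n}) = H^i(W, L_W^{\otimes n} \otimes h_*\OO_{Z'})$, and $h_*\OO_{Z'} = \psi_*\varphi_*\OO_{Z'} = \psi_*\OO_{\widetilde W}$. On the other hand, pulling back through $g$ and using $g_*\OO_{Z'} = \OO_Z$, $R^ig_*\OO_{Z'}=0$ gives $H^i(Z', g^*(M^{\otimes n}|_Z)) = H^i(Z, M^{\otimes n}|_Z)$, which by hypothesis (1) vanishes for $i>0$ and all $n \ge 0$. Thus $H^0(W, L_W^{\otimes n} \otimes \psi_*\OO_{\widetilde W}) = H^0(Z, M^{\otimes n}|_Z)$ and the higher cohomology vanishes.

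It remains to compare $H^0(W, L_W^{\otimes n} \otimes \psi_*\OO_{\widetilde W})$ with $H^0(W, L_W^{\otimes n})$. The former is, by the above, $H^0(Z, M^{\otimes n}|_Z)$; by hypothesis (2) the restriction $H^0(X, M^{\otimes n}) \to H^0(Z, M^{\otimes n}|_Z)$ is surjective, and since $M = \pi^*L$ and $\pi$ is a Mori contraction with $\pi_*\OO_X = \OO_Y$, we have $H^0(X, M^{\otimes n}) = H^0(Y, L^{\otimes n})$, whose image in $H^0(Z, M^{\otimes n}|_Z)$ factors through the restriction $H^0(Y,L^{\otimes n}) \to H^0(W, L_W^{\otimes n})$ followed by $\psi^*$. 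Chasing these maps shows the natural inclusion $H^0(W, L_W^{\otimes n}) \hookrightarrow H^0(W, L_W^{\otimes n}\otimes\psi_*\OO_{\widetilde W})$ is also surjective, hence an isomorphism, for every $n \ge 0$. Taking $\operatorname{Proj}$ of the resulting isomorphism of graded rings identifies $W$ with $\widetilde W$, so $\psi$ is an isomorphism and $W$ is normal with rational singularities.

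I expect the main obstacle to be the bookkeeping in the last step: making the identification of section rings canonical (rather than merely an equality of dimensions) so that it genuinely yields $\psi^\#: \OO_W \xrightarrow{\sim} \psi_*\OO_{\widetilde W}$, and handling the ``sufficiently divisible power of $L_W$'' issue so that $\operatorname{Proj}$ of the Veronese subring really recovers $W$ and $\widetilde W$ on the nose. A secondary technical point is justifying that $\widetilde W$ (equivalently, that $R^i\varphi_*\OO_{Z'}=0$) has rational singularities purely from $Z$ having rational singularities; this is where one uses that rational singularities are preserved under the Stein factorization of a proper morphism from a variety with rational singularities onto a normal variety, which can be cited from \cite{kollar:mori}.
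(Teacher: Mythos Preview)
Your argument for normality is essentially the paper's: once you unwind the identifications, your inclusion $H^0(W,L_W^{\otimes n}) \hookrightarrow H^0(W,L_W^{\otimes n}\otimes\psi_*\OO_{\widetilde W})$ is exactly the paper's map $H^0(W,L^{\otimes n}) \to H^0(W,(\pi|_Z)_*\OO_Z\otimes L^{\otimes n})$, since $h_*\OO_{Z'}=(\pi|_Z)_*g_*\OO_{Z'}=(\pi|_Z)_*\OO_Z=\psi_*\OO_{\widetilde W}$. Surjectivity via hypothesis (2) and the factoring through $H^0(Y,L^{\otimes n})$ is fine, and for $n\gg 0$ Serre vanishing kills $H^1(W,L_W^{\otimes n})$, so the cokernel sheaf is zero and $\psi$ is an isomorphism. (Note that the $H^0$ identifications you use hold without any higher direct image vanishing, so this part does not depend on your unproved claim.)

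The genuine gap is your assertion that ``a standard argument (Grauert--Riemenschneider / \dots) shows $\widetilde W$ has rational singularities and, in particular, $R^i\varphi_*\OO_{Z'}=0$.'' This is false as stated and cannot be cited from \cite{kollar:mori} or anywhere else: take $\widetilde W$ to be any normal projective variety without rational singularities (say the cone over a smooth plane curve of genus $\ge 1$) and $\varphi:Z'\to\widetilde W$ its resolution. Then $Z'$ is smooth, $\widetilde W$ is normal, $\varphi$ has connected fibers, and $\varphi_*\OO_{Z'}=\OO_{\widetilde W}$, yet $R^1\varphi_*\OO_{Z'}\neq 0$. Grauert--Riemenschneider gives vanishing for $R^i\varphi_*\omega_{Z'}$, not for $R^i\varphi_*\OO_{Z'}$; and Koll\'ar's theorem takes $R^i\varphi_*\OO_{Z'}=0$ as \emph{input}, not output. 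You have not yet used hypothesis (1) in any essential way, and this is precisely what it is for. The paper's Step~1 supplies the missing piece: by the projection formula $R^i(\pi|_Z)_*(M^{\otimes n}|_Z)=L_W^{\otimes n}\otimes R^i(\pi|_Z)_*\OO_Z$, and for $n\gg 0$ the Leray spectral sequence for $\pi|_Z$ degenerates (Serre vanishing on $W$), giving $H^0(W,L_W^{\otimes n}\otimes R^i(\pi|_Z)_*\OO_Z)\cong H^i(Z,M^{\otimes n}|_Z)=0$ by (1); since $L_W$ is ample this forces $R^i(\pi|_Z)_*\OO_Z=0$. Only then can you feed $\phi=(\pi|_Z)\circ g:Z'\to W$ (with $R^i\phi_*\OO_{Z'}=0$ now following from the Grothendieck spectral sequence) into Koll\'ar's theorem to conclude rational singularities. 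What you flagged as a ``secondary technical point'' is the heart of the matter.
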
 
\smallskip

\begin{proof}
Denote the restriction of the map $\pi$ to $Z$ also by $\pi$. To simplify notation, we will denote the restriction of the line bundles $M$ to $Z$ and $L$ to $W$ again by $M$ and $L$. 
\smallskip

\noindent Step 1.  We first show that $R^i \pi_* \OO_Z = 0$ for $i>0$. Since $L$ is ample on $W$, by Serre's Theorem, $R^i \pi_* \OO_Z \otimes L^{\otimes n}$ is generated by global sections for $n >>0$ (\cite{hartshorne}, II.5.17). Therefore, to show that $R^i \pi_* \OO_Z = 0$, it suffices to show that $H^0(W, R^i \pi_* \OO_Z \otimes L^{\otimes n}) = 0$ for $n>>0$. Similarly, by Serre's Theorem, $H^j(W, R^i \pi_* \OO_Z \otimes L^{\otimes n})=0$ for all $j>0$ and all $n>>0$ (\cite{hartshorne}, III.5.2). Since only finitely many sheaves $R^i \pi_* \OO_Z$ are non-zero, we may choose a number $N$ such that for all $n\geq N$, $R^i \pi_* \OO_Z \otimes L^{\otimes n}$ is globally generated and has no higher cohomology for all $i$. 
\smallskip

Given a coherent sheaf $\mathcal{F}$ on $Z$, the Leray spectral
sequence expresses the cohomology of $\mathcal{F}$ in terms of the
cohomology of the higher direct image sheaves $R^i \pi_* \mathcal{F}$
on $W$. More precisely, the spectral sequence has $E_2^{p,q} =
H^p(W, R^q \pi_* \mathcal{F})$ and abuts to $H^{p+q}(Z,
\mathcal{F})$. We apply the  spectral sequence to $\mathcal{F} =
M^{\otimes n}$ for $n \geq N$. Since $M = \pi^* L$, by the projection
formula, $R^i \pi_* M^{\otimes n} = R^i \pi_* \OO_Z \otimes L^{\otimes
n}$. Since $n$ is chosen so that $H^p(W, R^q \pi_* \OO_Z \otimes
L^{\otimes n})=0$ for $p>0$, we conclude that the  spectral
sequence degenerates at the $E_2$ term. Consequently, $H^0(W, R^i
\pi_* \OO_Z \otimes L^{\otimes n}) \cong H^i(Z, M^{\otimes n})$. Since
by assumption, $H^i(Z, M^{\otimes n})=0$ for $i>0$ and $n>0$, we
conclude that $H^0(W, R^i \pi_* \OO_Z \otimes L^{\otimes n})=0$ for
$i>0$ and $n \geq N$. Therefore, $R^i \pi_* \OO_Z =0$ for $i>0$.
\smallskip

\noindent Step 2. We next show that $\pi_* \OO_Z = \OO_W$. In particular, this implies that the Stein factorization of the map $\pi: Z \rightarrow W$ is trivial (\cite{hartshorne}, III.11.5). Therefore, the fibers of the map $\pi$ are connected and $W$ is normal.  Let $\mathcal{F}$ be the cokernel of the natural injection from $\OO_W$ to $\pi_* \OO_Z $. We thus obtain the exact sequence $$(*) \ \ 0 \rightarrow \OO_W \rightarrow \pi_* \OO_Z \rightarrow \mathcal{F} \rightarrow 0.$$  We want to show that $\mathcal{F}=0$. Since $L$ is ample, $\mathcal{F} \otimes  L^{\otimes n}$ is globally generated for $n >>0$. Hence, it suffices to show $H^0 (W, \mathcal{F} \otimes L^{\otimes n})=0$ for $n >>0$. Using the long exact sequence of cohomology associated to the exact sequence $(*)$ and the fact that $H^1 (W, L^{\otimes n})=0$ for $n >>0$, to conclude that $\mathcal{F}=0$, it suffices to show that $h^0(W, L^{\otimes n}) = h^0(W, \pi_* \OO_Z \otimes L^{\otimes n})$ for $n > > 0$. 
\smallskip

Consider the exact sequence $$0 \rightarrow I_W \rightarrow \OO_Y \rightarrow \OO_W \rightarrow 0.$$ Tensoring the exact sequence by $L^{\otimes n}$ for $n>>0$, we get that the restriction map $H^0(Y, L^{\otimes n}) \rightarrow H^0(W, L^{\otimes n})$ is surjective. Since the map $\pi: X \rightarrow Y$ is a Mori contraction, $\pi_* \OO_X = \OO_Y$. Consequently, by the projection formula $$H^0(Y, L^{\otimes n}) = H^0(Y, \pi_* \OO_X \otimes L^{\otimes n}) = H^0(X, M^{\otimes n}).$$ The inclusion of $Z$ in $X$ and $W$ in $Y$ gives rise to the commutative diagram

\begin{align*}
 &H^0(Y, L^{\otimes n})  =  H^0(Y, \pi_* \OO_X \otimes L^n) = H^0(X, M^{\otimes n})&  \\
&\ \ \ \ \ \ \downarrow \ \ \ \ \ \ \ \ \ \ \ \ \ \ \ \ \ \ \ \ \ \ \ \ \ \ \ \ \ \ \ \ \ \ \ \ \swarrow& \\
&H^0(W, L^{\otimes n}) \ \ \ \ \ \ \ \ \ \ \ \ \ \ \  \ \ \ \ \ H^0(Z, M^{\otimes n})&\\
&\ \ \ \ \ \ \ \downarrow  \ \ \ \ \ \ \ \  \ \ \ \ \ \ \ \ \ \ \ \ \ \ \swarrow& \\
&H^0(W, \pi_* \OO_Z \otimes L^{\otimes n})  & 
\end{align*}

By assumption, the restriction map $H^0(X, M^{\otimes n}) \rightarrow H^0(Z, M^{\otimes n})$ is surjective for all $n \geq 0$. $H^0(Z, M^{\otimes n}) \cong H^0(W, \pi_* \OO_Z \otimes L^{\otimes n})$. In particular, it follows that $H^0(Y, L^{\otimes n}) \rightarrow H^0(W, \pi_* \OO_Z \otimes L^{\otimes n})$ is surjective. Therefore, the map $H^0(W, L^{\otimes n}) \rightarrow H^0(W, \pi_* \OO_Z \otimes L^{\otimes n})$ must be surjective. Since by the exact sequence $(*)$, it is also injective, we conclude that $h^0(W, L^{\otimes n}) = h^0(W, \pi_* \OO_Z \otimes L^{\otimes n})$ for $n > > 0$. This concludes the proof that $\OO_W \cong \pi_* \OO_Z$ and that $W$ is normal.
\smallskip

\noindent Step 3. Finally, to conclude that $W$ has rational singularities, we simply apply a theorem of Koll\'{a}r. First, observe that since $Z$ has rational singularities by assumption, for any desingularization $\rho: U \rightarrow Z$, we have that  $\rho_* \OO_U \cong \OO_Z$ and $R^i \rho_* \OO_U = 0$ for $i>0$. Hence, considering $\phi = \pi \circ \rho: U \rightarrow W$, we have that $\phi_* \OO_U = \pi_* (\rho_* \OO_U) =  \OO_W$ and $R^i \phi_* \OO_U =0$ for $i>0$. In particular, the Stein factorization of $\phi$ is trivial and the geometric generic fiber of $\phi$ is connected. Koll\'{a}r's Theorem 7.1 in \cite{kollar} then guarantees that $W$ has rational singularities. This concludes the proof.
\end{proof}

\smallskip

\begin{remark}
The proof of Theorem \ref{Mori} does not use the full strength of the hypotheses. In assumptions (1) and (2), it is not necessary to require the vanishing of higher cohomology and the surjectivity of the restriction map for all $n\geq 0$. It suffices to assume these only for sufficiently large $n$.  
\end{remark}
\smallskip

Theorem \ref{Mori} reduces understanding singularities of certain subvarieties of flag varieties to the vanishing of cohomology. 
The higher cohomology groups of the restriction of NEF line bundles on flag varieties to Schubert or Richardson varieties vanish. Hence, Theorem \ref{Mori} is a very useful tool in the context of Schubert geometry. For instance, Theorem \ref{Mori} immediately implies Theorem \ref{main-1}.
\smallskip

\begin{proof}[Proof of Theorem \ref{main-1}]
Let $Q \subset G$ be a parabolic subgroup containing $P$. Let $P_i$, $i=1, \dots, j$, be the maximal parabolic subgroups containing $Q$. Let $\pi_Q: G/P \rightarrow G/Q$ and $\pi_{P_i}: G/Q \rightarrow G/P_i$ denote the natural projections.  
In Theorem \ref{Mori} take $$X= G/P \ \ \mbox{and} \ \ Y=G/Q.$$ Let $L$ be the ample  line bundle on $Y$ defined by $L = L_{i_1} \otimes L_{i_2} \otimes \cdots \otimes L_{i_j}$, where $L_{i_s}$ is the pull-back of the ample generator of the Picard group of $G/P_i$ under the natural projection map $\pi_{P_i}$. Let $M = \pi_{Q}^* L$. Then $M$ is NEF and defines the projection $\pi_{Q}: X \rightarrow Y.$
Let $Z$ be the Richardson variety $R(u,v)$. It is well-known that Richardson varieties are normal with rational singularities (\cite{Brion:flag} Theorem 4.1.1). Furthermore, the higher cohomology of the restriction of a NEF line bundle on the flag variety to a Richardson variety vanishes and the restriction map on global sections is surjective \cite{Brion:flag} Theorem 4.2.1 (ii) and Remark 4.2.2. Hence, all the assumptions of Theorem \ref{Mori} are satisfied. We conclude that the projection variety $\pi_Q (Z)$ is normal and has rational singularities. 
\end{proof}


\section{Singularities of Grassmannian projection varieties}\label{s:grassmannian.projetion.varieties}

In this section, we discuss the singularities of projection varieties in Type A Grassmannians. We first characterize projection varieties in $G(k,n)$ without reference to a projection from a flag variety. Given a projection variety, we then exhibit a minimal Richardson variety projecting to it. This Richardson variety is birational to the projection variety and the projection map does not contract any divisors. This description allows us to characterize the singular loci of projection varieties.  We first begin by introducing some notation. 
\smallskip

\begin{notation}
Let $0<k_1 < k_2 < \cdots <k_m <n$ be an increasing sequence of positive integers less than $n$. We set $k_0=0$ and $k_{m+1} = n$. Let $Fl(k_1, \dots, k_m; n)$ denote the partial flag variety parameterizing partial flags  $(V_1 \subset \cdots \subset V_m)$ of length $m$ in $V$ such that $V_i$ has dimension $k_i$.  

The cohomology of $Fl(k_1, \dots, k_m; n)$ admits a $\ZZ$-basis
generated by the classes of Schubert varieties. Schubert varieties in
$F(k_1, \dots, k_m; n)$ are parameterized by partial permutations in
$\mathfrak{S}_n$ with $k_m$ entries and at most $m$ descents at
positions $k_1, k_2, \dots, k_m$. We record these permutations as a
list of $k_m$ distinct positive integers $u= (u_1, u_2, \cdots,
u_{k_m})$ less than or equal to $n$ such that $u_j < u_{j+1}$ unless
$j = k_i$ for some $1\leq i \leq m$. Given an entry $u_i$ in the
permutation, there exists a unique $l$ such that $k_{l-1} < i \leq
k_l$. We say that the {\em color} $c_i$ of the entry $u_i$ is $l$.
Geometrically, the entries of the permutation record the dimensions of
the elements in flag $F_{\bullet}=(F_{1},\ldots, F_{n})$ defining the
Schubert variety $X_u$ where a jump in dimension occurs and the
corresponding color records the minimal $j$ for which the dimension of
$V_j$ is required to increase:
$$X_u (F_{\bullet}) = \{ (V_1, \dots, V_m) \in Fl(k_1, \dots, k_m; n)
\ | \ \dim(V_j \cap F_{u_i}) \geq \# \{u_l \leq u_i \ | \ c_l \leq j
\}\}.$$ It is convenient to assign a multi-index $I_u(i) = (s_1^i,
\dots, s_m^i)$ to each entry in a permutation by letting $$s_j^i = \#
\{u_l \leq u_i \ | \ c_l \leq j \}.$$ In particular, using the
multi-indices, the definition of a Schubert variety can be expressed
more compactly:
$$X_u (F_{\bullet}) = \{ (V_1, \dots, V_m) \in Fl(k_1, \dots, k_m; n) \ | \ \dim(V_j \cap F_{u_i}) \geq s_j^i \}.$$
\end{notation}
\smallskip

\begin{example}
Let $u= (1,4,8, 3, 9, 2,7)$ be a permutation for $F(3,5,7; 9)$. Then
the color of the entries $1,4,8$ is $1$, the color of the entries $3,9$ is
$2$ and the color of the entries $2,7$ is $3$. The multi-indices are
$I_u(1) = (1,1,1), I_u(2)= (2,3,4), I_u(3)= (3,4,6), I_u(4)= (1,2,3), I_u(5)=
(3,5,7), I_u(6)= (1,1,2), I_u(7)= (2,3,5)$. The corresponding Schubert
variety parameterizes flags $(V_1, V_2, V_3)$ such that $V_3$ is
required to intersect $F_1, F_2, F_3, F_4$, $ F_7, F_8, F_9$ in subspaces
of dimension at least $1,2,3,4,5,6,7$, respectively. $V_2$ is required
to intersect $F_1, F_3, F_4, F_8, F_9$ in subspaces of dimension at
least $1,2,3,4,5$, respectively. Finally, $V_1$ is required to
intersect $F_1, F_4, F_8$ in subspaces of dimension at least $1,2,3$,
respectively.
\end{example}
\smallskip

Recall that a {\em Grassmannian projection variety} is the projection of a Richardson variety $R(u,v)$ in $Fl(k_1, \dots, k_m; n)$ to $G(k_m, n)$ under the natural projection map.

\begin{remark}
There may be many different ways of realizing the same projection
variety as projections of different Richardson varieties. For example,
take two successive projections of flag varieties $$\mathcal{F}_1=
Fl(k_1, k_2, k_3; n) \stackrel{\pi_1}{\longrightarrow} \mathcal{F}_2=
Fl(k_2, k_3; n) \stackrel{\pi_2}{\longrightarrow} G(k_3,n).$$ Given a
Richardson variety $R(u,v) \subset \mathcal{F}_2$,
$\pi_1^{-1}(R(u,v))=R(u',v')$ is a Richardson variety in
$\mathcal{F}_1$. Hence, the projection variety $\pi_2(R(u,v))$ may
also be realized as the projection variety $\pi_2 \circ \pi_1
(R(u',v'))$.  In particular, the Grassmannian projection varieties all
come from projections of Richardson varieties in the complete flag
variety.  This proves that rank varieties are examples of the (closed) positroid varieties defined in
\cite[Section 5]{KLS}.  Note, that
lifting a Richardson variety in $\mathcal{F}_{1}$ up to
$\mathcal{F}_{2}$ and then projecting it to $G(k,n)$ need not be a
birational map.  
\end{remark}

\smallskip

It is convenient to have a characterization the projection varieties
without referring to the projection of a Richardson variety.  Such a
characterization can be obtained in terms of rank varieties.  We
recall the following notation from Section~\ref{s:intro}.  Let $V$ be
an $n$-dimensional vector space. Fix an ordered basis $e_1, \dots,
e_n$ of $V$.  Let $W= [e_i, e_j]$ be the vector space spanned by a
consecutive set of basis elements $e_i, e_{i+1}, \dots, e_j$.  Let
$l(W)= i$ and $r(W)= j$ be the smallest and largest index of the basis
elements contained in $W,$ respectively.  A {\em rank set} $M$ for
$G(k,n)$ is a set of $k$ vector spaces $M=\{ W_1, \dots, W_k \}$,
where each vector space is the span of (non-empty) consecutive
sequences of basis elements and $l(W_i) \not= l(W_j)$ and $r(W_i)
\not= r(W_j)$ for $i \not= j$. Given a rank set $M$, the corresponding
{\em rank variety} $X(M)$ is the subvariety of $G(k,n)$ defined by the
Zariski closure of the set of $k$-planes in $V$ that have a basis
$b_1, \dots, b_k$ such that $b_i \in W_i$ for $W_i \in M$.

\begin{remark}
Alternatively, one can define a rank variety $X(M)$ as the variety of
$k$-planes that intersect any vector space $W$ spanned by the ordered
basis in a subspace of dimension at least the number of $W_i \in M$
contained in $W$ \begin{align*}X(M) = \{\Lambda \in G(k,n) \ | \
\dim(\Lambda \cap W) \geq \#\{W_i \in M \ | \ W_i \subset W\} \\
\mbox{for every} \ W = <e_{i_1}, \dots, e_{i_s}> \}.\end{align*} These rank equations 
give rise to the terminology. In the sequel we will not need this fact, so we leave showing the equivalence of the two definitions to the reader.
\end{remark}

We can characterize projection varieties in $G(k,n)$ as rank varieties. 

\begin{theorem}\label{projection=rank}
$X \subset G(k,n)$ is a projection variety if and only if $X$ is a rank variety.
\end{theorem}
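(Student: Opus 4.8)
The plan is to establish both implications by translating between the combinatorial data of a Schubert variety in a partial flag variety (a colored permutation, together with the multi-indices $I_u(i)$) and a rank set. First I would prove the forward direction: given a Richardson variety $R(u,v)\subset Fl(k_1,\dots,k_m;n)$, I want to produce a rank set $M$ with $\pi(R(u,v))=X(M)$. The Schubert variety $X_u(F_\bullet)$ imposes lower bounds $\dim(V_j\cap F_{u_i})\geq s_j^i$ coming from the reference flag $F_\bullet$, and $X^v(G_\bullet)$ imposes analogous bounds coming from the opposite flag $G_\bullet$. Since a point of $\pi(R(u,v))$ is the top space $V_m$ (a $k_m$-plane) in some flag satisfying both sets of conditions, I would show that one can choose, for each of the $k=k_m$ "steps" in $u$, a coordinate subspace $W_\ell=[\,e_{a_\ell},\dots,e_{b_\ell}\,]$ whose left endpoint is dictated by the $G_\bullet$-condition (equivalently by $v$) and whose right endpoint is dictated by the $F_\bullet$-condition (equivalently by $u$); transversality of $F_\bullet$ and $G_\bullet$ guarantees these are intervals with distinct left endpoints and distinct right endpoints, i.e. a genuine rank set. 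A $k$-plane admits a basis $b_\ell\in W_\ell$ for all $\ell$ if and only if it arises as $V_m$ of a flag in $R(u,v)$; taking Zariski closures gives $\pi(R(u,v))=X(M)$.

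For the converse, given a rank set $M=\{W_1,\dots,W_k\}$ I would order the $W_i$ so that $r(W_1)<\cdots<r(W_k)$ and then read off a Richardson variety. The left endpoints $l(W_i)$ need not be increasing in this order, and the number $m$ of distinct "levels" of nesting among the $W_i$ determines which partial flag variety $Fl(k_1,\dots,k_m;n)$ to use; the color of the $i$-th step records how deep in an intermediate flag the $i$-th basis vector $b_i$ is forced to live. Concretely, I would define $u$ from the right endpoints $r(W_i)$ (together with the coloring coming from the containment pattern of the $W_i$) and $v$ from the left endpoints $l(W_i)$, check that the resulting multi-indices reproduce exactly the rank conditions $\dim(\Lambda\cap W)\geq\#\{W_i\subseteq W\}$ defining $X(M)$, and conclude $\pi(R(u,v))=X(M)$.

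The main obstacle I anticipate is the bookkeeping in the converse direction: one must verify that the combinatorial gadget extracted from a rank set actually satisfies the descent/monotonicity constraints required of a colored permutation indexing a Schubert variety in $Fl(k_1,\dots,k_m;n)$ — that is, $u_j<u_{j+1}$ except when $j=k_i$, and likewise for $v$ — and that the two one-sided Schubert conditions, one from $u$ and one from $v$, intersect to carve out precisely $X(M)$ rather than something larger or smaller. This amounts to a careful induction on $k$ (or on the number of distinct nesting levels $m$), peeling off the basis vector with the smallest right endpoint, exactly in the style of the inductive arguments used elsewhere in the paper. I also want to be a little careful that the projection $\pi$ is surjective onto $X(M)$ and not merely dominant; since Schubert and Richardson varieties are closed and $\pi$ is proper, the image is closed, so identifying the image with $X(M)$ on a dense open subset suffices. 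Finally, the birationality and the absence of contracted divisors — which will be needed in the next results — are not part of this statement and can be deferred; here I only need the set-theoretic equality $\{\text{projection varieties}\}=\{\text{rank varieties}\}$ as subvarieties of $G(k,n)$.
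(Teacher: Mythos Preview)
Your proposal is correct and follows essentially the same two-algorithm strategy as the paper: one construction (your converse direction) turns a rank set into a Richardson variety by assigning colors via containment chains and reading $u$ off the right endpoints and $v$ off the left endpoints, and another (your forward direction) turns a Richardson variety into a rank set by pairing each entry of $u$ with an entry of $v$ to form an interval $W_\ell=F_{u_i}\cap G_{v_j}$; one then checks that the projection of the Richardson variety equals the corresponding rank variety and that the two constructions are mutually inverse on the rank-set side.

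One point worth flagging: you locate the main obstacle in the converse direction (verifying descent conditions for $u$ and $v$), but in the paper that verification is essentially immediate and the compatibility lemma in that direction is dismissed as ``clear, hence left to the reader.'' The genuinely delicate step is the one you pass over quickly in the forward direction, namely deciding \emph{which} $v_j$ to pair with a given $u_i$. The naive choice (pair $u_i$ with $v_i$, or pair by position after sorting) does not in general produce the correct rank set; the paper instead runs a greedy procedure that, at each stage, takes the smallest remaining entry $\alpha$ of $u$, looks at its color $c$, and then among the remaining entries of $v$ of each color $d\geq c$ selects the maximal one $\beta^d$, finally choosing the smallest $\beta^d$ satisfying a rank compatibility inequality involving the multi-indices $I_u$ and $I_v$. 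Your phrase ``one can choose, for each of the $k$ steps in $u$, a coordinate subspace $W_\ell$'' is exactly where this work hides, so when you carry out the argument be prepared for the bookkeeping to live there rather than in the descent check.
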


We will prove Theorem \ref{projection=rank} in several steps. We begin
by giving two algorithms. The first algorithm associates a Richardson
variety $R(u,v)(M)$ to every rank variety $X(M)$ in $G(k,n)$ in a
minimal way such that the projection of $R(u,v)(M)$ is $X(M)$. Given a
Grassmannian projection variety, the second algorithm associates to it
a rank set with the corresponding rank variety being equal to the
original projection variety.  

Throughout this proof, we fix two opposite flags
$F_{\bullet}=(F_{1},\dotsc , F_{n})$ and $G_{\bullet} =(G_{1},\dotsc
,G_{n})$ where $F_i$ is the span of the first $i$ basis elements $e_1,
\dots, e_i$ and $G_i$ in $G_{\bullet}$ is the span of the last $i$
basis elements $e_n, \dots, e_{n-i+1}$.  Then
$X_{u}= X_{u}(F_{\bullet})$ and $X^{v}=X_{v}(G_{\bullet})$.

\begin{algorithm}\label{rich}[{\bf Associating a Richardson variety to
a rank variety.}] In this algorithm, given a rank set $M=\{W_1, \dots,
W_k\}$ and its rank variety $X(M)$ in $G(k_m, n)$, we will associate a
Richardson variety $R(u,v)(M)$ in an appropriate flag variety $F(k_1,
\dots, k_m; n)$ such that the projection of $R(u,v)$ is $X(M)$. 
 \smallskip

\noindent {\bf Step 1: Associate a color to each vector space $W_i$.}
Let $m$ be the longest chain of subspaces $$W_{j_1} \supsetneq W_{j_2}
\supsetneq \cdots \supsetneq W_{j_m},$$ where each $W_{j_s} \in M$.
For a vector space $W_i \in M$, let $m_i$ be the length of the longest
chain
$$W_{j_1} \supsetneq W_{j_2} \supsetneq \cdots \supsetneq W_{j_{m_i}}
= W_i,$$ where $W_{j_s} \in M$. Assign $W_i$ the color $c_i = m - m_i
+ 1$. From now on we decorate the vector spaces in the rank set with
their color $W_i^{c_i}$.  \smallskip
 
 \noindent {\bf Step 2: Define two opposite Schubert varieties.}  Let
$k_j$ be the number of vector spaces in the rank set that are assigned
a color less than or equal to $j$. We define two Schubert varieties in
$F(k_1, \dots, k_m; n)$. Recall that $r(W_i)$ is the index of the
basis element with the largest index in $W_i$. Let $u$ be the
permutation defined by the numbers $r(W_i)$ listed so that those
corresponding to vector spaces of color $c$ all occur before those of
color $c+1$ and among those of the same color the numbers are
increasing. Similarly, recall that $l(W_i)$ is the index of the basis
element with the smallest index in $W_i$. Let $v$ be the permutation
defined by the numbers $n-l(W_i)+1$ listed so that those corresponding
to vector spaces of color $c$ all occur before those of color $c+1$
and among those of the same color the numbers are increasing. Let the
{\em minimal Richardson variety} $R(u,v)(M)$ associated to the rank
set $M$ be the Richardson variety $X_{u}\cap X^{v}= X_u (F_{\bullet}) \cap X_{v}(G_{\bullet})$ in $F(k_1, \dots, k_m; n)$.
\end{algorithm} 
\medskip

\begin{remark}
The requirement that $l(W_i) \not= l(W_j)$ and $r(W_i) \not= r(W_j)$
for $i \not= j$ guarantees that the numbers $r(W_i)$ and $n-l(W_i)+1$
are all distinct. Moreover, both permutations have at most $m$
descents at places $k_1, \dots, k_m$ by construction. Therefore,
Algorithm \ref{rich} produces well-defined partial permutations $u$
and $v$ for $F(k_1, \dots, k_m; n)$.
\end{remark}

\medskip

\begin{algorithm}\label{rank}[{\bf Associating a rank set to a
Richardson variety.}] Let $R(u,v) = X_u \cap X^v = X_{u}(F_{\bullet})
\cap X_{v}(G_{\bullet})$ be a Richardson variety in $F(k_1, \dots,
k_m; n)$. In this algorithm, we associate a rank set $M(R(u,v))$ to
$R(u,v)$ such that the projection of $R(u,v)$ to $G(k_m, n)$ is the
rank variety $X(M(R(u,v)))$.  \smallskip

\noindent {\bf Step 0.} Recall that each $u_i$ in the permutation $u$
is assigned a color $c_i$, where $c_i=l$ if $k_{l-1} < i \leq k_l$,
and a multi-index $I_u(i) = (s_1^i, \dots, s_m^i)$, where $s_j^i = \#
\{ u_l \leq u_i \ | \ c_l \leq j \}$. Similarly, each $v_i$ in the
permutation $v$ is assigned a color $d_i$ and a multi-index $I_v(i) =
(t_1^i, \dots, t_m^i)$. Given an entry $u_j$ in a permutation, recall
$u^{-1}(u_j)= j$ denotes the index of the entry. The color, index and
multi-index are assigned to $u_i$ or $v_i$ in a permutation for once
and for all and do NOT vary during the algorithm.  \smallskip

\noindent {\bf Step 1.} Let $U_{k_m}= \{u_i: 1\leq i\leq k_{m}\}$ be
the initial set of entries in the permutation $u$ and let $V_{k_m} =
\{v_i : 1\leq i\leq k_{m}\}$ be the analogous the set of entries for
$v$.  Let $M_0$ be the empty set. At each stage, we will remove an
element from each of $U_{k_m}$ and $V_{k_m}$ and add a vector space to
$M_0$ until we exhaust $U_{k_m}$ and $V_{k_m}$.  \smallskip

\noindent {\bf Initial Step.} Let 
\[
\alpha = \min_{u_i \in U_{k_m}} (u_i). 
\]
Let $c=c_{u^{-1}(\alpha)}$ be the color of the entry
$\alpha$.  For each $d \geq c$, let $$\beta^d = \max_{v_i \in V_{k_m}
\text{ s.t. } d_i \leq d} (v_i).$$ Let 
$\beta$ be the minimum  $\beta^{d}$ over all $d\geq c$.
%
%
Let $W_1 = F_{\alpha} \cap G_{\beta}$, where $F_{\bullet}$
and $G_{\bullet}$ are the two flags defining the Schubert
varieties $X_u$ and $X^v$, respectively as always.  Set $U_{k_m -1} =
U_{k_m} - \{\alpha\}$,\ 
$V_{k_m -1} = V_{k_m} - \{ \beta\}$ and $M_1 = \{ W_1 \}$.  \medskip

\noindent {\bf The Inductive Step.} Suppose we have defined
$M_\mathfrak{t}$ and are left with two subsets $U_{k_m -\mathfrak{t}}$
and $V_{k_m -\mathfrak{t}}$ of the entries from the permutations $u$
and $v$. Let $$\alpha= \min_{u_i \in
U_{k_m-\mathfrak{t}}} (u_i).$$ 
Let $c=c_{u^{-1}(\alpha)}$ as above. 
For each $d \geq c$, let
\[
\beta^d = \max_{v_i \in V_{k_m-\mathfrak{t}} \text{ s.t. } d_i \leq d} \ (v_i).
\]
Let 
$\beta$ be the minimum  $\beta^{d}$ over all  $d\geq c$ for which 
$t_d^{v^{-1}(\beta^d)}
\geq k_d - s_d^{u^{-1}(\alpha)} +1$.
Since the inequality $t_m^{v^{-1}(\beta^m)} \geq k_m -
s_m^{u^{-1}(\alpha)} +1$ is satisfied, such a $\beta$ must exist. Let
$$W_{\mathfrak{t}+1} = F_{\alpha} \cap G_{\beta}.$$ Set $$U_{k_m
-\mathfrak{t}-1} = U_{k_m-\mathfrak{t}} - \{\alpha\},
$$ 
$$V_{k_m-\mathfrak{t}- 1} = V_{k_m-\mathfrak{t} } - \{\beta\}$$ and
$M_{\mathfrak{t}+1} = M_{\mathfrak{t}} \cup \{ W_{\mathfrak{t}+1} \}$.
\medskip

\noindent{\bf Step 2.} The inductive loop in Step 1 terminates when $\mathfrak{t}={k_m}$.  The rank set associated to the Richardson variety $R(u,v)$ is $M(R(u,v))= M_{k_m}$.
\end{algorithm}

\medskip

\begin{remark}
Every vector space $W_i$ formed during the algorithm occurs as
$F_{\alpha} \cap G_{\beta}$. Hence, $W_i$ is the span of the
consecutive set of basis elements $e_{n-\beta + 1}, \dots,
e_{\alpha}$. Since each $\alpha$ and each $\beta$ occur only once
during the algorithm, $l(W_i) \not= l(W_j)$ and $r(W_i)\not= r(W_j)$
for $i\not=j$. Therefore, $M(R(u,v))$ is a rank set.
\end{remark}

Before proceeding to prove Theorem \ref{projection=rank}, we give some examples of Algorithm \ref{rich} and Algorithm \ref{rank}.

\begin{example}
Let $M$ be the rank set $W_1 = [e_1, e_7], W_2 = [e_2, e_6], W_3=[e_3,
e_4], W_4 = [e_4, e_5], W_5= [e_6, e_8]$ so $X(M)\subset G(5,8)$.
Then $(m_{1},\dotsc , m_{5})=(1,2,3,3,1)$ so the colors of the vector
spaces are $W_1^3, W_2^2, W_3^1, W_4^1, W_5^3$ as indicated by the
superscripts. Hence, the corresponding minimal Richardson variety is
contained in $Fl(2,3,5;8)$ and has defining permutations
$u=(4,5,6,7,8)$ and $v=(5,6,7,3,8)$.  \smallskip
 
Conversely, suppose we begin with the Richardson variety associated to
the permutations $u=(4,5,6,7,8)$ and $v=(5,6,7,3,8)$ in
$Fl(2,3,5;8)$.  Construct the following tables of associated data: 
\[
\begin{array}{c|c|c|c}
i &  u_{i} & c_{i} & I_{u}(i)\\
\hline 
1 &	4 &	1 &	(1,1,1)\\
2 &	5 &	1 &	(2,2,2)\\
3 &	6 &	2 &	(2,3,3)\\
4 &	7 &	3 &	(2,3,4)\\
5 &	8 &	3 &	(2,3,5)
\end{array}
\hspace{.5in}
\begin{array}{c|c|c|c}
i &  v_{i} & d_{i} & I_{v}(i)\\
\hline 
1 &	5 &	1 &	(1,1,2)\\
2 &	6 &	1 &	(2,2,3)\\
3 &	7 &	2 &	(2,3,4)\\
4 &	3 &	3 &	(0,0,1)\\
5 &	8 &	3 &	(2,3,5).
\end{array}
\]
In Algorithm \ref{rank}, the vector spaces that are formed are
$W_{1}=F_4 \cap G_6=[e_3,e_4]$, $W_{2}=F_5 \cap G_5 = [e_4, e_5]$,
$W_{3}=F_6 \cap G_7= [e_2, e_6]$, $W_{4}=F_7 \cap G_8 = [e_1, e_7]$
and $W_{5}=F_8 \cap G_3= [e_6, e_8]$.  We elaborate on the computation
of $W_{2}$.  For $\mathfrak{t}=2$, we have $\alpha = 5$ and $c=1$, so
$\beta^{1}=5, \beta^{2}=7, \beta^{3}=8$.  The minimum among the
$\beta^{d}$'s is 5 with $d=1$.  The tricky condition
\[
t_d^{v^{-1}(\beta^d)} \geq k_d - s_d^{u^{-1}(\alpha)} +1
\]
is satisfied since $1= t^{1}_{1}< k_{1} - s_{1}^{2} +1 =2 -2 +1 =1$,
hence $\beta =5$. Observe that we recover the initial rank set.
\end{example}

\begin{example}
Let $R(u,v)$ be the Richardson variety in $F(2,4;7)$ associated to the
permutations $u=(4,6,2,7)$ and let $v=(2,7,3,5)$. 
Construct the following tables of associated data: 
\[
\begin{array}{c|c|c|c}
i &  u_{i} & c_{i} & I_{u}(i)\\
\hline 
1 &	4 &	1 &	(1,2)\\
2 &	6 &	1 &	(2,3)\\
3 &	2 &	2 &	(0,1)\\
4 &	7 &	2 &	(2,4)
\end{array}
\hspace{.5in}
\begin{array}{c|c|c|c}
i &  v_{i} & d_{i} & I_{v}(i)\\
\hline 
1 &	2 &	1 &	(1,1)\\
2 &	7 &	1 &	(2,4)\\
3 &	3 &	2 &	(1,2)\\
4 &	5 &	2 &	(1,3).
\end{array}
\]
In Algorithm \ref{rank}, the vector spaces that are formed when computing $M(R(u,v))$ are 
$W_1=F_{2}\cap G_{7}=[e_1, e_2]$,
$W_2=F_{4} \cap G_{5}=[e_3, e_4]$, 
$W_3=F_{6} \cap G_{2}=[e_6]$, and  
$W_4=F_{7} \cap G_{3}=[e_5,e_7]$.

\smallskip

If we begin with the rank set $M=\{W_1=[e_1, e_2], W_2=[e_3, e_4],
W_3=[e_6], W_4=[e_5, e_7]\}$, then the colors of the vector spaces
assigned in the Algorithm \ref{rich} are $W_1^2, W_2^2, W_3^1,
W_4^2$. Hence, Algorithm \ref{rich} assigns the Richardson variety
$R(u',v')$ in $F(1,4;7)$, where $u' = (6,2,4,7)$ and
$v'=(2,3,5,7)$. Observe that $R(u',v')$ is different from $R(u,v)$. In
fact, they are not subvarieties of the same flag variety. The
projection of $R(u,v)$ to $X(M(R(u,v)))$ has positive dimensional
fibers, where as the projection map from $R(u',v')$ to $X(M(R(u,v)))$
is birational.
\end{example}

We are now ready to start the proof of Theorem \ref{projection=rank}. We first determine the dimension of rank varieties. 
 
 \begin{lemma}\label{dim}
 The rank variety $X(M)$ associated to a rank set $M$ is an irreducible subvariety of $G(k,n)$ of dimension $$\dim(X(M)) = \sum_{i=1}^k \dim(W_i) - \sum_{i=1}^k \#\{ W_j \in M \ | \ W_j \subseteq W_i \}.$$
 \end{lemma}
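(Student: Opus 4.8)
The plan is to exhibit an explicit affine chart that dominates $X(M)$ and count its dimension. First I would produce a generic point of $X(M)$: choose for each $W_i\in M$ a "free" basis vector $b_i=\sum_{e_\ell\in W_i}x_{i\ell}e_\ell$, so that a general point of $X(M)$ is the span of $b_1,\dots,b_k$. The parameter space of the coefficient matrix $(x_{i\ell})$ has dimension $\sum_{i=1}^k\dim(W_i)$, and the rational map sending $(x_{i\ell})$ to $\operatorname{span}(b_1,\dots,b_k)\in G(k,n)$ has dense image $X(M)$ by definition. So $\dim X(M)$ equals $\sum\dim(W_i)$ minus the generic fiber dimension of this map. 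Irreducibility of $X(M)$ is then immediate, being the closure of the image of an irreducible variety under a morphism.

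The heart of the argument is computing the generic fiber dimension, i.e. the dimension of the stabilizer in $\mathrm{GL}_k$ (acting by row operations $b_i\mapsto\sum_j a_{ij}b_j$) of a generic tuple that preserves the constraint $b_i\in W_i$. I would argue that for a generic choice of the $x_{i\ell}$, an element $(a_{ij})\in\mathrm{GL}_k$ sends $b_i$ into $W_i$ if and only if $a_{ij}=0$ whenever $W_j\not\subseteq W_i$: necessity follows because a generic $b_j$ has a nonzero component on some $e_\ell\in W_j$ that is not in $W_i$ when $W_j\not\subseteq W_i$ (using $l(W_j)\ne l(W_i)$, $r(W_j)\ne r(W_i)$ and consecutiveness of the spans to see that $W_j\not\subseteq W_i$ forces such an $\ell$ to exist), so the only way $\sum_j a_{ij}b_j\in W_i$ for generic parameters is to kill those coefficients; sufficiency is clear since $a_{ij}\ne 0$ only when $W_j\subseteq W_i\subseteq W_i$. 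Hence the admissible matrices $(a_{ij})$ form a linear space of dimension $\sum_{i=1}^k\#\{W_j\in M : W_j\subseteq W_i\}$, and those which are invertible form a dense open subset of that linear space, of the same dimension. This is the number subtracted in the formula.

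The main obstacle I anticipate is the genericity/reduction-to-row-operations step: I must check that the fibers of the parametrization map over a dense open subset of $X(M)$ are exactly the orbits of this stabilizer group — that is, that two generic coefficient matrices give the same $k$-plane if and only if they differ by such an admissible $(a_{ij})\in\mathrm{GL}_k$. The "if" direction is trivial; the "only if" direction requires that a generic point of $X(M)$ has a \emph{unique} (up to admissible row operations) representation with $b_i\in W_i$, which is where the combinatorial hypotheses $l(W_i)\ne l(W_j)$, $r(W_i)\ne r(W_j)$ genuinely enter. I would establish this by ordering the $W_i$ by inclusion-compatible refinement and peeling off, at each stage, the coordinate $e_\ell$ with $\ell=r(W_i)$ for a maximal $W_i$: since $r$ is injective on $M$, the coefficient of $e_{r(W_i)}$ in a generic point of $X(M)$ pins down $b_i$ up to lower terms and up to adding multiples of the $b_j$ with $W_j\subsetneq W_i$, and induction finishes. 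Once uniqueness is in hand, the fiber-dimension count above gives the claimed formula. I would also note the degenerate case $\dim W_i$ containing no "new" index is handled automatically since then $W_i$ contributes $\dim W_i$ to the first sum and at least $\dim W_i$... — more carefully, the two sums are simply compared termwise, and no positivity of the answer needs separate checking because $X(M)$ is nonempty.
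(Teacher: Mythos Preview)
Your approach is correct but takes a genuinely different route from the paper's. The paper argues by induction on $k$: remove the $W_1$ with minimal $l(W_i)$ to obtain a rank set $M'$ for $G(k-1,n)$, observe that the forgetful map $X(M)\dashrightarrow X(M')$ is dominant with generic fiber an open subset of a projective space of dimension $\dim W_1-\#\{W_j\subseteq W_1\}$, and conclude by summing. This is a five--line proof. Your argument instead parametrizes $X(M)$ by the affine space $\prod_i W_i$ and identifies the generic fiber with the group of invertible elements of the incidence algebra of the poset $(M,\subseteq)$; the dimension count then drops out all at once. Your approach has the virtue of making the group action explicit (and in particular exhibits the admissible matrices as an honest unipotent-by-torus subgroup of $\mathrm{GL}_k$), while the paper's fibration argument is shorter and sidesteps any genericity bookkeeping.

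One comment on your ``main obstacle'' paragraph: the peeling argument you sketch is more involved than necessary. The cleanest way to justify that, for generic $(b_j)$, the condition $\sum_j a_{ij}b_j\in W_i$ forces $a_{ij}=0$ whenever $W_j\not\subseteq W_i$ is to note that this is equivalent to the images $\bar b_j\in V/W_i$ (for $j$ with $W_j\not\subseteq W_i$) being linearly independent, which is an open condition on $\prod_j W_j$; then exhibit a single witness. For the witness, split the indices $j$ with $W_j\not\subseteq W_i$ into those with $r(W_j)>r(W_i)$ and those with $r(W_j)<r(W_i)$ (hence $l(W_j)<l(W_i)$); picking $b_j=e_{r(W_j)}$ for the former and $b_j=e_{l(W_j)}$ for the latter yields distinct basis vectors of $V$ lying outside $W_i$, so their images in $V/W_i$ are independent. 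Since each such condition (one for each $i$) is open and nonempty in the irreducible space $\prod_j W_j$, their intersection is dense, and you are done. This replaces your inductive peeling with a one-line specialization.
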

 \begin{proof}
 This is a special case of Lemma 3.29 in \cite{coskun:LR} and follows easily by induction on $k$. When $k=1$, the rank variety is projective space of dimension $\dim(W_1)-1$, as claimed in the lemma. Let $W_1$ be the vector space with minimal $l(W_i)$ in $M$. Omitting $W_1$  gives rise to a rank variety $X(M')$ in $G(k-1, n)$. There is  a dominant morphism from a dense open subset of $X(M)$ to $X(M')$ and the fibers are open subsets in a projective space of dimension $$\dim(W_1) - \# \{ W_i \in M \ | \ W_i \subseteq W_1\}.$$ The lemma follows by induction. 
 \end{proof}
 
 Next we show that the projection of $R(u,v)$ is $X(M(R(u,v)))$.
 
 \begin{lemma}\label{lemma-1}
 Let $R(u,v)$ be a Richardson variety in $F(k_1, \dots, k_m; n)$. Let $\pi$ denote the projection to $G(k_m, n)$. Then $\pi(R(u,v)) = X(M(R(u,v)))$. 
 \end{lemma}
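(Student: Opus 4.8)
The plan is to prove the two inclusions $\pi(R(u,v)) \subseteq X(M(R(u,v)))$ and $X(M(R(u,v))) \subseteq \pi(R(u,v))$ separately, using the fact that both sides are irreducible closed subvarieties of $G(k_m,n)$ so it suffices to compare dense subsets and then take closures. Write $M = M(R(u,v)) = \{W_1, \dots, W_{k_m}\}$ for the rank set produced by Algorithm \ref{rank}, where $W_{\mathfrak{t}+1} = F_\alpha \cap G_\beta$ with $\alpha, \beta$ the indices selected at stage $\mathfrak{t}$. The key structural observation is that the $\alpha$'s are exactly the entries of the permutation $u$ (each removed from $U_{k_m}$ once) and the $\beta$'s are exactly the entries of $v$ (each removed from $V_{k_m}$ once), so the data of $M$ repackages the defining incidence conditions of $X_u(F_\bullet)$ and $X^v(G_\bullet)$.

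\textbf{First inclusion.} Let $(V_1, \dots, V_m) \in R(u,v)$ be a general point, so it lies in the dense $B \times B^-$-type orbit where all the intersection dimensions $\dim(V_j \cap F_{u_i})$ and $\dim(V_j \cap G_{v_i})$ attain their minimal prescribed values $s_j^i$ and $t_j^i$ exactly. I would show $V_m$ has a basis $b_1, \dots, b_{k_m}$ with $b_\ell \in W_\ell$. The idea is to run an induction parallel to Algorithm \ref{rank}: having chosen $b_1, \dots, b_{\mathfrak t}$ in $W_1, \dots, W_{\mathfrak t}$ that are linearly independent and lie in $V_m$, one uses the dimension count at stage $\mathfrak{t}$ — precisely the inequality $\dim(V_m \cap F_\alpha) \geq s_m^{u^{-1}(\alpha)}$ combined with $\dim(V_m \cap G_\beta) \geq t_m^{v^{-1}(\beta)} \geq k_m - s_m^{u^{-1}(\alpha)} + 1$ — to conclude $\dim(V_m \cap F_\alpha \cap G_\beta) \geq 1$ and, more carefully, that $V_m \cap F_\alpha \cap G_\beta = V_m \cap W_{\mathfrak t + 1}$ contains a vector not in the span of $b_1, \dots, b_{\mathfrak t}$. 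The bookkeeping must track that the earlier $b$'s account for exactly the "used up" part of $V_m \cap F_\alpha$, which is where the auxiliary condition $t_d^{v^{-1}(\beta^d)} \geq k_d - s_d^{u^{-1}(\alpha)} + 1$ governing the choice of $\beta$ enters. This shows $V_m \in X(M)$ for general points, hence $\pi(R(u,v)) \subseteq X(M)$ after taking closures, and equality of dimensions (via Lemma \ref{dim} compared against $\dim R(u,v) = l(u) - l(w_0 v)$, or by a direct fiber-dimension argument) would actually be convenient to record here but is not strictly needed for this inclusion.

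\textbf{Second inclusion.} Conversely, given a general $\Lambda \in X(M)$ with basis $b_\ell \in W_\ell$, I would build a partial flag $(V_1 \subseteq \dots \subseteq V_m = \Lambda)$ lying in $R(u,v)$ by setting $V_j = \operatorname{span}\{b_\ell : c(W_\ell) \leq j\}$, where $c(W_\ell)$ is the color assigned in Algorithm \ref{rich} (which one checks is compatible with the colors $c_i, d_i$ from $u, v$). Then one verifies the Schubert incidence conditions $\dim(V_j \cap F_{u_i}) \geq s_j^i$ and $\dim(V_j \cap G_{v_i}) \geq t_j^i$ directly from $W_\ell = F_{\alpha_\ell} \cap G_{\beta_\ell}$ and the definitions of $u, v$: a basis vector $b_\ell$ lies in $F_{u_i}$ precisely when $\alpha_\ell \leq u_i$, and counting such $\ell$ with $c(W_\ell) \leq j$ recovers $s_j^i$. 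This produces a point of $R(u,v)$ projecting to $\Lambda$, giving the reverse inclusion on a dense set and hence everywhere.

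\textbf{Main obstacle.} The delicate point is the first inclusion — specifically, verifying at each inductive stage that the vector space $V_m \cap F_\alpha \cap G_\beta$ genuinely contributes a \emph{new} basis vector not already in $\operatorname{span}(b_1, \dots, b_{\mathfrak t})$, rather than merely being nonzero. This requires knowing that the previously selected pairs $(\alpha', \beta')$ with $\alpha' < \alpha$ occupy a subspace of $V_m \cap F_\alpha$ of dimension exactly $s_m^{u^{-1}(\alpha)} - 1$ (no more), which in turn relies on the genericity of the point in $R(u,v)$ forcing equality in all the Schubert rank conditions, and on the precise minimality built into the choice of $\beta$ in Algorithm \ref{rank}. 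Getting this transversality/dimension-tracking argument right — essentially an induction mirroring the algorithm, with the loop invariant being the exact dimensions of $\operatorname{span}(b_1,\dots,b_{\mathfrak t}) \cap F_i \cap G_j$ — is the technical heart of the lemma.
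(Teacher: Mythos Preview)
Your overall two-inclusion strategy on dense open subsets matches the paper's, but each half diverges from the paper's argument, and the second half has a genuine gap.

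For the first inclusion, your inductive construction of an adapted basis of $V_m$ is workable in principle but far more laborious than what the paper does. The paper instead invokes the rank-conditions characterization of $X(M)$ (the alternative description in the Remark following the definition of rank variety): one only needs $\dim(V_m \cap F_{u_i} \cap G_{v_j})$ to be at least the number of $W_\ell \in M$ contained in $F_{u_i}\cap G_{v_j}$. This follows immediately from the single linear-algebra inequality $\dim(V_r \cap F_{u_i} \cap G_{v_j}) \geq s_r^i + t_r^j - k_r$ together with $V_r \subseteq V_m$, maximizing over $r$; the algorithm is designed precisely so that this maximum equals the required rank. Your ``main obstacle'' (tracking that each new $b_{\mathfrak t+1}$ is genuinely new) simply does not arise.

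For the second inclusion there is a real error. You set $V_j = \operatorname{span}\{b_\ell : c(W_\ell)\leq j\}$ with $c(W_\ell)$ the color from Algorithm \ref{rich}, asserting this is ``compatible with the colors $c_i,d_i$ from $u,v$''. That compatibility fails in general: the paper's own second example, $u=(4,6,2,7)$, $v=(2,7,3,5)$ in $F(2,4;7)$, yields a rank set whose Algorithm-\ref{rich} colors give only one vector space of color $1$, so your $V_1$ would be one-dimensional, not two-dimensional, and the resulting flag is not even a point of $F(2,4;7)$. The issue is that Algorithm \ref{rank} followed by Algorithm \ref{rich} returns the \emph{minimal} Richardson variety, which may live in a different partial flag variety altogether. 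The paper's remedy is to run Algorithm \ref{rank} separately on each truncation $\tau_s u=(u_1,\dots,u_{k_s})$, $\tau_s v=(v_1,\dots,v_{k_s})$ for $s<m$, obtaining vector spaces $W_1^s,\dots,W_{k_s}^s$ of the correct cardinality $k_s$, and then building $V_s$ downward by $b_i^s=\sum_{b_j^{s+1}\in W_i^s} b_j^{s+1}$.
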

 
 \begin{proof} Let $(V_1, \dots, V_m) \in R(u,v)$. We first prove that
$\pi(R(u,v)) \subset X(M(R(u,v)))$. It suffices to check that $V_m$
satisfies all the rank conditions imposed by $M(R(u,v))$. The basic
linear algebra fact is that $$\dim(V_r \cap F_{u_i} \cap
G_{v_j}) \geq s_r^i + t_r^j - k_r$$ since $V_r$ intersects
$F_{u_i}$ and $G_{v_j}$ in subspaces of dimension at least $s_r^i$ and
$t_r^j$, respectively. Furthermore, since $V_r \subset V_m$, we
conclude that $$\dim(V_m \cap F_{u_i} \cap
G_{v_j}) \geq \max_{1 \leq r \leq m} s_r^i + t_r^j -
k_r.$$ Now notice that $M(R(u,v))$ is constructed so that $\dim(V_m
\cap F_{u_i} \cap G_{v_j})$ precisely equals
$\max_{1 \leq r \leq m} s_r^i + t_r^j - k_r$. Hence, the rank
conditions imposed by $M(R(u,v))$ are satisfied by $V_m$ for $(V_1,
\dots, V_m) \in R(u,v)$.
 
Next, we show that the projection of $R(u,v)$ is onto $X(M(R(u,v)))$. Since $R(u,v)$ is a projective variety and $\pi$ is a morphism, the image $\pi(R(u,v))$ is a projective variety. Hence, it suffices to show that a general point of $X(M(R(u,v)))$ is in the image of $\pi$. There is a dense open set of $X(M(R(u,v)))$ consisting of $k$-planes $\Lambda$ such that $$\dim(\Lambda \cap W_i) = \# \{ W_j \in M \ | \ W_j \subseteq W_i \}$$ for every $i$ and $$\dim(\Lambda \cap W_i \cap W_j) = \# \{ W_t \in M \ | \ W_t  \subset W_i \cap W_j \}$$ for every $i,j$. Fix such a $k$-plane $\Lambda$ that has a basis $(b_1, \dots, b_k)$ with $b_i \in W_i$.
Let $\tau_s u$ be the truncation of the permutation $u$ obtained by taking the first $k_s$ numbers $(u_1, \dots, u_{k_s})$ in the permutation $u$. Similarly, let $\tau_s v$ be the corresponding truncation of $v$. 
For every $1 \leq s < m$, construct a sequence of vector spaces $W_1^s, \dots, W_{k_s}^s$ by running the Algorithm \ref{rank} with the permutations $\tau_s u$ and $\tau_s v$ for the flag variety $F(k_1, \dots, k_s; n)$. Inductively, we define a point of the Richardson variety $R(u,v)$ as follows. Let $V_m = \Lambda$. For every $W_i^{m-1}$ let $b_i^{m-1} = \sum_{b_j \in W_i^{m-1}} b_j$. Let $V_{m-1}$ be the span of the vectors $b_i^{m-1}$. Continuing by descending induction, let $b_i^s = \sum_{b_j^{s+1} \in W_i^s} b_j^{s+1}$. Let $V^s$ be the vector space spanned by the vectors $b_i^s$. In this way, we obtain a partial flag $(V_1, \dots, V_m)$. By construction, it is easy to see that this partial flag lies in both Schubert varieties $X_u$ and $X^v$, hence in the Richardson variety $R(u,v)$. Furthermore, $\pi((V_1, \dots, V_m)) = \Lambda$. We conclude that $\pi$ is surjective. This concludes the proof.
\end{proof}

 \begin{lemma}\label{lemma-2}
Let $M_0$ be a rank set for $G(k,n)$. Let $R(u,v)(M_0)$ be the associated minimal Richardson variety assigned by Algorithm \ref{rich}. Let $M(R(u,v))(M_0)$ be the rank set associated to $R(u,v)(M_0)$ by Algorithm \ref{rank}. Then $M(R(u,v)(M_0)) = M_0$.  
 \end{lemma}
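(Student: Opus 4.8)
The plan is to show that Algorithm~\ref{rich} and Algorithm~\ref{rank} are mutually inverse, i.e.\ that starting from a rank set $M_0=\{W_1^{c_1},\dots,W_k^{c_k}\}$ (with colors assigned as in Step~1 of Algorithm~\ref{rich}), building the minimal Richardson variety $R(u,v)(M_0)\subset F(k_1,\dots,k_m;n)$, and then running Algorithm~\ref{rank} on it, recovers each $W_i$ exactly. Since every $W_i=[l(W_i),r(W_i)]$ is determined by the pair $(l(W_i),r(W_i))$, and since by construction $u$ is the list of the numbers $r(W_i)$ (sorted by color, then increasing) and $v$ is the list of the numbers $n-l(W_i)+1$ (sorted the same way), it suffices to check that at stage $\mathfrak{t}$ Algorithm~\ref{rank} picks $\alpha=r(W_i)$ and $\beta=n-l(W_i)+1$ for the appropriate index $i$, so that $W_{\mathfrak{t}+1}=F_\alpha\cap G_\beta=[\,n-\beta+1,\alpha\,]=[\,l(W_i),r(W_i)\,]=W_i$.

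First I would set up the bookkeeping: because the entries of $u$ are precisely $\{r(W_1),\dots,r(W_k)\}$, the choice $\alpha=\min_{u_i\in U}u_i$ in Algorithm~\ref{rank} selects the vector space $W_i\in M_0$ with the smallest remaining right-endpoint $r(W_i)$; call it $W$. I claim $\beta$ then equals $n-l(W)+1$. The color $c$ of $\alpha$ is, by construction in Algorithm~\ref{rich}, the color $c(W)$ of $W$. Among the $v$-entries of color $\le d$ (for $d\ge c$), $\beta^d=\max$ is $n-l(W')+1$ for the vector space $W'$ of color $\le d$ with the smallest remaining left-endpoint $l(W')$. The key combinatorial point is: the chain structure that defines the colors forces the ``smallest left-endpoint among colors $\le d$'' to be realized, for $d=c(W)$ itself (the minimal admissible $d$), precisely by $W$ — because if some $W'$ with $c(W')\le c(W)$ had $l(W')<l(W)$, then together with $r(W)$ being minimal among remaining right-endpoints one derives $W'\supsetneq W$ (they overlap and $W'$ contains $W$'s span), which would make $c(W')>c(W)$, a contradiction; hence $\beta^{c}=n-l(W)+1$. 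Then I must verify that $d=c$ is the value of $d$ attaining the minimum $\beta=\min_d\beta^d$ subject to the ``tricky'' inequality $t_d^{v^{-1}(\beta^d)}\ge k_d-s_d^{u^{-1}(\alpha)}+1$; since $\beta^c$ is already the largest candidate is wrong — rather, I want $\beta$ to be $\beta^c$, so I need $\beta^c\le\beta^d$ fails in general, so instead the inequality must \emph{exclude} all $d$ with $\beta^d<\beta^c$. This is where the multi-index conditions built into the minimal construction are used: for the minimal Richardson variety the multi-indices of $u$ and $v$ are exactly the ``tight'' ones (each $s^i_j$ and $t^i_j$ count subspaces $W_\ell$ with the appropriate containment and color), and a direct count shows the inequality $t_d^{v^{-1}(\beta^d)}\ge k_d-s_d^{u^{-1}(\alpha)}+1$ holds exactly when $\beta^d\ge n-l(W)+1$. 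Granting this, $\beta=n-l(W)+1$ and the first step of Algorithm~\ref{rank} reproduces $W$.

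I would then argue by induction on $\mathfrak{t}$: after removing $\alpha=r(W)$ from $U$ and $\beta=n-l(W)+1$ from $V$, the remaining entries of $u$ and $v$ are exactly the lists $r(W_j)$ and $n-l(W_j)+1$ over the $W_j\in M_0\setminus\{W\}$, sorted by the same color order (colors are fixed throughout the algorithm, per Step~0), and the multi-index/color data attached to the surviving entries is unchanged. The only subtlety is that the colors on $M_0\setminus\{W\}$ are no longer necessarily the ``chain-length'' colors of that smaller rank set — but Algorithm~\ref{rank} never recomputes colors, it only reads the colors frozen at the start, so the inductive hypothesis applies to the residual data verbatim and each subsequent stage peels off the next $W_j$ correctly. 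After $k$ stages we have $M_k=M_0$, which is the claim.

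The main obstacle is the bookkeeping around the ``tricky'' inequality $t_d^{v^{-1}(\beta^d)}\ge k_d-s_d^{u^{-1}(\alpha)}+1$: one has to show it is satisfied exactly by those $d$ for which $\beta^d$ is at least $n-l(W)+1$, and not by any $d$ that would produce a smaller $\beta$. This requires unwinding the definitions of $s^i_j$ and $t^i_j$ for the specific permutations $u,v$ output by Algorithm~\ref{rich} and matching the counts $k_d-s_d^{u^{-1}(\alpha)}$ and $t_d^{v^{-1}(\beta^d)}$ against the number of vector spaces of a given color lying inside, or disjoint from, $W$. I expect this to be a somewhat delicate but purely combinatorial case analysis on the color structure of the chains in $M_0$; everything else (identifying $\alpha$, identifying $\beta^c$, the induction) is routine once that matching is in hand.
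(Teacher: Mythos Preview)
The paper's own proof of this lemma is a single sentence: ``This is clear, hence left to the reader.'' So there is nothing to compare against; your outline is precisely the kind of verification the authors are asking the reader to supply, and the overall strategy --- match each step of Algorithm~\ref{rank} to the vector space $W\in M_0$ with the smallest remaining right endpoint, and induct --- is correct.

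One point in your write-up is more tangled than it needs to be, and straightening it out removes what you flag as the ``main obstacle.'' You worry that $\beta^c$ might not be the minimum among the $\beta^d$, and therefore that you must show the tricky inequality \emph{fails} for those $d$ with $\beta^d<\beta^c$. But in fact $\beta^d$ is \emph{non-decreasing} in $d$: as $d$ grows, more $v$-entries become eligible, so the maximum $\beta^d$ can only go up (equivalently, the minimum remaining left-endpoint among colors $\le d$ can only go down). Hence $\beta^c=\min_{d\ge c}\beta^d$ automatically, and all you must check is that the inequality $t_c^{v^{-1}(\beta^c)}\ge k_c-s_c^{u^{-1}(\alpha)}+1$ holds at $d=c$ itself. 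That is a short count: for $W_j$ of color $\le c$, your containment observation ($l(W_j)<l(W)$ and $r(W_j)>r(W)$ would force $W\subsetneq W_j$ and hence $c(W_j)>c$) shows that $r(W_j)>r(W)$ implies $l(W_j)>l(W)$; since $W$ itself contributes to the left side but not the right, the inequality follows. With this simplification the ``delicate case analysis'' you anticipate evaporates, and the induction goes through exactly as you describe.
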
 
  
 \begin{proof}
 This is clear, hence left to the reader.
 \end{proof}

\begin{proof}[Proof of Theorem \ref{projection=rank}]
We are now ready to prove Theorem \ref{projection=rank}. By Lemma \ref{lemma-1}, every projection variety is a rank variety. By Lemma \ref{lemma-2}, every rank variety arises as a projection variety. 
These statements together imply that rank varieties are projection varieties and vice versa. 
\end{proof}

\begin{remark}\label{rem:t.fixed}
As a first application, we can determine the torus fixed points in the
smooth locus of a Richardson variety in $G(k,n)$. The rank set $M$
associated to a Richardson variety consists of $k$ vector spaces $W_1,
\dots, W_k$ such that they all have color one (equivalently, there are
no containment relations among different vector spaces $W_i$ and
$W_j$). We can assume that these vector spaces are ordered in
increasing order by $l(W_i)$. Equivalently, we can order the vector
spaces $W_i$ by $r(W_i)$ in increasing order. Since $W_i \not\subset
W_j$ for $i\not= j$, this leads to the same order. The torus fixed
points are $k$-dimensional subspaces that are spanned by $k$ distinct
basis elements $e_{i_1}, \dots, e_{i_k}$ with $i_1< i_2 < \cdots <
i_k$. For a particular torus fixed point to be contained in the rank
variety, we must have $e_{i_j} \in W_j$. To see this, note that $$(1)
\ \ \dim(Span(e_{i_1}, \dots, e_{i_k}) \cap Span(W_1, W_2, \dots,
W_j)) \geq j$$ and $$(2) \ \ \dim(Span(e_{i_1}, \dots, e_{i_k}) \cap
Span(W_k, W_{k-1}, \dots, W_j)) \geq k-j+1.$$ If $e_{i_j} \not\in
W_j$, then either $e_{i_1}, \dots, e_{i_j} \in [e_1, e_{l(W_j)-1}]$ or
$e_{i_j}, \dots, e_{i_k} \in [e_{r(W_j)+1}, e_n]$. The first case
contradicts the second inequality and the second case contradicts the
first inequality.  Now we are ready to characterize the torus fixed
points in the smooth locus of $X(M)$. They are spanned by $e_{i_1},
\dots, e_{i_k}$ with $i_1 < i_2 < \cdots < i_k$ and $e_{i_j} \in W_j$
such that:
\begin{enumerate}
\item If $l(W_{j+1}) > l(W_j) + 1$, then $e_{i_j} \not\in W_{j+1}$; and
\item If $r(W_{j-1}) < r(W_j) -1$, then $e_{i_j} \not\in W_{j-1}$
\end{enumerate}
To see that these are necessary and sufficient conditions, simply use Corollary \ref{richardson}  and the description of singularities of Schubert varieties in Grassmannians.
In particular, if $u_{i+1} > u_i + 1$ for $1 \leq i < k$ and $v_{i+1} > v_i +1$ for  $1 \leq i < k$, then the Richardson variety $R(u,v)$ has a torus fixed point in its smooth locus if and only if every vector space $W_j$ in the corresponding rank set contains a basis element $e_{i_j}$ which is not contained in any of the other vector spaces in the rank set. 
\end{remark}

\begin{theorem}\label{t:rank.variety}
Let $M$ be a rank set for $G(k,n)$. Let  $R(u,v)(M)$ be the Richardson variety associated to $M$ by Algorithm \ref{rich}. Let $$\pi: R(u,v)(M) \longrightarrow X(M)$$ be the corresponding projection morphism. Then $R(u,v)(M)$ is birational to $X(M)$ under $\pi$ and the exceptional locus of $\pi$ has codimension at least $2$.
\end{theorem}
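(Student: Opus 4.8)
The plan is to prove the statement in two parts: (a) $\pi$ is birational, and (b) the locus in $R(u,v)(M)$ over which $\pi$ is not a local isomorphism has codimension at least $2$. As a preliminary reduction, note that $\pi\bigl(R(u,v)(M)\bigr)=X\bigl(M(R(u,v)(M))\bigr)=X(M)$ by Lemmas \ref{lemma-1} and \ref{lemma-2}, so $\pi\colon R(u,v)(M)\to X(M)$ is surjective, and both $R(u,v)(M)$ (a Richardson variety) and $X(M)$ (by Lemma \ref{dim}) are irreducible. In particular, once (a) is proved we also get $\dim R(u,v)(M)=\dim X(M)$.

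For (a) I would restrict to the dense open $U\subseteq X(M)$ used in the proof of Lemma \ref{lemma-1}: the $k$-planes $\Lambda$ with $\dim(\Lambda\cap W)=\#\{W_i\in M:W_i\subseteq W\}$ for every coordinate subspace $W$, together with the analogous equalities for the $W_i\cap W_j$. That proof already exhibits one point of $\pi^{-1}(\Lambda)$, so the task is uniqueness. Given $(V_1,\dots,V_m)\in\pi^{-1}(\Lambda)$, I would argue by descending induction that each $V_j$ is forced. By construction of Algorithm \ref{rich}, the entries of $u$ and of $v$ of color $\le j$ are exactly the numbers $r(W_l)$, respectively $n-l(W_l)+1$, attached to the vector spaces $W_l\in M$ with $c_l\le j$; unwinding the multi-indices, the conditions defining $R(u,v)(M)=X_u(F_\bullet)\cap X_v(G_\bullet)$ force $\dim\bigl(V_j\cap F_{r(W_l)}\bigr)$ and $\dim\bigl(V_j\cap G_{n-l(W_l)+1}\bigr)$ to be as large as possible for each such $W_l$. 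Since $V_j\subseteq V_m=\Lambda$, $\dim V_j=k_j$, and $\Lambda\in U$ so all the relevant intersections of $\Lambda$ with coordinate subspaces have their minimal dimensions, these inequalities have no slack and pin $V_j$ down to the subspace of $\Lambda$ built in the proof of Lemma \ref{lemma-1}, which has dimension exactly $k_j$ for $\Lambda\in U$. Thus $\pi^{-1}(\Lambda)$ is a single point for $\Lambda$ in $U$; since we work in characteristic zero the generic fibre is moreover reduced, so $\pi$ is birational. It is precisely the minimality of the coloring produced by Algorithm \ref{rich} that makes these inequalities tight; for a non-minimal Richardson variety the fibre can be positive dimensional, as the example preceding this theorem shows.

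For (b): $R(u,v)(M)$ is normal (Richardson varieties are normal) and $X(M)$ is normal (Theorem \ref{main-1}), and $\pi$ is proper and birational, so by Zariski's Main Theorem $\pi$ restricts to an isomorphism over $X(M)\setminus B$, where $B=\{\Lambda\in X(M):\dim\pi^{-1}(\Lambda)>0\}$ is closed; hence the exceptional locus is contained in $\pi^{-1}(B)$, and it suffices to show $\dim\pi^{-1}(B)\le\dim R(u,v)(M)-2$. To this end I would describe the fibres: because every $V_j\subseteq\Lambda$, the fibre $\pi^{-1}(\Lambda)$ is the intersection variety inside $Fl(k_1,\dots,k_{m-1};\Lambda)$ cut out by the incidence conditions with the induced flags $F_\bullet\cap\Lambda$ and $G_\bullet\cap\Lambda$ in $\Lambda\cong\mathbb{C}^k$, so its dimension is governed by the intersection numbers $\dim(\Lambda\cap W)$ for coordinate subspaces $W$. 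These numbers take their generic values on $U$ (where the fibre is a point) and jump only along proper closed subsets, each of which is again a rank variety; Lemma \ref{dim} computes the codimension of such a locus, and the same combinatorics bounds the accompanying jump in fibre dimension. The estimate I would establish is that for every $d\ge1$ the locus $\{\Lambda\in X(M):\dim\pi^{-1}(\Lambda)\ge d\}$ has codimension at least $d+2$ in $X(M)$. (Codimension $\ge2$ of $B$ itself is immediate: a divisorial component $B'$ of $B$ would give $\dim\pi^{-1}(B')\ge\dim B'+1=\dim X(M)=\dim R(u,v)(M)$, contradicting $\pi^{-1}(B')\subsetneq R(u,v)(M)$.) Combining these bounds over all $d$ yields $\dim\pi^{-1}(B)\le\dim X(M)-2=\dim R(u,v)(M)-2$, which is exactly (b).

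The main obstacle is the sharp codimension estimate in (b): one needs that forcing $\dim\pi^{-1}(\Lambda)$ up by $d$ costs at least $d+2$ in codimension in $X(M)$, and verifying this requires using the description of fibres as intersection varieties, the dimension formula of Lemma \ref{dim}, and the coloring rule of Algorithm \ref{rich} all at once. The birationality in (a), by contrast, is essentially just the uniqueness half of the construction already carried out in the proof of Lemma \ref{lemma-1}.
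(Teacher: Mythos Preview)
Your outline follows the paper's structure closely. Part (a) is essentially identical: the paper defines the same open set $U\subseteq X(M)$ and writes down the inverse explicitly by setting $\Lambda_j=\operatorname{span}\{\Lambda\cap W_i : c_i\le j\}$, which is exactly the flag your descending induction reconstructs.

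For part (b), you have correctly identified both the reduction (via Zariski's Main Theorem and normality) and the target estimate, namely that the locus $B_d=\{\Lambda:\dim\pi^{-1}(\Lambda)\ge d\}$ has codimension at least $d+2$ in $X(M)$. Your parenthetical argument that $\operatorname{codim} B\ge 2$ is slick and correct, but as you note it is not enough: one really needs $\operatorname{codim} B\ge 3$ (the case $d=1$), since a codimension-$2$ component with one-dimensional generic fibre would give an exceptional divisor upstairs. The purely dimension-theoretic argument cannot be pushed past $\operatorname{codim} B_d\ge d+1$ without extra geometric input.

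The paper supplies that input concretely, and this is the step you have flagged but not carried out. For each pair $W_i$ of color $c$ and $W_j$ of color $c+1$ with $W_i\cap W_j\ne 0$, the paper constructs an explicit rank set $M'(W_i,W_j)$ (by replacing $W_j$ and the color-$(c+1)$ vector spaces containing $W_i\cap W_j$ with their overlaps, then normalizing) such that $X(M'(W_i,W_j))$ is a component of $B_1$ and the generic fibre over it is one-dimensional. The codimension is then read off from Lemma \ref{dim}. The crucial combinatorial point, and the place where the coloring rule of Algorithm \ref{rich} enters, is a case split: if $W_i\subset W_j$ then $\dim W_i\le\dim W_j-2$ because $l(W_i)\ne l(W_j)$ and $r(W_i)\ne r(W_j)$; if $W_i\not\subset W_j$ then the existence of an intermediate color-$(c+1)$ space $W_t$ with $W_i\subset W_t$ and $l(W_j)<l(W_t)<l(W_i)$ (guaranteed by minimality of the coloring) forces $\dim(W_i\cap W_j)\le\dim W_j-2$. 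Either way one gets $\dim X(M'(W_i,W_j))\le\dim X(M)-3$, hence codimension at least $3$ downstairs and at least $2$ upstairs. Higher-dimensional fibre loci arise by iterating this construction, which gives the $d+2$ bound you stated. So your plan is right; what remains is exactly this case analysis, and you should expect the extra unit of codimension to come from the rank-set axiom $l(W_i)\ne l(W_j)$, $r(W_i)\ne r(W_j)$ together with the minimality of the coloring.
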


\begin{proof}
Let $[\Lambda] \in X(M)$ be a $k$-dimensional subspace such that $$\dim(\Lambda \cap W_i) = \# \{ W_s \ | \ W_s \subseteq W_i\}$$ for every $i$ and $$\dim(\Lambda \cap W_i \cap W_j) = \# \{ W_s \in M \ | \  W_s \subset W_i \cap W_j \}$$ for every $i$ and $j$. The set of such $\Lambda$ form a dense, Zariski open subset $U$ of $X(M)$. To see that $U$ is not empty take a vector space $\Lambda$ spanned by vectors $\sum_{e_j \in W_i} \alpha_j^i e_j$, where the collection of coefficients $\alpha_j^i$ are algebraically independent. Then it is clear that $\Lambda$ is in $U$. The inverse of $\pi$ can be defined over $U$ as follows. Let $W_{i_s}$ be $1 \leq s \leq k_i - k_{i-1}$ be the vector spaces in $M$ that are assigned the color $i$.   Let $\Lambda_i $ be the span of the vector spaces $\Lambda \cap W_{i_s}$ with $ 1 \leq s \leq k_i - k_{i-1}$. Note that by construction $\Lambda_i$ is a subspace of $\Lambda$ of dimension $k_i$ containing $\Lambda_{i-1}$ and contained in $\Lambda_{i+1}$. It follows that the partial flag $(\Lambda_1, \dots, \Lambda_k = \Lambda)$ is the inverse image of $\Lambda$ under the projection map $\pi$. Hence $\pi$ is birational. 

We now bound the dimension of the exceptional locus. Note that the fiber dimension of $\pi$ is positive if and only if at least one of the vector spaces $V_j$ intersects $W_i^{c_i}$ with $c_i <j$ in a subspace of dimension greater than $\# \{W_s \in M \ | \ W_s \subseteq W_i\}$.  We can stratify the rank variety into loci where such intersections happen and compare the decrease in the dimension of the image of $\pi$ with the increase in the dimension of the fibers of $\pi$. In fact, by stratifying the rank variety successively, it suffices to carry out the calculation when $j = c+1$ and $c_i = c$.  Let $W_i$ and $W_j$ be two vector spaces with colors $c$ and $c+1$,  respectively, such that $W_i \cap W_j \not= \emptyset$. Define a new rank set $M'(W_i, W_j)$ as follows.

Step 1.  Let  $W_j' = W_j \cap W_i$.  List all the vector spaces $W_1, W_2, \dots, W_r$ of color $c+1$ in $M$ that contain $W_j'$ ordered so that $l(W_1) < l(W_2) < \cdots < l(W_r)$. Form a new {\em collection} of vector spaces $M'$ by replacing  $W_1, \dots, W_r$ in $M$ with $W_j'$ and the spans $\overline{W_1 W_2}, \overline{W_2, W_3}, \cdots, \overline{W_{r-1}W_r}$.  Note that $M'$ is not necessarily a rank set since two of the vector spaces may coincide or the least or largest index  basis elements in two of the vector spaces may coincide.

Step 2. Let $W-e_{r(W)}$ (respectively, $W-e_{l(W)}$) denote the vector space spanned by the set of all the basis elements in $W$ but $e_{r(W)}$ (respectively, $e_{l(W)}$). As long as there are two vector spaces $W_1 \subseteq W_2$ in $M'$ with $r(W_1) = r(W_2)$, replace $W_2$ in $M'$ with the vector space $W_2 - e_{r(W_2)}$ keeping its label the same and relabel the new collection of vector spaces $M'$. If there are no such vector spaces, as long as there are vector spaces $W_1 \subseteq W_2$ in $M'$ with $l(W_1) = l(W_2)$, replace $W_2$ in $M'$ with the vector space $W_2 - e_{l(W_2)}$ and relabel the new collection of vector spaces $M'$. The procedure terminates when $M'$ is a rank set or when one of the vector spaces consists only of the zero vector. In the former case, set $M'(W_i, W_j)= M'$. In the latter case, set $M'(W_i, W_j)= \emptyset$. We call this process the {\em normalization} of the set of vector spaces $M'$.

Observe that Step 2 leads to isomorphic subvarieties (see \cite{coskun:LR} for a discussion of the normalization algorithm). We include it in order to apply the dimension formula in Lemma \ref{dim} without modification. The fiber of $\pi$ over $M'(W_i, W_j)$ has dimension one. Note that the locus where $\pi$ has higher dimensional fibers can be obtained by repeated applications of Steps 1 and 2.  In order to estimate the dimension of the exceptional locus,  it suffices to compare the dimension of $X(M)$ to $X(M'(W_i, W_j))$. There are two cases to consider. If $W_i \subset W_j$, then $\dim(W_i) \leq \dim(W_j) -2$ since $W_i$ does not contain $l(W_j)$ and $r(W_j)$. Using the dimension formula given in  Lemma \ref{dim} and the fact that Step 2 can only decrease the value of the expression, we see that $\dim ( X(M'(W_i, W_j))) \leq \dim(X(M)) -r -2$. In particular, this dimension is at least three less. Hence, the exceptional locus has codimension at least two.

If $W_i \not\subset W_j$, then we may assume that $l(W_j) < l(W_i)$ and $r(W_j) < r(W_i)$. By the algorithm assigning colors, we know that there exists $W_t$ of color $c+1$ containing $W_i$ such that $ l(W_j) < l(W_t) < l(W_i)$. We conclude that $\dim(W_j \cap W_i) \leq \dim(W_j) -2$. By the dimension formula given in Lemma \ref{dim}, it follows that $\dim ( X(M'(W_i, W_j))) \leq \dim(X(M)) -r -2$. In particular, this dimension is at least three less. Hence, the exceptional locus has codimension at least two. 

This concludes the discussion that the exceptional locus has codimension at least two.
\end{proof}

The following corollary states a more precise version of Theorem \ref{main-2}.

\begin{corollary}\label{important}
Let $X(M)$ be a rank variety.  Let $\pi: R(u,v)(M) \rightarrow X(M)$ be
the projection from the minimal Richardson variety associated to $M$.
Then the singular locus of $X(M)$ is given by
$$X(M)^{sing} = \{ x \in X \ | \ \pi^{-1}(x) \in R(u,v)^{sing}\
\mbox{or} \ \dim(\pi^{-1}(x)) \geq 1 \}. $$ In particular, the
singular locus of $X(M)$ is a union of projection varieties.
\end{corollary}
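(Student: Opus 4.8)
The strategy is to combine the birationality statement from Theorem~\ref{t:rank.variety} with the cohomological control of the singularities coming from Theorem~\ref{main-1}. By Theorem~\ref{t:rank.variety}, the projection $\pi: R(u,v)(M) \to X(M)$ is birational and its exceptional locus has codimension at least two. By Theorem~\ref{main-1} (equivalently Theorem~\ref{Mori}), $X(M)$ is normal with rational singularities; in particular it is normal, so Zariski's Main Theorem applies. First I would show the inclusion $\supseteq$: if $\dim(\pi^{-1}(x)) \geq 1$, then $x$ cannot be a smooth point, because a birational projective morphism onto a normal variety that contracts a positive-dimensional fiber over $x$ forces $x$ to be singular (otherwise $\pi$ would be an isomorphism over a neighborhood of $x$ by Zariski's Main Theorem, contradicting the positive fiber dimension). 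And if $\pi^{-1}(x)$ is a single point $p$ lying in $R(u,v)^{sing}$, then since $\pi$ is an isomorphism from a neighborhood of $p$ (in $R(u,v)$) onto a neighborhood of $x$ — by Zariski's Main Theorem, as $\pi$ is birational, $X(M)$ normal, and the fiber is a reduced point — the local ring at $x$ is isomorphic to the local ring at $p$, hence $x$ is singular.

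**The reverse inclusion.** For $\subseteq$, suppose $x \in X(M)^{sing}$. If $\dim(\pi^{-1}(x)) \geq 1$ we are done, so assume $\pi^{-1}(x)$ is a single point $p$; I must show $p \in R(u,v)^{sing}$. Again by Zariski's Main Theorem, since $\pi$ is birational onto the normal variety $X(M)$ and $\pi^{-1}(x) = \{p\}$ is a reduced point, $\pi$ restricts to an isomorphism of an open neighborhood of $p$ onto an open neighborhood of $x$; therefore $\OO_{X(M),x} \cong \OO_{R(u,v),p}$, and singularity of $x$ forces singularity of $p$. The content here is purely the local isomorphism statement, which is standard once normality of $X(M)$ is in hand — and that is exactly what Theorem~\ref{main-1} provides. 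One subtlety worth spelling out: one needs the scheme-theoretic fiber $\pi^{-1}(x)$ to be a reduced point, not merely set-theoretically a point; this follows because $R(u,v)$ is reduced and $\pi$ is birational, so the generic behavior of fibers is trivial, and by upper semicontinuity plus normality of the target (connectedness of fibers from Stein factorization being trivial, as in Step~2 of the proof of Theorem~\ref{Mori}) the fibers that are zero-dimensional are reduced points.

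**The "union of projection varieties" clause.** For the final sentence, I would argue as follows. The locus where $\dim(\pi^{-1}(x)) \geq 1$ is closed and is, by the explicit stratification described in the proof of Theorem~\ref{t:rank.variety}, a finite union of the rank varieties $X(M'(W_i, W_j))$ (and their iterates under repeated application of Steps~1 and~2) — these are rank varieties, hence projection varieties by Theorem~\ref{projection=rank}. For the locus $\{x : \pi^{-1}(x) \in R(u,v)^{sing}\}$: by Corollary~\ref{richardson}, $R(u,v)^{sing}$ is a union of Richardson varieties $R(u',v')$ inside $F(k_1,\dots,k_m;n)$, and the image of a Richardson variety under $\pi$ is by definition a Grassmannian projection variety, hence a rank variety by Theorem~\ref{projection=rank}; one checks that $\pi(R(u',v')) \subseteq X(M)^{sing}$ precisely when $R(u',v')$ meets the locus over which $\pi$ has zero-dimensional fibers, and on that open locus $\pi$ is an isomorphism so the image is the relevant piece of $X(M)^{sing}$. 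Taking the union over all components of $R(u,v)^{sing}$ and all the contracted strata expresses $X(M)^{sing}$ as a finite union of projection varieties.

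**Main obstacle.** The one genuinely delicate point is the reducedness of the zero-dimensional fibers of $\pi$, which is what licenses the clean application of Zariski's Main Theorem in the form "isomorphism of local rings." Everything else is bookkeeping on top of Theorems~\ref{main-1}, \ref{t:rank.variety} and~\ref{projection=rank}. I would handle reducedness either by a direct argument using that $\pi$ is birational with $R(u,v)$ reduced (so $\pi_* \OO_{R(u,v)} = \OO_{X(M)}$, using $X(M)$ normal and the triviality of the Stein factorization exactly as in Step~2 of the proof of Theorem~\ref{Mori}), or by invoking that a birational morphism onto a normal variety with $\pi_*\OO = \OO$ is automatically an isomorphism over the complement of the image of the exceptional locus.
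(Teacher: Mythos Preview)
Your argument has a genuine gap in the step ``$\dim(\pi^{-1}(x)) \geq 1 \Rightarrow x \in X(M)^{sing}$.'' You write that this follows ``because a birational projective morphism onto a normal variety that contracts a positive-dimensional fiber over $x$ forces $x$ to be singular (otherwise $\pi$ would be an isomorphism over a neighborhood of $x$ by Zariski's Main Theorem).'' But Zariski's Main Theorem says no such thing: the blowup of a smooth variety at a smooth point is a birational projective morphism onto a normal (indeed smooth) target with a positive-dimensional fiber over a smooth point. Normality of the target gives you connectedness of fibers and an isomorphism over the locus of zero-dimensional fibers, nothing more. You invoke Theorem~\ref{t:rank.variety}'s codimension-two statement in the preamble but never actually use it, and it is precisely this hypothesis that is doing the work.

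The paper closes this gap with a separate lemma immediately preceding the corollary: if $f: X \to Y$ is birational between normal projective varieties and the exceptional locus $E$ has codimension at least two in $X$, then $f(E) \subset Y^{sing}$. The proof is not via Zariski's Main Theorem but via $\QQ$-factoriality: one pushes forward a very ample divisor $D$ on $X$ through $f$ to get a Weil divisor $f(D)$ on $Y$ (still a divisor because $\mathrm{codim}\, E \geq 2$), and shows that $f(D)$ cannot be $\QQ$-Cartier at any point of $f(E)$ by an intersection-number argument against a curve in a positive-dimensional fiber. Since smooth points are factorial, this forces $f(E) \subset Y^{sing}$. Once this is in hand, the rest of your argument---the local isomorphism over zero-dimensional fibers via $\pi_*\OO_{R(u,v)} = \OO_{X(M)}$---matches the paper exactly, and your ``main obstacle'' (reducedness of finite fibers) is not an obstacle at all; it is immediate from $\pi_*\OO = \OO$. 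The obstacle you missed is the one you thought was trivial.
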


\begin{remark}
More generally, the singular locus of any projection variety in an
arbitrary $G/P$ is a union of projection varieties.  However, it is
more complicated to determine the singular locus as above.  It is an
interesting open problem to find an explicit characterization in
general.
\end{remark}

The basic observation that allows us to characterize the singular loci of projection varieties is the following. 

\begin{lemma}
Let $f: X \rightarrow Y$ be a birational morphism of normal, projective varieties such that the exceptional locus $E$ of $f$ (i.e., the locus in $X$ where $f$ fails to be an isomorphism) has codimension at least two. Then $Y^{sing} = f(X^{sing} \cup E)$. 
\end{lemma}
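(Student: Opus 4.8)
The plan is to prove the two inclusions $Y^{sing} \subseteq f(X^{sing} \cup E)$ and $f(X^{sing}\cup E) \subseteq Y^{sing}$ separately, working with the open set $U = Y \setminus f(E)$ over which $f$ restricts to an isomorphism $f^{-1}(U) \to U$. First I would observe that since $f$ is proper and birational with $E$ of codimension $\geq 2$, the image $f(E)$ also has codimension $\geq 2$ in $Y$: indeed $f|_E \colon E \to f(E)$ cannot increase dimension, and a standard argument (or Zariski's main theorem together with normality of $X$) shows $f$ does not contract anything outside $E$, so $\dim f(E) \leq \dim E \leq \dim Y - 2$. Over $U$, the isomorphism identifies singular points with singular points, so $Y^{sing}\cap U = f(X^{sing} \cap f^{-1}(U)) \subseteq f(X^{sing})$.

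For the inclusion $f(X^{sing}\cup E)\subseteq Y^{sing}$: on the locus $f(E)$ we use that $Y$ is normal, hence $Y^{sing}$ has codimension $\geq 2$ as well — wait, that is the wrong direction. Instead I would argue as follows. Suppose $y\in f(E)$ but $y$ is a smooth point of $Y$. Since $Y$ is normal and smooth at $y$, $Y$ is locally factorial near $y$, and $f$ is a proper birational morphism to a locally factorial variety; by van der Waerden purity (the "ramification/contraction purity" theorem), the exceptional locus of $f$ over a neighborhood of $y$ must be pure of codimension one in $X$, or empty. If $f^{-1}(y)$ meets $E$, then $E$ has a component of codimension one mapping to (a neighborhood of) $y$, contradicting $\operatorname{codim} E \geq 2$. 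Hence $f^{-1}(y)\cap E = \emptyset$, which forces $y \notin f(E)$, a contradiction. Therefore $f(E)\subseteq Y^{sing}$. Separately, if $x\in X^{sing}$ and $f(x)=y\notin f(E)$, then $x\in f^{-1}(U)$ where $f$ is an isomorphism, so $y$ is singular on $Y$; thus $f(X^{sing})\subseteq Y^{sing}\cup f(E) = Y^{sing}$.

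Combining the two inclusions gives $Y^{sing} = f(X^{sing}\cup E)$, since $Y^{sing}\cap U \subseteq f(X^{sing})$ and $Y^{sing}\subseteq U \cup f(E)$ with $f(E)\subseteq Y^{sing}$, while conversely $f(X^{sing}\cup E) = f(X^{sing})\cup f(E)\subseteq Y^{sing}$.

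\textbf{Main obstacle.}
The delicate point is the claim that $f(E)\subseteq Y^{sing}$, i.e.\ that $f$ cannot be an isomorphism over a smooth point while still having a nonempty fiber inside $E$ there. The clean way to see this is the purity of the branch/exceptional locus for morphisms to a regular (or even locally factorial) scheme — this is where smoothness of $Y$ at $y$ is essential, and it is the reason the statement asks only for $Y$ normal globally but genuinely uses regularity locally at candidate smooth points. An alternative, perhaps more elementary route, is to use that a birational morphism between normal varieties which is an isomorphism in codimension one and whose source is the normalization forces any exceptional fiber over a point to lie in the non-normal or non-smooth locus of the target; but the purity argument is the cleanest and is what I would write up, citing a standard reference for purity of the exceptional locus.
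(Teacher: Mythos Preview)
Your proof is correct and follows the same overall architecture as the paper's: both reduce the statement to showing $f(E)\subseteq Y^{sing}$, using that $f$ is an isomorphism over $Y\setminus f(E)$ for the easy inclusions. The difference lies in how this key inclusion is established. You invoke purity of the exceptional locus over a locally factorial base as a black box: if $y$ were smooth, the exceptional locus near $y$ would be a divisor, contradicting $\operatorname{codim} E\geq 2$. The paper instead gives a direct intersection-theoretic argument showing that $Y$ fails to be $\QQ$-factorial at any $y\in f(E)$: one pushes forward a very ample divisor $D$ on $X$ to a Weil divisor $f(D)$ through $y$, and shows $f(D)$ cannot be $\QQ$-Cartier by pairing against a curve $C$ in the positive-dimensional fiber $f^{-1}(y)$ (the pullback of any line bundle from $Y$ has degree zero on $C$, while $D$ has positive degree).

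These are really two presentations of the same phenomenon---the paper's argument is essentially an in-line proof of the purity statement you cite. Your route is shorter and cleaner provided the reader accepts the purity theorem; the paper's route is more self-contained and yields the slightly sharper conclusion that $Y$ is not even $\QQ$-factorial along $f(E)$, not merely non-regular. One small comment: your opening remark that $\dim f(E)\leq \dim Y-2$ is true but never used, and the aside ``wait, that is the wrong direction'' should be edited out in a final write-up.
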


\begin{proof}
The map $f$ gives an isomorphism between $X - E$ and $Y- f(E)$. Hence, $ (Y-f(E))^{sing} = f((X-E)^{sing})$. Consequently, the content of the lemma is that $f(E) \subset Y^{sing}$. It is well-known that $Y$ is badly singular along $f(E)$. For example, $Y$ cannot even be $\QQ$-factorial along $f(E)$. To see this, note that by Zariski's Main Theorem, the fibers of $f$ over the points of $f(E) \subset Y$ are positive dimensional. Since the question is local on $Y$, by replacing $Y$ by a Zariski open neighborhood containing $y$, we may assume that $f(E)=y$.  Let $C$ be a curve in the fiber of $f$ over $y$.  Let $D$ be a divisor on $X$ associated to a section of a very ample line bundle $A$. Since the exceptional locus of $f$ has codimension at least 2, $f(D)$ is a Weil-divisor on $Y$ containing $y$. Suppose $f(D)$ were $\QQ$-Cartier at $y$. Then $m f(D)$ would be the class of a line bundle $M$ for some $m>0$. Let $L$ be an ample line bundle on $Y$ such that $L \otimes M^{-1}$ is ample.  Then $f^*(L \otimes M^{-1})$ is the pull-back of an ample line bundle by a birational map, hence it is NEF.  In particular, it has non-negative degree on the curve $C$. However, the degree of  $f^*L$ on $C$ is zero and the degree of $f^*(M^{-1}) = A^{\otimes -m}$ is negative. We thus get a contradiction.  We conclude that $f(D)$ cannot be $\QQ$-Cartier at $y$. This concludes the proof of the lemma. 
\end{proof}

\noindent
\textit{Proof of Corollary~\ref{important}.}  
Consider the map $\pi:R(u,v) \rightarrow X(M)$. Since $X(M)$ is normal
and the map is birational, by Zariski's Main Theorem, $\pi$ is an
isomorphism over the locus $$U= \{x \in X(M)\ |
\dim(\pi^{-1}(x))=0\}.$$ Since the exceptional locus of $\pi$ has
codimension at least two, by the previous lemma, $X(M)$ is singular
along $X(M) - U $. Since over $U$ the map $\pi$ is an isomorphism,
$x\in U$ is singular if and only if $\pi^{-1}(x)$ is singular in
$R(u,v)$. This concludes the proof of the Corollary.
\qed

\bigskip

Note that Corollary \ref{richardson} and Corollary \ref{important} explicitly determine the irreducible components of the singular locus of projection varieties.

\begin{example}
We give a simple example showing how to find the singular locus of a projection variety. 
Let $M$ be the rank set $$W_1=[e_1, e_6], W_2= [e_3,e_4], W_3=[e_5, e_{10}], W_4=[e_7, e_8] .$$ Let $M_1$ be the rank set
$$W_1^1 = [e_3], W_2^1 = [e_4], W_3^1=[e_5, e_{10}], W_4^1=[e_7, e_8] .$$ Let $M_2$ be the rank set $$W_1^2=[e_1, e_6], W_2^2= [e_3,e_4], W_3^2= [e_7], W_4^2= [e_8].$$ Then $X(M_1)$ and $X(M_2)$ are the loci over which $\pi$ has positive dimensional fibers. 
Let $M_3$ be the rank set $$W_1^3= [e_1, e_5], W_2^3= [e_3,e_4], W_3^3=[e_5, e_6], W_4^3=[e_7, e_8].$$ Let $M_4$ be the rank set $$W_1^4= [e_3, e_4], W_2^4=[e_5,e_6], W_3^4=[e_6, e_{10}], w_4^4=[e_7, e_8].$$ By Corollary \ref{richardson}, $X(M_3)$ and $X(M_4)$ are the loci where $\pi^{-1}$ is singular. We conclude that $X(M)$ is singular along $\bigcup_{i=1}^4 X(M_i)$. 
\end{example}

Finally, we obtain a generalization of Corollary \ref{segre}.

\begin{definition}
Let $j \leq k$ and let $m \leq n-k+j$. Let $V$ be an $n$-dimensional
vector space and let $T$ and $U$ be $m$ and $(k-j)$-dimensional
subspaces of $V$ such that $T \cap U = \{0\}$.  A {\em linearly
embedded sub-Grassmannian} $G(j,m)$ in $G(k,n)$ is the image of $\phi:
G(j, T) \hookrightarrow G(k,V)$ under the map $\phi: W \mapsto W
\oplus U$. A {\em Segre product of linearly embedded
sub-Grassmannians} is a product of linearly embedded sub-Grassmannians followed by the Segre embedding 
$$G(j_1, m_1) \times \cdots G(j_r, m_r) \hookrightarrow G(k_1, n_1) \times \cdots \times G(k_r, n_r) \hookrightarrow G(\sum k_i, n).$$
as described in Definition
\ref{definition-segre}.   
\end{definition}
\smallskip

\begin{remark}
A linearly embedded sub-Grassmannian $G(j,m) \subset G(k,n)$ is a
smooth Schubert variety with class $\sigma_{(n-k)^{k-j},
(n-m+k-j)^j}$.  In fact, every smooth Schubert variety in $G(k,n)$ is
a linearly embedded sub-Grassmannian.
\end{remark}

\begin{corollary}
Let $X$ be a rank variety with rank set $M$.  The following are equivalent.  
\begin{enumerate}
\item $X$ is smooth. 
\item $X$ is a Segre product of linearly embedded sub-Grassmannians. 
\item $M$ is a union of 1-dimensional subspaces and rank sets on
disjoint intervals which correspond with sub-Grassmannians after
quotienting out by the 1-dimensional subspaces.
\end{enumerate}

\end{corollary}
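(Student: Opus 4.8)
The plan is to prove the chain of equivalences in a cycle: first $(3)\Rightarrow(2)$, then $(2)\Rightarrow(1)$ (which is essentially immediate), and finally $(1)\Rightarrow(3)$ via the contrapositive using Corollary~\ref{important}. The structural insight is that a rank set $M$ naturally splits according to the connected components of the interval structure: say two basis elements $e_a, e_b$ are \emph{linked} if some $W_i\in M$ contains both, and let the equivalence classes of this relation partition $\{1,\dots,n\}$ into maximal intervals $J_1,\dots,J_r$ (after discarding gaps). Each $W_i$ lies entirely in one $J_\ell$, so $M$ decomposes as a disjoint union $M_1\sqcup\cdots\sqcup M_r$, and correspondingly $X(M)$ is the Segre product $X(M_1)\times\cdots\times X(M_r)$ in the sense of Definition~\ref{definition-segre}, as one checks directly from the basis-by-basis description of the rank variety and Lemma~\ref{dim}. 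Condition~(3) is precisely the statement that within each block $M_\ell$, after removing the one-dimensional $W_i$'s (which contribute the fixed direct summand $U$ in the definition of a linearly embedded sub-Grassmannian) and quotienting, the remaining rank set is the ``full'' rank set $\{[e_i,e_{n'-k'+i}]\}$ of some Grassmannian $G(j,m)$.

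For $(3)\Rightarrow(2)$: assuming~(3), each block $X(M_\ell)$ is, by the description above, a linearly embedded sub-Grassmannian $G(j_\ell, m_\ell)\subset G(k_\ell, n_\ell)$ (the one-dimensional spaces supply the fixed summand $U$, the quotient supplies $G(j_\ell,m_\ell)$, and the interval $J_\ell$ supplies the ambient $n_\ell = \#J_\ell$), so $X(M)$ is their Segre product, which is~(2). For $(2)\Rightarrow(1)$: a Segre product of linearly embedded sub-Grassmannians is visibly smooth, being the image of a product of honest Grassmannians under the Segre embedding composed with linear embeddings, each of which is a closed immersion; this is the same observation used in the proof of Corollary~\ref{segre}.

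The main work is $(1)\Rightarrow(3)$, which I would do by contraposition: assuming~(3) fails, I exhibit a singular point of $X(M)$. There are two ways~(3) can fail within some block $M_\ell$. First, after deleting the one-dimensional spaces and quotienting, the reduced rank set in $J_\ell$ might still fail to be the full Grassmannian rank set --- meaning there is a containment relation $W_i\subsetneq W_j$ among the surviving spaces, or the intervals $l(W_i)$ are not ``as spread out as possible.'' In the presence of a genuine containment $W_i\subsetneq W_j$ (equivalently, the colors assigned by Algorithm~\ref{rich} are not all equal after the reduction), the morphism $\pi\colon R(u,v)(M)\to X(M)$ from Theorem~\ref{t:rank.variety} has positive-dimensional fibers over a nonempty locus --- this is exactly the stratification analysis in the proof of Theorem~\ref{t:rank.variety}, where a pair $W_i\subsetneq W_j$ of distinct colors forces the exceptional locus to be nonempty --- so Corollary~\ref{important} gives a singular point. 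Second, even when all surviving colors are equal (so $\pi$ is an isomorphism and $X(M)=R(u,v)(M)$ is a Richardson variety in $G(k,n)$), if the block is not a single sub-Grassmannian then by Corollary~\ref{segre} the corresponding Richardson variety is not a Segre product of (ordinary) Grassmannians, hence is singular. The one point requiring care is bookkeeping: one must verify that the ``reduction'' operation --- deleting one-dimensional $W_i$'s and quotienting --- is compatible with both Algorithm~\ref{rich} (it doesn't create spurious color distinctions) and with Corollary~\ref{segre} (the quotiented rank set really is the rank set of a Richardson variety in the smaller Grassmannian, and Segre-product-ness is preserved). I expect this compatibility check, rather than any new geometric idea, to be the main obstacle; it is essentially the content of translating condition~(3) into the language of Algorithms~\ref{rich} and~\ref{rank} so that Corollaries~\ref{richardson}, \ref{segre}, and~\ref{important} can be applied blockwise.
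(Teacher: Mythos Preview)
Your proposal is correct and follows essentially the same route as the paper: both arguments pivot on Corollary~\ref{important} (positive-dimensional fibers of $\pi$ force singularities) together with Corollary~\ref{segre} (the all-color-one case), and both handle one-dimensional members of $M$ by quotienting them out. The paper organizes the $(1)\Rightarrow(2)$ direction as an induction on $|M|$, peeling off one color-one subspace $W$ at a time: if $\dim W=1$ it quotients by $W$ and recurses, and if $\dim W>1$ it explicitly constructs a nonempty locus $X(M')$ over which $\pi$ has positive-dimensional fibers (by replacing $W$ and a color-two $W'\supset W$ with $[e_{l(W)},e_{r(W)-1}]$ and $[e_{l(W)+1},e_{r(W)}]$ and normalizing). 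Your block decomposition followed by an all-at-once reduction is equivalent, and you correctly identify the compatibility bookkeeping as the residual work; just note that your citation of Theorem~\ref{t:rank.variety} for the \emph{nonemptiness} of the exceptional locus is slightly off---that theorem only bounds its codimension, whereas the explicit nonempty construction is what the paper supplies in the proof of the present corollary.
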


For example, $G(2,4)$ is the smooth rank variety with rank set
$\{[e_{1},e_{3}], [e_{2}, e_{4}] \}$.  In $G(7, 12)$, the
rank set
\[
M=\{[e_{1}, e_{5}], [e_{2},e_{6}], [e_{3},e_{7}], [e_{4}], [e_{8},e_{10}], [e_{9},e_{11}],[e_{12}] \}
\]
corresponds with a smooth rank variety isomorphic to the product of $G(3,6) \times G(2,4)$.

\begin{proof}
Let $X$ be a Segre product of linearly embedded sub-Grassmannians $$G(j_1, m_1) \times \cdots \times G(j_s, m_s) \hookrightarrow G(k_1, n_1) \times \cdots \times G(k_s, n_s) \hookrightarrow G(k,n).$$ Then $X$ is smooth. We need to show that we can realize $X$ as a rank variety. The Segre product of rank varieties $X(M_i)$ is a rank variety corresponding to the concatenation of the corresponding rank set. A Schubert variety in $G(k_i, n_i)$ is a rank variety since it is a Richardson variety in the Grassmannian. Since a linearly embedded sub-Grassmannian is a Schubert variety, we conclude that a Segre product of linearly embedded sub-Grassmannians is a smooth rank variety. 

Conversely, suppose that $X$ is a smooth rank variety. We show that $X$ has to be a Segre product of linearly embedded sub-Grassmannians. Consider the corresponding rank set $M$. If $M$ contains only one vector space, then $X(M)$ is projective space and the corollary holds. If the color of all the vector spaces in the rank set is one, then $X(M)$ is a Richardson variety in the Grassmannian and the corollary holds by Corollary \ref{segre}. Now we will do induction on the number of vector spaces defining $M$. We may assume that there are some vector spaces in $M$ assigned a color larger than one. Let $W$ be a subspace in the rank set that is assigned the color $1$. If $\dim(W)=1$, then we can replace every vector space $W_i$ in the rank set $M$ by $W_i/W$. We obtain a new rank set $M'$ for $G(k-1, n-1)$ with one fewer vector space. The map $f: X(M') \rightarrow X(M)$ sending $\Lambda \in X(M')$ to the span of $\Lambda$ and $ W$ is an isomorphism between $X(M')$ and $X(M)$. By induction, $X(M')$ is a Segre product of linearly embedded sub-Grassmannians. It follows that $X(M)$ is a Segre product of linearly embedded Grassmannians.  If $\dim(W)>1$, we show that $X(M)$ is singular. Take a vector space $W'$ of color two containing $W$. Define a new set of vector spaces $M'$ by replacing $W'$ and $W$ with the two vector spaces $[e_{l(W)}, e_{r(W)-1}], [e_{l(W)+1}, e_{r(W)}]$. If $M'$ is a rank set, stop. The fiber of $\pi$ over this locus is positive dimensional. Hence, $X(M)$ is singular. If $M'$ is not a rank set, normalize the set of vectors to obtain a rank set $M''$. Note that $M''$ is non-empty and the fiber of $\pi$ over $X(M'')$ is positive dimensional. Hence, $X(M)$ is singular. This concludes the proof. 

The equivalence of (2) and (3) follows from the fact that $G(k,n)$ is itself a
rank variety corresponding with $M=\{W_{1},\ldots , W_{k} \}$ where each $W_{i}=[e_{i},e_{n-k+i}]$.   

\end{proof}

There is a nice way to enumerate all the rank varieties in $G(k,n)$
using the Stirling numbers of the second kind.  In fact, if we
$q$-count the rank varieties according to dimension, we get a well
known $q$-analog of the Stirling numbers \cite{ER96,GR86,Milne78,WW91}. 

Define the generating function 
\[
g[k,n] = \sum_{M} q^{\dim(X(M))}
\]
where the sum is over all rank sets $M$ for $G(k,n)$.  Set $g[k,n]=0$ for
$k>n$, $g[0,n]=0$ for $n>0$ and $g[0,0]=1$.  Let $[k]=1+q+\dotsb +
q^{k}$.  

\begin{lemma}\label{l:stirling}
The polynomials $g[k,n]$ satisfy the recurrence
\[
g[k,n] = g[k,n-1]+ [n-k+1]\cdot g[k-1,n-1]
\]
for $1\leq k\leq n$.
\end{lemma}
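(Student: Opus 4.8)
The plan is to prove the recurrence
\[
g[k,n] = g[k,n-1] + [n-k+1]\cdot g[k-1,n-1]
\]
by partitioning the set of rank sets $M$ for $G(k,n)$ according to whether or not the basis element $e_n$ appears in any vector space of $M$. First I would observe that a rank set $M = \{W_1, \dots, W_k\}$ for $G(k,n)$ consists of $k$ spans of consecutive basis elements with distinct left endpoints $l(W_i)$ and distinct right endpoints $r(W_i)$. Each $W_i$ is determined by the pair $(l(W_i), r(W_i))$ with $l(W_i) \le r(W_i)$, and the dimension formula in Lemma~\ref{dim} gives $\dim(X(M)) = \sum_i \dim(W_i) - \sum_i \#\{W_j \subseteq W_i\}$.

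\textbf{Case 1: no $W_i$ contains $e_n$.} Then every $W_i$ is contained in $[e_1, e_{n-1}]$, so $M$ is literally a rank set for $G(k,n-1)$, and conversely every rank set for $G(k,n-1)$ is such an $M$. Since the vector spaces and all containment relations among them are unchanged, $\dim(X(M))$ computed in $G(k,n)$ equals $\dim(X(M))$ computed in $G(k,n-1)$. This bijection is dimension-preserving, so these $M$ contribute exactly $g[k,n-1]$ to $g[k,n]$.

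\textbf{Case 2: exactly one $W_i$, say the one with the largest right endpoint, has $r(W_i) = n$.} Call it $W$, so $W = [e_a, e_n]$ for some $a$ with $1 \le a \le n$; since $W$ has the unique largest right endpoint in $M$, no other $W_j$ contains $e_n$, hence every other $W_j \subseteq [e_1, e_{n-1}]$. The key claim is that removing $W$ from $M$ and forgetting the ambient dimension gives a rank set $M' = M \setminus \{W\}$ for $G(k-1, n-1)$, and conversely, given any rank set $M'$ for $G(k-1,n-1)$ and any choice of left endpoint $a$ for the new space $W = [e_a, e_n]$, the set $M' \cup \{W\}$ is a rank set for $G(k,n)$ \emph{provided} $a$ is not already used as the left endpoint of some space in $M'$. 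Here I must be careful: $M'$ has $k-1$ spaces with left endpoints among $\{1, \dots, n-1\}$, and $W$'s left endpoint $a$ must avoid those $k-1$ values and can be anything in $\{1, \dots, n\}$; since $n$ is automatically available (no space in $M'$ reaches $e_n$... actually a space in $M'$ could have left endpoint $n-1$ but not $n$ since $M'$ lives in $G(k-1,n-1)$), there are exactly $n - (k-1) = n-k+1$ valid choices for $a$. The right endpoint of $W$ is forced to be $n$, so $r(W) = n$ is distinct from all other right endpoints automatically. Hence the map $M \mapsto (M', a)$ is a bijection onto pairs consisting of a rank set $M'$ for $G(k-1,n-1)$ together with one of $n-k+1$ allowed values of $a$.

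It remains to track the dimension. Adding $W = [e_a, e_n]$ to $M'$ increases $\sum_i \dim(W_i)$ by $\dim(W) = n - a + 1$. For the correction term $\sum_i \#\{W_j \subseteq W_i\}$: since $W$ has the unique largest right endpoint, no $W_j \in M'$ is contained in $W$ unless... wait, $W_j \subseteq W = [e_a, e_n]$ iff $l(W_j) \ge a$ and $r(W_j) \le n$; the latter is automatic, so $W_j \subseteq W$ iff $l(W_j) \ge a$. Also $W$ is contained in no other $W_i$ since $W$ alone reaches $e_n$. So the correction term increases by $\#\{W_j \in M' : l(W_j) \ge a\} + 1$ (the $+1$ from $W \subseteq W$). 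Therefore
\[
\dim(X(M)) = \dim(X(M')) + (n - a + 1) - \#\{W_j \in M' : l(W_j) \ge a\} - 1.
\]
Now sum $q^{\dim(X(M))}$ over all $M$ in Case~2: fixing $M'$ and summing over the $n-k+1$ allowed values of $a$, I claim $\sum_a q^{(n-a) - \#\{W_j \in M' : l(W_j) \ge a\}} = [n-k+1] = 1 + q + \cdots + q^{n-k}$. This is the main computational point: as $a$ ranges over the $n-k+1$ values in $\{1, \dots, n\}$ not equal to any $l(W_j)$, the exponent $(n - a) - \#\{W_j : l(W_j) \ge a\}$ takes each value in $\{0, 1, \dots, n-k\}$ exactly once. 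The largest exponent occurs at $a = n$ (giving $0 - 0 = 0$... hmm, that's the smallest). Let me reconsider: at $a = n$ the exponent is $0$; at the smallest allowed $a$ the exponent is largest. As $a$ decreases through allowed values, $n - a$ increases by $1$ at each integer step, but $\#\{W_j : l(W_j) \ge a\}$ increases by $1$ precisely at each step where $a$ passes a value equal to some $l(W_j)$ — which are the \emph{forbidden} steps. So over the allowed values of $a$, the exponent increments by exactly $1$ each time, running through $0, 1, \dots, n-k$. Hence the sum is $[n-k+1]$. Multiplying by $g[k-1,n-1]$ (summing over $M'$) gives the Case~2 contribution $[n-k+1] \cdot g[k-1,n-1]$.

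Adding the two cases gives the recurrence. I would also check the boundary conventions: for $k = n$ the only rank set is $\{[e_1], \dots, [e_n]\}$ with $X(M)$ a point, so $g[n,n] = 1$, consistent with $g[n,n] = g[n,n-1] + [1]\cdot g[n-1,n-1] = 0 + 1 \cdot 1$.

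\textbf{Main obstacle.} The one step requiring genuine care is the dimension bookkeeping in Case~2 — specifically verifying that $\sum_a q^{(n-a) - \#\{W_j : l(W_j)\ge a\}}$ equals $[n-k+1]$ exactly, with each exponent hit once. The clean way to see it is the "allowed steps vs.\ forbidden steps" argument above: the quantity $(n-a) - \#\{W_j \in M' : l(W_j) \ge a\}$, viewed as a function of $a$ decreasing from $n$, stays constant exactly when $a-1$ is a forbidden value (an $l(W_j)$) and increases by $1$ exactly when $a-1$ is allowed. Since there are $n-k+1$ allowed values in total and the function starts at $0$, it takes values $0, 1, \dots, n-k$, each once. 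Everything else — the two bijections and the additivity of the dimension formula — is routine once Lemma~\ref{dim} is in hand.
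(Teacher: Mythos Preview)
Your proof is correct and follows essentially the same approach as the paper: partition rank sets by whether some $W_i$ has right endpoint $n$, remove that space to get a rank set for $G(k-1,n-1)$, and verify that the $n-k+1$ admissible left endpoints for the removed space contribute dimensions that sweep out $0,1,\dots,n-k$ exactly once. Your ``allowed steps vs.\ forbidden steps'' argument makes explicit the bijection that the paper states more tersely (and in fact the paper's printed proof appears to contain two minor typos, writing $r(W)$ where $l(W)$ is meant and $\dim(X(M))$ where $\dim(X(M'))$ is meant, both of which you handle correctly).
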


\begin{proof}
Every rank set $M$ in $G(k,n)$ is either a rank set in $G(k,n-1)$ or
it contains a subspace of the form $[e_{i}, e_{n}]$.  In the
latter case removing this subspace leaves a rank set $M'$ in
$G[k-1,n-1]$ which does not include a subspace whose left endpoint is
$i$.  Observe that $\dim(X(M))- \dim(X(M'))$ equals $n-i$ minus the
number of subspaces in $M'$ with left endpoint larger than $i$.
Furthermore, for each $0\leq d \leq n-k$, we can add a subspace with
right endpoint $n$ to $M'$ to get a rank set in $G(k,n)$ of dimension
$d+\dim(X(M))$ by choosing the left endpoint to be the $d$-th largest
value in $\{1,2,\dotsc , n \} - \{r(W): W \in M' \}$.
\end{proof}

Recall that the Stirling numbers of the second kind $S(n,k)$ count the
number of set partitions of $\{1,\dotsc , n \}$ into $k$ nonempty
blocks.  Let $S[n,k]$ be the $q$-analog of $S(n,k)$ defined by the
recurrence
\[
S[n,k] = q^{k-1}  S[n-1, k-1 ]  + [k] S[n-1,k]
\] 
with initial conditions $S[0,0]=1, S[n,0] =0$ for $n>0$, and
$S[n,k]=0$ for $k>n$. One can show that $S[n,k]$ is divisible by
$q^{\chs{k}{2}}$.  Then, simple algebraic manipulations prove the
following corollary to Lemma~\ref{l:stirling}.

\begin{corollary}\label{c:stirling}
For $1\leq k\leq n$, we have $g[k,n] = S[n+1,n-k+1] \cdot q^{-\chs{n-k+1}{2}}$.  
\end{corollary}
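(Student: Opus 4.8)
The plan is to derive Corollary~\ref{c:stirling} purely formally from the recurrence in Lemma~\ref{l:stirling} together with the defining recurrence for the $q$-Stirling numbers $S[n,k]$, by matching up initial conditions and recursions after the substitution. First I would set $h[k,n] := S[n+1, n-k+1] \cdot q^{-\chs{n-k+1}{2}}$ and check that $h[k,n]$ satisfies exactly the same recurrence and boundary conditions as $g[k,n]$; since both sequences are uniquely determined by those data, they must coincide. This reduces everything to two bookkeeping verifications, both of which are routine but need the indices handled carefully.

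For the boundary conditions: when $k > n$ we have $n - k + 1 \leq 0$, so $S[n+1, n-k+1] = 0$ (using $S[m,0]=0$ for $m>0$ and $S[m,j]=0$ for $j<0$), giving $h[k,n]=0=g[k,n]$; for $k=0$, $n>0$ we get $S[n+1,n+1]\cdot q^{-\chs{n+1}{2}}$, and one checks from the $S$-recurrence that $S[m,m] = q^{\chs{m}{2}}$, so this is $q^{\chs{n+1}{2}}\cdot q^{-\chs{n+1}{2}} = 1$; but we want $g[0,n]=0$ for $n>0$, so I need to be slightly more careful here --- in fact one should note that the natural range of validity is $1\le k\le n$ and that the correction term $q^{-\chs{n-k+1}{2}}$ together with the stated initial data of $g$ forces us to verify the identity only on that range, anchoring the induction at, say, $n=k$ where $g[k,k]=1$ (the single rank set $\{[e_1],\dots,[e_k]\}$, dimension $0$) matches $S[k+1,1]\cdot q^{0} = 1$ since $S[m,1]=1$ for all $m\ge 1$.

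For the recurrence: substitute $h$ into the right-hand side of the Lemma~\ref{l:stirling} recurrence. We have
\[
h[k,n-1] + [n-k+1]\cdot h[k-1,n-1] = S[n, n-k+1]\,q^{-\chs{n-k}{2}} + [n-k+1]\, S[n, n-k+2]\, q^{-\chs{n-k+1}{2}}.
\]
Now apply the $S$-recurrence $S[n+1, n-k+1] = q^{(n-k+1)-1} S[n, n-k] + [n-k+1]\, S[n, n-k+1]$ --- wait, I need to re-index so the blocks are $n-k+1$; writing $m = n-k+1$ the relevant instance is $S[n+1, m] = q^{m-1} S[n, m-1] + [m] S[n, m]$. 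Comparing exponents of the correcting powers of $q$, one checks $\chs{m}{2} - \chs{m-1}{2} = m-1$ so the $q^{m-1}$ prefactor is exactly absorbed when one factors $q^{-\chs{m}{2}}$ out of $h[k,n]$, and $[m] = [n-k+1]$ is the coefficient appearing verbatim in Lemma~\ref{l:stirling}; this forces the match term-by-term provided the second index of $S$ in the two terms of the $g$-recurrence lines up with $m-1$ and $m$ respectively, which it does since $h[k,n-1]$ carries second index $(n-1)-k+1 = m-1$ and $h[k-1,n-1]$ carries $(n-1)-(k-1)+1 = m$.

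The only genuine obstacle, and it is minor, is keeping the two correction exponents $\chs{n-k}{2}$ versus $\chs{n-k+1}{2}$ straight and confirming the identity $\chs{m}{2}-\chs{m-1}{2}=m-1$ is what reconciles the $q$-power discrepancy between the two recurrences; everything else is substitution and an induction on $n$ with base case $n=k$. I would also remark, as the statement of Lemma~\ref{l:stirling} already suggests, that the apparent clash with the listed initial data $g[0,0]=1$, $g[0,n]=0$ is harmless because the closed form is only asserted for $1\le k\le n$, and the divisibility of $S[n,k]$ by $q^{\chs{k}{2}}$ (noted just before the corollary) guarantees $h[k,n]$ is an honest polynomial.
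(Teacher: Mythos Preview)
Your approach is exactly what the paper intends by ``simple algebraic manipulations'': set $h[k,n]=S[n+1,n-k+1]\,q^{-\chs{n-k+1}{2}}$, check it satisfies the recurrence of Lemma~\ref{l:stirling} via the defining recurrence for $S$, and match boundary data. The key identity $\chs{m}{2}-\chs{m-1}{2}=m-1$ absorbing the $q^{m-1}$ prefactor is precisely the point, and you have it.

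Two small things to tidy up. First, in your displayed expansion the second indices of $S$ are each off by one: $h[k,n-1]=S[n,\,n-k]\,q^{-\chs{n-k}{2}}$ and $h[k-1,n-1]=S[n,\,n-k+1]\,q^{-\chs{n-k+1}{2}}$, consistent with your later (correct) identification of these indices as $m-1$ and $m$. Second, your dismissal of the $k=0$ clash as ``harmless'' is not quite right: the recurrence at $k=1$ reads $g[1,n]=g[1,n-1]+[n]\,g[0,n-1]$, so it \emph{does} invoke $g[0,n-1]$, and with the paper's stated convention $g[0,n-1]=0$ the induction would break there (indeed $g[1,2]=2+q$, not $1$). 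The easy fix is to observe that the combinatorial argument proving Lemma~\ref{l:stirling} actually produces $g[1,n]=g[1,n-1]+[n]\cdot 1$ (removing the lone subspace leaves the empty rank set, counted once), which matches $h[1,n]=h[1,n-1]+[n]\,h[0,n-1]=h[1,n-1]+[n]$ since $h[0,n-1]=S[n,n]\,q^{-\chs{n}{2}}=1$; alternatively, just verify $g[1,n]=h[1,n]$ directly as a separate base strand of the induction.
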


Below are the 25 rank sets for $G(2,4)$ listed by dimension.  Thus
$g[2,4] = 6 + 8q + 7q^{2} +3q^{3}+ q^{4})$.  Here $(34,123)$ means the
rank set consisting of two subspaces spanned by $<e_{3},e_{4}>$ and
$<e_{1},e_{2},e_{3}>$.

\[
\begin{array}{rl}
dim &  rank sets\\
0: &  (2,1) , (3,1) , (4,1) , (3,2) , (4,2) , (4,3)\\
1: &  (2 3,1) , (3 4,1) , (3,1 2) , (4,1 2) , (2,1 2 3) , (3 4,2), (4,2 3) , (3,2 3 4)\\
2: &  (2 3 4,1) , (2 3,1 2) , (3 4,1 2) , (4,1 2 3) , (2,1 2 3 4), (3,1 2 3 4) , (3 4,2 3)\\
3: &  (2 3 4,1 2) , (3 4,1 2 3) , (2 3,1 2 3 4)\\
4: &  (2 3 4,1 2 3)
\end{array}
\]


\def\cprime{$'$}

\end{document}